\theoremstyle{plain}
\theoremstyle{definition}\newtheorem{theorem}{Theorem}[section]
\theoremstyle{plain}\newtheorem{lemma}[theorem]{Lemma}
\theoremstyle{plain}
\theoremstyle{plain}\newtheorem{proposition}[theorem]{Proposition}
\theoremstyle{remark}
\newcommand{\be}{\begin{equation}}
\newcommand{\ee}{\end{equation}}
 \newcommand{\ba}{\begin{aligned}}
 \newcommand{\ea}{\end{aligned}}
  \newcommand{\ben}{\begin{enumerate}}
   \newcommand{\een}{\end{enumerate}}
\newcommand{\Rmnum}[1]{\expandafter\@slowromancap\romannumeral #1@}
\begin{document}
%%%%%%%%%%%%%%%%%%%%%%%%%%%%%%%%%%%%%%%%%%%%%%%%%%%%%%%%%%%%%%%%%%%%%%%%%%%%%%%%%%%%%%%%%%%%%%%%%%%%
\title{Stability of a superposition of shock waves with contact discontinuities for the Jin-Xin relaxation system}

\author{Hualin Zheng\\
\\
        \small{Mathematical Sciences Center, Tsinghua University,Beijing, 100084, P. R. China.}\\
        \small{Email:~~zhenghl12@mails.tsinghua.edu.cn}}

\date{}
\maketitle
\begin{abstract}
   In this paper, we consider the large time asymptotic nonlinear stability of a superposition
of shock waves with contact discontinuities for the one dimensional Jin-Xin relaxation system
with small initial perturbations, provided that the strengths of waves are
small with the same order. The results are obtained by elementary weighted energy estimates
based on the underlying wave structure and an estimate on the heat equation.
\end{abstract}

{\bf Keywords:} Jin-Xin Relaxation System, Shock Wave, Contact Discontinuity,
Superposition, Large Time Stability
%%%%%%%%%%%%%%%%%%%%%%%%%%%%%%%%%%%%%%%%%%%%%%%%%%%%%%%%%%%%%%%%%%%%%%%%%%%%%%%%%%%%%%%%%%%%%%%%%%
\section{Introduction}
\quad ~
Consider a semilinear hyperbolic relaxation model
\begin{equation}\label{lp1.1-1}
\begin{split}
 u_t + v_x = 0   \ \ {\rm and} \ \   v_t + a^2 u_x = \frac{1}{\varepsilon}\big[f(u) - v\big], \  \  x\in\mathbb{R},  \ \  t>0, \\
\end{split}
\end{equation}
where $u,v\in \mathbb{R}^n$ are unknown functions,  $f(u)\in \mathbb{R}^n$ is a given smooth function,
$\varepsilon>0$ is a small constant representing the rate of relaxation, $a>0$ is a given constant.
This model was introduced in \cite{jx} for numerical purposes and named as the Jin-Xin relaxation
system from the names of the authors. For a general introduction and survey to relaxation schemes, see \cite{natalini,w}.
As $\varepsilon$ tends to zero, the Jin-Xin relaxation system is expected to approach its equilibrium system:
\begin{equation}\label{lp1.1-4}
v=f(u) \ \  {\rm and } \ \ u_t + f(u)_x =0,
\quad   x\in \mathbb{R}, ~ t>0.\\
\end{equation}
Indeed, the passage from  \eqref{lp1.1-1} to \eqref{lp1.1-4} has been intensely studied, see
\cite{bianchini,cll,serre,xuwenqing}
for instance. The purpose of this work is to study the large time asymptotic behavior toward a superposition of relaxation shock waves and contact waves of solutions to the Cauchy problem of \eqref{lp1.1-1} with the  initial data
\begin{equation}\label{lp1.1-3}
(u,v)(x,t=0)=(u_0,v_0)(x), \ \ x\in \mathbb{R}; \ \  \lim_{x\rightarrow \pm \infty}(u_0, v_0)(x) = \big(u_\pm, f(u_\pm) \big).
\end{equation}

It is well known that solutions to the equilibrium system \eqref{lp1.1-4}
contain three basic wave patterns: shock waves, rarefaction waves
and contact discontinuities. These dilation invariant solutions and
their linear superpositions in the increasing order of
characteristic speed, called Riemann solutions, govern both the
local and large time asymptotic behavior of general solutions to
system \eqref{lp1.1-4} (\emph{cf.}\cite{L2}). Since the equilibrium  system
is an idealization when the relaxation effects are
neglected, thus it is of great importance to study the large time
asymptotic behavior of the solutions to the relaxation system \eqref{lp1.1-1} toward the relaxation versions of these basic waves: relaxation shock waves, relaxation rarefaction waves and relaxation contact
waves. In the study of the large time asymptotic
stability toward these relaxation waves, important progress have been made by various researchers in one or multiple space dimensions for various types of relaxation systems, mainly on  a single type of relaxation waves; one may refer to
\cite{hl,hp3,lhl1,lhl2,lt,l,ltxzp,mn,mz,my,ltlhl,zengyn,zhhj,zhcj,wangweicheng1,ywa,liu-yang-yu-zhao,yu}
and \cite{hpw,huang-xin-yang} and references therein for nonlinear relaxation waves (i.e., shock waves and rarefaction waves) and linear relaxation waves (i.e., contact waves), respectively. Among these works, the stability of a single relaxation shock wave and of a single relaxation contact wave for the Jin-Xin model were shown in \cite{lhl1} and \cite{hpw}, separately. However, it is worthy to point out that even though the stability of each wave pattern for relaxation models is now well understood, the stability of a wave pattern to Riemann solutions is still not known. In the present work, we will pursue this issue in the case that the Riemann solution contains shock waves and contact discontinuities.

Notice that system \eqref{lp1.1-1} can be rewritten, by differentiating the second equation in \eqref{lp1.1-1} with respect to $x$ and using the
first, as
\begin{equation}\label{eqvjx}
u_t + f(u)_x = a^2 \varepsilon u_{xx} - \varepsilon u_{tt},
\quad   x\in \mathbb{R}, ~ t>0.\\
\end{equation}
It was proved in \cite{hpw} that $u_{tt}$ in the equation above could be viewed as a higher-order perturbation term in the study  of the large time asymptotic
stability toward a single relaxation contact wave. Motivated by this, we may treat equation \eqref{eqvjx} as a perturbation of viscous hyperbolic conservation laws:
\begin{equation}\label{viscous}
u_t + f(u)_x = a^2 \varepsilon u_{xx},
 \ \   x\in \mathbb{R}, ~ t>0.
\end{equation}
To the best of our knowledge, there exist only two works discussing the stability of Riemann solutions consisting of different types of elementary waves for viscous conservation laws and their variations. One is for the compressible Navier-Stokes equation when the Riemann solution is a superposition of rarefaction waves and a contact discontinuity
(\emph{cf.} \cite{huang-li-matsumura}). The other is for viscous hyperbolic conservation laws with the Riemann solution containing shock waves and contact discontinuities
(\emph{cf.} \cite{z}). In the spirit of the latter work, we will show that the superposition of relaxation shock waves with contact waves is asymptotically stable for solutions to the initial value problem \eqref{lp1.1-1} and \eqref{lp1.1-3} with small initial perturbations, under the subcharacteristic condition \eqref{scc}. It should be pointed out the following structural condition:
$$\nabla l_p \cdot r_p = 0 \ \ {\rm if} \ \ p\textrm{-th characteristic field is linearly degenerate}, $$
required in \cite{hpw} to show the stability of a single contact wave, can be removed in the present work by choosing a slightly different variable to perform the higher-order estimate.

We comment on some of the main difficulties and techniques involved in our analysis. Since the initial perturbation is generic and there are $n$ different characteristic fields, wave interactions do occur; different wave patterns possess different properties in terms of monotonicity and decay rate, such that the $L^2$-estimate of the integrated error equation is complete for a single relaxation shock wave (\emph{cf.}\cite{lhl1}), but not for a single relaxation contact wave (\emph{cf.}\cite{hpw}); comparing with viscous conservation laws \eqref{viscous}, which is a parabolic system, \eqref{eqvjx} is a different type of equation, wave equation, new features occurs.
To overcome these difficulties and then obtain a complete $L^2$-estimate of the integrated error equations, we first diagonalize the integrated error equation and carry out some weighted energy estimates as in \cite{z}, based on the idea that the $i$-th relaxation waves dominates the $i$-th characteristic zone as time is large and the wave interactions decay exponentially fast with respect to the spatial and temporal variables for large time. Then we get the desired $L^2$-estimate \eqref{loemissp} except a term due to the constant characteristic speed along the relaxation contact wave. To deal with this term, we use some properties of heat kernel (Lemma \ref{gh}) and the structure of system \eqref{eqvjx} by verifying that the corresponding $u_{tt}$ term in the diagonalized equations for the linearly degenerate characteristic family is a higher-order term.
This verification is highly nontrivial since a wave equation could be viewed as a parabolic system under this verification. In this paper, we can verify it when the equilibrium system admits only one linearly degenerate characteristic family.

The rest of this paper is organized as follows. In Section 2, some notations, assumptions, and the main result are stated. Section 3 is devoted to collecting some lemmas concerning the properties of heat kernel, relaxation shock waves and contact waves. The main result, Theorem \ref{mainthm}, is proven in section 4.

\section{Mathematical setting and main results}
\quad ~
Before stating the main result, we introduce some preliminary notations and give some background materials. In this paper, the conservation law
\begin{equation}\label{newlp1.1-4}
u_t + f(u)_x =0,
\quad   x\in \mathbb{R}, ~ t>0,\\
\end{equation}
is assumed to be {\it strictly hyperbolic}, that is, the Jacobian $  f'(u)$ has $n$ distinct real eigenvalues:
\begin{equation*}\label{lp1.1-5}
\lambda_1(u) < \lambda_2(u)< \cdots <\lambda_n(u).
\end{equation*}
Denote left and right eigenvectors of the matrix $ f'(u)$ corresponding to the eigenvalue $\lambda_i$ $(i=1,\cdots,n)$ by $l_i(u)$ and $r_i(u)$, respectively. Define the matrices $L(u)$, $R(u)$ and  $\Lambda (u)$ by
\begin{equation*}
L := (l_1 ,  \cdots, l_n )^T,  \ \
R  := (r_1 ,  \cdots, r_n ), \ \  \Lambda  : = \mbox{diag} (\lambda_1,\cdots, \lambda_n ),
\end{equation*}
where and thereafter $(\cdots)^T$ denotes the transpose. Then, it holds that \begin{equation*}\label{lp1.1-6}
L(u)   f'(u) R(u) =\Lambda(u)  \ \  {\rm and} \ \  L(u) R(u) = \mathbb{I} : = \mbox{identity matrix}.
\end{equation*}
The $i$-th characteristic field
is called genuinely nonlinear (or linearly degenerate), if $\nabla
\lambda_i(u)\cdot r_i (u)\neq 0$ (or $\nabla \lambda_i(u)\cdot r_i
(u)= 0$) for all $u$ under consideration. An $i$-shock wave (or $i$-contact
discontinuity), denoted by a triple $(u_l,u_r,s)$, is a piecewise
constant weak solution to \eqref{newlp1.1-4} (\emph{cf.}\cite{lax,smoller}), such that
\begin{equation*}\left\{\begin{array}{l}
f(u_l)-f(u_r)=s(u_l-u_r),\\
\lambda_i (u_r)<s<\lambda_i(u_l) \quad(\textrm{or}\;
s=\lambda_i(u_r)=\lambda_i(u_l)).
\end{array} \right. \end{equation*}
Suppose that the Riemann solution to \eqref{newlp1.1-4} with the following initial data
\begin{equation}\label{lp1.1-10}
u(x,0)=\left\{\begin{array}{l}u_{-}\,,\qquad x<0,
\\u_{+}\,, \qquad x>0,\end{array}\right.
\end{equation}
consists of $(n+1)$ constant states, denoted by
$u_1=u_-,\;u_2,\;\cdots,\;u_{n},\;u_{n+1}=u_+$, separated by a contact discontinuity   and  $n-1$ shock waves with the wave speeds $s_1,\cdots, s_n$. Let $p\in[1,n]$ and $(u_p, u_{p+1}, s_p)$ be the contact discontinuity.
Denote the wave strengths by
$$\delta_i\equiv|u_{i+1}-u_i|, \quad i=1,\cdots,n;\quad \delta\equiv\underset{i=1,\cdots,n}\min \{\delta_i\}.$$
When $|u_+-u_-|$ is small, there exists a positive constant $C_1$,
depending only on the flux function  and the far field data, such that
$$\delta_1+\delta_2+ \cdots+ \delta_n \leqslant C_1|u_+-u_-|.$$
The strengths of these waves are said to be \lq\lq small with the
same order" if
\begin{equation}\label{strength}
\begin{split}
\delta_1+\delta_2+ \cdots+ \delta_n \leqslant C_2\delta \quad \textrm{as}
\;\;\delta_1+\delta_2+ \cdots+ \delta_n\to 0,
\end{split}
\end{equation}
for some positive constant $C_2$.

Without loss of generality, we assume $\varepsilon =1$ and the speed of the contact discontinuity, $s_p=0$. To ensure the dissipative nature of system \eqref{lp1.1-1}, we impose the sub-characteristic condition \cite{cll,jx,l,w} as follows
\begin{equation}\label{scc}
|\lambda_i(u)|< a, \  \ i=1,\cdots,n,
\end{equation}
for all $u$ under consideration. Next, we construct the $i$-th relaxation shock wave (or $p$-th contact wave)
for \eqref{lp1.1-1} toward the given $i$-shock wave (or $p$-contact
discontinuity) for \eqref{newlp1.1-4}.
When $i\neq p$, the $i$-th relaxation
shock wave $u^i(x,t)$ is defined as
$$u^i(x,t)\equiv\varphi^i(\xi_i)=\varphi^i(x-s_it).$$
Here $\varphi^i(\xi_i)$ is the smooth traveling solution of
\eqref{lp1.1-1}, satisfying
\begin{equation}\label{shequ}
\left\{\begin{array}{l}
\big[f'(\varphi^i)-s_i\mathbb{I}\big](\varphi^i)'=\big(a^2-s_i^2\big)(\varphi^i)'',\\
\varphi^i(-\infty)=u_i,\ \ \varphi^i(+\infty)=u_{i+1},
\end{array} \right.\end{equation}
where $'=d/(d\xi_i)$. While for $i=p$, set
\begin{equation*}\label{}
C_p(u_p)=\left\{\;u\;\Big|\;u=u(\rho) ,\; \frac{du}{d\rho}=r_p\big(u(\rho)\big)
,\;u(\rho=\rho_-)=u_p\;\right\}.
\end{equation*}
Thus the $p$-th contact wave curve through $u_p$, $C_p(u_p)$, is the integral curve associated with the vector field $r_p(u)$ in the state space with the nonsingular parameter $\rho$. The parameter
$\rho$ is chosen to satisfy
\begin{eqnarray}\label{rho}
\begin{cases}
u(\rho_-)=u_p,\; u(\rho _+)=u_{p+1}\;, \;\rho_- <\rho _+;\\
\begin{cases}
\rho _t = a^2 \rho _{xx},\quad x\in \mathbb{R} ,\,t>-1,\\
\rho (x,t=-1)=
\begin{cases}
\rho _-, &x<0,\\
\rho _+, &x>0.
\end{cases}
\end{cases}
\end{cases}
\end{eqnarray}
Now, we define the $p$-th relaxation contact wave $u^p(x,t)$ as
$$u^p(x,t)\equiv u\big(\rho(x,t)\big)\in C_p(u_p).$$
Then it holds that,
$$u^p_t+f(u^p)_x-a^2 u^p_{xx}=- a^2 \nabla r_p\big(u(\rho)\big)\cdot r_p\big(u(\rho)\big)\rho_x^2=0,$$
provided that we impose the following structural condition:
\begin{equation}\label{sc}
\nabla r_p(u)\cdot r_p(u)\equiv 0,   \;\textrm{for} \;\; u\in
C_p(u_p).
\end{equation}
This structural condition was proposed first in \cite{lx} and used in \cite{hpw} to study the stability of contact waves. In addition to \eqref{sc}, we should note that in \cite{hpw} another structural condition:
\begin{equation*}\label{scleft}
\nabla l_p(u)\cdot r_p(u)\equiv 0,   \;\textrm{for} \;\; u\in
C_p(u_p),
\end{equation*}
was also imposed to show the stability of a single contact wave for the Jin-Xin model \eqref{lp1.1-1}.

 In order to study the asymptotic stability of superpositions of
shock waves and contact waves, we set the relaxation approximation to
the Riemann solution as the linear superposition of the
above two kinds of relaxation waves,
\begin{equation}\label{ubar}
\bar{u}(x,t)\equiv \sum\limits_{i=1}^n u^i(x,t)-(u_2+u_3+\cdots+u_n).
\end{equation}
Since for weak waves, $0<\delta_i\ll 1 \;(i=1,\cdots,n)$, the
vectors $u_2-u_1,\;u_3-u_2,\;\cdots,\;u_{n+1}-u_n$ form a basis of
$\mathbb{R}^n$, thus the initial mass can be decomposed into
\begin{equation}\label{uubarinitial}
\int_\mathbb{R}\big[u_0(x)-\bar{u}(x,0)\big]dx
=-\sum_{i=1}^n x_i(u_{i+1}-u_i),
\end{equation}
with the uniquely determined
constants $x_i\;(i=1,\cdots,n)$. The ansatz $u^a(x,t)$
is defined as
\begin{equation}\label{ua}
u^a(x,t)\equiv \sum\limits^n_{i=1} u^i(x-x_i,t)-(u_2+\cdots +u_n).
\end{equation}
It is easy to verify that $u^a$ satisfies
\begin{equation*}
\begin{split}
\int_\mathbb{R}\big[u_0(x)-u^a(x,0)\big]dx=&\int_\mathbb{R}\Big\{\big[u_0(x)-\bar{u}(x,0)\big]+
\big[\bar{u}(x,0)-u^a(x,0)\big]\Big\}dx \\=&-\sum_{i=1}^n
x_i(u_{i+1}-u_i)+\sum_{i=1}^n x_i(u_{i+1}-u_i)=0;
\end{split}\end{equation*}
and the following equation
\begin{equation}\label{uaequ}
u_t^a + f(u^a)_x - a^2 u_{xx}^a + u_{tt}^a
= (E_1 + E_2)_x,
\end{equation}
with error terms
\begin{equation}\label{E1E2}
 E_1 = f\big(u^a(x,t)\big) - \Bigg[\sum_{i = 1}^n f\big(u^i(x-x_i,t)\big) - \sum_{i=2}^n f(u_i)\Bigg] \ \  {\rm and} \ \
 E_2 = a^2 u^p_{xt}(x-x_i,t).
\end{equation}
Denote
\begin{equation*}
H(t) := \int_\mathbb{R} \big[u(x,t) - u^a(x,t)\big] dx \ \ {\rm with } \ \ H(0)=0.
\end{equation*}
It follows from \eqref{eqvjx} and \eqref{uaequ}that
\begin{equation*}
 H'(t) +  H''(t) = 0,
\end{equation*}
which implies
\begin{align*}
e^t   H'(t) = & \int_\mathbb{R} \big[u_t(x,0) - u^a_t(x,0)\big] dx
 = \int_\mathbb{R}\Big[-v_x(x,0) - \sum_{i=1}^n u^i_t(x-x_i,0)\Big] dx\\
 = & -(v_+ - v_-) + \big[f(u_+) - f(u_-)\big]
 =0.
\end{align*}
Hence, we have
\begin{equation*}
H(t) =  \int_\mathbb{R} \big[u(x,t) - u^a(x,t)\big] dx = 0
\quad
\mbox{for} ~\mbox{all} ~t\geqslant 0.
\end{equation*}
Define the perturbation $\phi(x, t)$ and the anti-derivative variable
$\Phi(x, t)$ as
\begin{equation*}
\phi(x,t) \equiv u(x,t) - u^a(x,t),
\quad
\Phi(x,t) \equiv \int_{-\infty}^x \phi(y,t)dy.
\end{equation*}
Note that $H(t)=0$ ensures that the anti-derivative variable $\Phi(x,t)$ is well defined
in some Sobolev spaces, like $L^2(\mathbb{R})$, $H^1(\mathbb{R})$, etc. It remains to construct the ansatz $v^a(x, t)$. Define
\begin{equation*}
v^a(x, t) \equiv f(u^a)  - a^2 u^a_{x } + \int_{-\infty}^x u^a_{tt} - E_{1} - E_{2} \ \ {\rm and} \ \  \psi(x, t)  \equiv v(x, t) - v^a(x, t).
\end{equation*}
Then we can get
\begin{equation*}
\phi_t + \psi_x = 0, \ \  \Phi_t = - \psi
\ \ {\rm and} \  \
 \Phi_x = \phi.
\end{equation*}

Now, the main result in this paper can be stated as follows.
\begin{theorem}\label{mainthm}
Let $\varepsilon =1$ and the sub-characteristic condition $\eqref{scc}$ hold. Assume that  \eqref{newlp1.1-4} is strictly hyperbolic and each characteristic
field is either genuinely nonlinear or linearly degenerate.
Suppose that the Riemann solution of \eqref{newlp1.1-4} and
\eqref{lp1.1-10} consists of one contact discontinuity with zero wave speed and $n-1$ shock waves,  whose strengths
satisfy $(\ref{strength})$ and $\delta_i>0$ ($i=1,\cdots,n$), and the structural condition \eqref{sc} holds.
Let the relaxation approximation $\bar{u}(x,t)$ and the ansatz $u^a(x,t)$ be defined by
$(\ref{ubar})$ and $(\ref{ua})$, respectively. Then there exists a small positive constant
$\delta_0$ such that if the wave strength and the initial value
 satisfy
\begin{equation}\label{ii1}
|u_+ - u_-| + \|\Phi(\cdot,0)\|_{H^3} + \|\psi(\cdot,0)\|_{H^2}
+ \delta_0^{3/2}\|u_0(\cdot)-\bar{u}(\cdot,0)\|_{L^1}
\leqslant \delta_0,
\end{equation}
then the problem $(\ref{lp1.1-1})$ with $(\ref{lp1.1-3})$ admits a unique global solution
$ (u, v)(x,t)$ satisfying
\begin{equation*}
u(x,t)\in C\big([0, +\infty); H^2\big) \cap L^2\big(0, +\infty; H^3\big),
\end{equation*}
\begin{equation*}
v(x,t)\in C\big([0, +\infty); H^1\big) \cap L^2\big(0, +\infty; H^2\big),
\end{equation*}
and
\begin{equation}
\|(u-u^a, v-v^a)(\cdot, t)\|_{L^\infty} \rightarrow 0, \quad \mbox{as} ~ t \rightarrow +\infty.
\end{equation}
\end{theorem}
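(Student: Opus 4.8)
\noindent The plan is to prove Theorem~\ref{mainthm} by the classical anti-derivative method, combined with weighted energy estimates adapted to the underlying wave structure and supplemented by a heat-kernel analysis for the linearly degenerate family, closing everything by a continuity (bootstrap) argument built on a local existence theorem.

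First I would write the perturbation equations in the anti-derivative variable. Subtracting \eqref{eqvjx} (with $\varepsilon=1$) from \eqref{uaequ} and integrating once in $x$ — legitimate precisely because $H(t)\equiv0$ — yields a damped wave equation
\begin{equation*}
\Phi_{tt}+\Phi_t+f'(u^a)\Phi_x-a^2\Phi_{xx}=-Q(u^a,\Phi_x)-E_1-E_2,
\end{equation*}
where $Q$ is the quadratic remainder of $f(u^a+\Phi_x)-f(u^a)$ and $E_1,E_2$ are the error terms in \eqref{E1E2}. Passing to the characteristic components $W=(w_1,\dots,w_n)^T:=L(u^a)\Phi$ and using $Lf'R=\Lambda$ diagonalizes the principal part into $n$ scalar damped wave equations $w_{i,tt}+w_{i,t}+\lambda_i(u^a)w_{i,x}-a^2w_{i,xx}=G_i$, where $G_i$ collects the inter-component coupling (which carries a factor $u^a_x$ or $u^a_t$), the quadratic terms, and $-l_i\cdot(E_1+E_2)$. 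Since wave profiles from distinct families separate at a linear rate, all products of such profiles — and the shock-profile-times-contact-wave products hidden in $E_1,E_2$ and in the coupling — decay exponentially in both $x$ and $t$, hence are time-integrable; this ``wave interaction'' bookkeeping is used throughout.

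For each genuinely nonlinear index $i\neq p$ I would then run the weighted $L^2$ estimate of \cite{lhl1,z}: multiply the $i$-th equation by $\omega_i(x-s_it)w_i$ (and pair separately with $w_{i,t}$), with $\omega_i$ a weight built from the monotone shock profile, and integrate. The subcharacteristic condition \eqref{scc} makes the quadratic form coming from $w_{i,t}$ and $-a^2w_{i,xx}$ positive definite (the effective diffusion coefficient is $a^2-\lambda_i^2>0$ after an integration by parts), while the nonzero speed $s_i$ together with $\omega_i'$ produces an extra dissipative term $\gtrsim\int\omega_i'w_i^2$ controlling $w_i$ in the $i$-th zone. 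This closes the $i\neq p$ part modulo interaction terms and contributions localized near the contact wave. The genuinely new point is the contact family $i=p$: since $s_p=0$ and $\lambda_p(u^a)\to0$ along the contact wave, neither the transport term nor any weight can manufacture an $\int w_p^2$ dissipative term — the energy estimate controls $\|w_{p,x}\|_{L^2}$ (via $a^2$) but leaves $w_p$ itself, together with a residual coming from the constant characteristic speed, which is exactly the gap in the $L^2$ estimate \eqref{loemissp}. To close it I would exploit that $\rho$ in \eqref{rho} solves the heat equation $\rho_t=a^2\rho_{xx}$ and that, under \eqref{sc}, $u^p=u(\rho)$ solves the viscous scalar law exactly along $C_p(u_p)$; recasting the $p$-th equation via the heat kernel $\mathcal G$ and Duhamel's formula, the residual is controlled by the decay bounds $\|\partial_x^k\mathcal G(t)\|_{L^q}\lesssim t^{-k/2-(1-1/q)/2}$ of the heat-kernel lemma (Lemma~\ref{gh}), which is where the term $\delta_0^{3/2}\|u_0-\bar u\|_{L^1}$ in \eqref{ii1} enters. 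The crucial — and, I expect, hardest — step is to verify that the $u^a_{tt}$ contribution to the $p$-th diagonalized equation is a \emph{higher-order} term: using \eqref{eqvjx} to trade $u_{tt}$ for $f(u)_x$, $u_{xx}$ and lower-order quantities, and invoking \eqref{sc}, the leading part cancels, leaving only a quadratically small remainder. This is the place where ``a wave equation behaves like a parabolic system'' for the $p$-th family, and it is what forces the hypothesis of exactly one linearly degenerate characteristic field; I expect it to be the principal obstacle.

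Finally, for the higher-order estimates I would differentiate the $\Phi$-equation up to three times in $x$ — recovering $\phi=\Phi_x$ and $\psi=-\Phi_t$ — and run energy estimates in $H^3$ for $\Phi$ and $H^2$ for $\psi$, with \eqref{scc} again supplying the parabolic-type dissipation $\int(a^2-\lambda_i^2)|\partial_x^k\phi|^2$ and the relaxation term contributing the damping of $\psi$; to dispense with the structural condition $\nabla l_p\cdot r_p=0$ imposed in \cite{hpw}, the top-order estimate should be performed in a slightly modified variable (working with $\phi$ itself, or with a frozen-coefficient eigenframe, at top order) so that the offending term never appears. Setting $N(T):=\sup_{[0,T]}\big(\|\Phi(\cdot,t)\|_{H^3}+\|\psi(\cdot,t)\|_{H^2}\big)$ and starting from a standard local existence theorem for \eqref{lp1.1-1} written in the perturbation variables, the estimates above combine into an a priori bound $N(T)^2+\int_0^T\mathcal D(t)\,dt\lesssim N(0)^2+\big(|u_+-u_-|+\delta_0^{3/2}\|u_0-\bar u\|_{L^1}\big)\cdot(\cdots)+(\text{interaction terms})$, with dissipation $\mathcal D$ dominating $\|\phi\|_{H^2}^2$, $\|\psi\|_{H^1}^2$ and the weighted contact-wave quantities; smallness of $\delta_0$ then closes this by continuity, giving global existence and $\int_0^\infty\big(\|\phi\|_{H^2}^2+\big|\tfrac{d}{dt}\|\phi\|_{H^1}^2\big|\big)\,dt<\infty$. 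The usual lemma forces $\|\phi(\cdot,t)\|_{H^1}\to0$, hence $\|(u-u^a)(\cdot,t)\|_{L^\infty}\to0$, and the same argument applied to $\psi$ (through $\psi=-\Phi_t$ and its own equation) yields $\|(v-v^a)(\cdot,t)\|_{L^\infty}\to0$, which is the assertion of the theorem.
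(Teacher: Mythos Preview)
Your overall architecture---anti-derivative, diagonalize via $W=L(u^a)\Phi$, weighted energy estimate, a separate treatment of the $p$-th family, higher-order estimates, continuation---matches the paper's, but several of the specific mechanisms you describe are not the ones that actually work, and one key device is missing.

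First, your weights $\omega_i(x-s_it)$ built from the shock profiles correspond only to the paper's $\alpha_i^s$. What you leave ``modulo \ldots contributions localized near the contact wave'' is precisely the cross terms $\int |u^p_x|\,w_i^2$ for $i\neq p$, and these do \emph{not} come for free. The paper controls them with an additional contact-wave weight $\alpha_i^c=\eta^{\pm m}$, $\eta=\rho/\rho_+$, taken with the large exponent $m=\delta^{-1/2}$; the transport term then produces $-O(1)m\int|u^p_x|w_i^2$ (see \eqref{alphaixc}), which absorbs the coupling. Without this weight your $i\neq p$ estimate does not close.

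Second, Lemma~\ref{gh} is not a Duhamel/heat-kernel-decay bound of the form $\|\partial_x^k\mathcal G\|_{L^q}\lesssim t^{-\cdots}$. It is an energy-type inequality: with $g$ the explicit antiderivative of a Gaussian in \eqref{hkg}, one multiplies the $p$-th scalar equation by $w_p g^2$ and integrates; Lemma~\ref{gh} then bounds $\int_0^t\!\int g_x^2 w_p^2$ by $\|w_p(0)\|^2+\int_0^t\|w_{px}\|^2$ plus a duality term, which yields control of the residual $\delta^{-1/2}\int_0^t\!\int|u^p_x|^2 w_p^2$ appearing in \eqref{loemissp}. The $w_{ptt}$ term is handled not by substituting \eqref{eqvjx} but by a direct integration by parts in $t$, $w_{ptt}w_pg^2=(w_pw_{pt})_tg^2-w_{pt}^2g^2$, see \eqref{utt}; the structural condition \eqref{sc} is used elsewhere (in bounding $E_{2x}$ and $a_{pj}$), not here.

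Third, the assumption $\delta_0^{3/2}\|u_0-\bar u\|_{L^1}\leqslant\delta_0$ does not enter the heat-kernel step. It bounds the shifts $x_i$ via \eqref{uubarinitial}, hence the separation time $t_0$ in \eqref{t0} by $t_0\leqslant O(1)\delta_0^{-1/2}$, which is what makes the interaction error $\int_0^{t_0}\!\int e|W|$ in \eqref{ew} small.
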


We give several remarks on the theorem above. First, if $\delta_i=0$ for some integer $i\in[1,n]$, that
is, the rank of the waves of the Riemann solution to \eqref{newlp1.1-4} and
\eqref{lp1.1-10} is less than $n$, then our result also holds for zero mass
perturbation $(x_i=0, \;i=0,\cdots,n)$. Second, it follows from the proof that the result stated in Theorem \ref{mainthm} also holds for the Riemann solution consisting of only shock waves without contact discontinuities. Finally, the Riemann solution is assumed to contain one contact discontinuity and $n-1$ shock waves in this paper, due to the technique reason; the authors are working on a more general Riemann solution which consists of more than one contact discontinuity.

\textbf{Notations}.  Throughout this paper, generic positive constants
are denoted by $C$, $C_i$ ($i=1,2,3$) and $O(1)$ without confusion. These
constants depend only on the flux $f(u)$, the constant $a$ and the far field data. The functional space $H^l(\mathbb{R})$ denotes the $l$-th order sobolev
space with the norm
$$\|u\|_l=\sum_{j=0}^l\|\partial_x^j u\|, \qquad \textrm{when} \; \|\cdot\|=\|\cdot\|_{L^2(\mathbb{R})},$$
where $l$ is a non-negative integer.

\section{Preliminaries}
\quad ~
We list the following inequality based on the heat kernel for later use, whose proof can be found in  \cite{huang-li-matsumura,z}.

Let $\gamma$ be a positive constant and
\begin{equation}\label{hkg}
g(x,t) = (1 + t)^{- {1}/{2}} \int_{-\infty}^x \exp\big\{- \gamma y ^2/(1 + t)\big\} dy.
\end{equation}
It is easy to check that
\begin{equation}\label{gproperty}
g_t = {g_{xx}}/{(4\gamma)}, \quad \|g(\cdot, t)\| = \sqrt{\pi/\gamma}.
\end{equation}

\begin{lemma}\label{gh}
For $0 < T \leqslant +\infty$, suppose that $h(x,t)$ satisfies
\begin{equation*}
h \in L^{\infty}\big(0, T; L^2(\mathbb{R})\big), \quad
h_x \in L^2 \big(0, T; L^2(\mathbb{R})\big), \quad
h_t \in L^2 \big(0, T; H^{-1}(\mathbb{R})\big).
\end{equation*}
Then the following estimate holds for any $t\in (0, T]$,
\begin{equation*}
\int_0^t \int_\mathbb{R} h^2 g_x^2 dxdt
\leqslant 4\pi \|h(\cdot, 0)\|^2
+ \frac{4 \pi}{\gamma} \int_0^t \int_\mathbb{R} h_x^2 dxdt
+ 8\gamma\int_0^t <h_t, h g^2>_{H^{-1}\times H^1} dt.
\end{equation*}
\end{lemma}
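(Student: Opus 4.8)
The plan is to run the standard weighted heat-kernel argument: introduce $A(t):=\int_{\mathbb{R}}h^2(x,t)g^2(x,t)\,dx$, differentiate it in time, and integrate the resulting identity over $[0,t]$. First I would record that the hypotheses make this legitimate. Since $g_x=(1+t)^{-1/2}\exp\{-\gamma x^2/(1+t)\}$ is bounded and $g(\cdot,t)$ increases monotonically from $0$ to $\sqrt{\pi/\gamma}$ (so $0\leqslant g\leqslant\sqrt{\pi/\gamma}$ pointwise, hence $g^2\leqslant\pi/\gamma$), one has $hg^2\in L^2(0,T;H^1)$ with $(hg^2)_x=h_xg^2+2hgg_x$, while multiplication by the smooth bounded function $g^2$ with bounded $x$-derivatives maps $h_t\in L^2(0,T;H^{-1})$ into $L^2(0,T;H^{-1})$. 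By the standard lemma on functions belonging to $L^2(0,T;H^1)$ with distributional time derivative in $L^2(0,T;H^{-1})$, $A$ is then absolutely continuous on $[0,T)$ and, for a.e.\ $t$,
\[
A'(t)=2\langle h_t,\,hg^2\rangle_{H^{-1}\times H^1}+2\int_{\mathbb{R}}h^2 g g_t\,dx.
\]

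Next I would rewrite the last term using the relation $g_t=g_{xx}/(4\gamma)$ from \eqref{gproperty} and integrate by parts twice; the boundary terms vanish because of the Gaussian decay of $g_x$ together with $h,h_x\in L^2$ (rigorously, after a standard truncation). This gives
\[
2\int_{\mathbb{R}}h^2 g g_t\,dx=\frac{1}{2\gamma}\int_{\mathbb{R}}h^2 g g_{xx}\,dx=-\frac{1}{\gamma}\int_{\mathbb{R}}h h_x g g_x\,dx-\frac{1}{2\gamma}\int_{\mathbb{R}}h^2 g_x^2\,dx.
\]
Substituting into the formula for $A'(t)$, solving for $\tfrac{1}{2\gamma}\int_{\mathbb{R}}h^2 g_x^2\,dx$, integrating over $[0,t]$, and discarding the non-positive term $-A(t)$, I obtain
\[
\frac{1}{2\gamma}\int_0^t\!\!\int_{\mathbb{R}}h^2 g_x^2\,dx\,d\tau\leqslant\int_{\mathbb{R}}h^2(x,0)g^2(x,0)\,dx+2\int_0^t\langle h_t,hg^2\rangle\,d\tau-\frac{1}{\gamma}\int_0^t\!\!\int_{\mathbb{R}}h h_x g g_x\,dx\,d\tau.
\]

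Finally I would estimate the two remaining terms: the initial term by $\tfrac{\pi}{\gamma}\|h(\cdot,0)\|^2$ using $g^2(\cdot,0)\leqslant\pi/\gamma$; and the cross term by Young's inequality with a weight tuned to absorb a quarter of the left-hand side, namely $\tfrac{1}{\gamma}|h h_x g g_x|\leqslant\tfrac{1}{4\gamma}h^2 g_x^2+\tfrac{1}{\gamma}h_x^2 g^2$, followed by $g^2\leqslant\pi/\gamma$ in the last term. Absorbing $\tfrac{1}{4\gamma}\int_0^t\!\int_{\mathbb{R}}h^2 g_x^2$ into the left side and multiplying through by $4\gamma$ then produces exactly the claimed inequality. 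The algebra is elementary and the only genuine subtlety — the step I would treat most carefully — is the justification of the differentiation of $A(t)$ and of the vanishing boundary terms under the weak regularity $h\in L^\infty(0,T;L^2)$, $h_x\in L^2(0,T;L^2)$, $h_t\in L^2(0,T;H^{-1})$; this is precisely where all three hypotheses are used.
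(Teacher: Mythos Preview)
Your argument is correct and is precisely the standard heat-kernel computation that underlies this lemma. The paper itself does not supply a proof of Lemma~\ref{gh}; it merely cites \cite{huang-li-matsumura,z}, and the argument you outline is the one found there: differentiate $\int_\mathbb{R} h^2 g^2\,dx$, use $g_t=g_{xx}/(4\gamma)$ and one integration by parts to generate the good term $-\tfrac{1}{2\gamma}\int h^2 g_x^2$, integrate in time, bound $g^2\leqslant\pi/\gamma$ pointwise, and absorb the cross term via Young's inequality with the exact weight you chose. The constants you obtain match the statement line by line, and your remarks on the regularity needed to justify the time differentiation and the vanishing boundary terms are appropriate.
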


The following two lemmas concern some properties of the relaxation contact waves (the proof is
clear, based on the classical heat kernel) and of the relaxation shock waves (the interested
reader may refer to \cite{lhl1,xzp} for the proofs).  It should be noted that the sub-characteristic condition \eqref{scc} plays an important role in deriving the properties of relaxation shock waves.

\begin{lemma}\label{lemmacontact}
The $p$-th contact wave  $u^p(x,t)=u(\rho(x,t)) \in C_p(u_p)$ satisfies the following properties:
1) the $p$-th characteristic speed is zero, i.e.,  $\lambda_p(u^p(x,t)) =0$; 2)  $u^p$ tends to the far field states exponentially fast, i.e.,
\begin{equation*}
\begin{split}
 & |u^p(x,t)-u_p|\leqslant O(1)\delta_p\exp\left\{-\frac{x^2}{8a^2(1+t)}\right\},\quad x<0,\\
  &      |u^p(x,t)-u_{p+1}|\leqslant O(1)\delta_p\exp\left\{-\frac{x^2}{8a^2(1+t)}\right\},\quad x>0;\\
 \end{split}
\end{equation*}
3) the spatial and temporal derivatives of $u^p$ decays for large time, i.e.,
 \begin{equation*}
\begin{split}
&|u^p_x|=\rho_x=O(1)\delta_p(1+t)^{- {1}/{2}}\exp\left\{-\frac{x^2}{4a^2(1+t)}\right\},\\
&|u^p_t|+|u^p_{xx}|\leqslant O(1)\delta_p(1+t)^{-1}\exp\left\{-\frac{x^2}{8a^2(1+t)}\right\},\\
& |u^p_{xt}|\leqslant O(1)\delta_p(1+t)^{-3/2}\exp\left\{-\frac{x^2}{16a^2(1+t)}\right\},\\
&|u^p_{tt}|+|u^p_{xxt}|\leqslant O(1)\delta_p(1+t)^{-2}\exp\left\{-\frac{x^2}{32a^2(1+t)}\right\}.\\
\end{split}
\end{equation*}
\end{lemma}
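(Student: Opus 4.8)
The plan is to reduce all three assertions to elementary estimates on the scalar heat equation. First I would solve \eqref{rho} explicitly: the datum at $t=-1$ jumps from $\rho_-$ (for $x<0$) to $\rho_+$ (for $x>0$), so convolution with the kernel of $\rho_t=a^2\rho_{xx}$ gives
\begin{equation*}
\rho(x,t)=\rho_-+\frac{\rho_+-\rho_-}{\sqrt{\pi}}\int_{-x/\sqrt{4a^2(1+t)}}^{+\infty}e^{-z^2}\,dz,\qquad
\rho_x(x,t)=\frac{\rho_+-\rho_-}{\sqrt{4\pi a^2(1+t)}}\,e^{-x^2/(4a^2(1+t))},
\end{equation*}
and the maximum principle yields $\rho_-<\rho(x,t)<\rho_+$, so $u^p(x,t)=u(\rho(x,t))$ always lies on the sub-arc of $C_p(u_p)$ between $u_p$ and $u_{p+1}$. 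The key simplification is that $d^2u/d\rho^2=\nabla r_p(u(\rho))\cdot r_p(u(\rho))$ vanishes on $C_p(u_p)$ by the structural condition \eqref{sc}, so $u(\rho)$ is affine there, $u(\rho)=u_p+(\rho-\rho_-)\,r_p(u_p)$; hence $u^p=u_p+(\rho-\rho_-)\,r_p(u_p)$ and $\partial_x^j\partial_t^k u^p=r_p(u_p)\,\partial_x^j\partial_t^k\rho$ for all $j+k\geqslant1$. Consequently $\delta_p=|r_p(u_p)|\,|\rho_+-\rho_-|$ is comparable to $|\rho_+-\rho_-|$, and every bound claimed for $u^p$ or its derivatives becomes, up to $O(1)$, the corresponding bound for $\rho$ or its derivatives; time derivatives are traded for spatial ones via $\rho_t=a^2\rho_{xx}$, $\rho_{xt}=a^2\rho_{xxx}$, $\rho_{tt}=a^2\rho_{xxt}=a^4\rho_{xxxx}$, $\rho_{xxt}=a^2\rho_{xxxx}$.

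For assertion 1) I would argue that linear degeneracy of the $p$-th field means $\nabla\lambda_p\cdot r_p\equiv0$, i.e.\ $\lambda_p$ is constant along $C_p(u_p)$; the defining relation of the $p$-contact discontinuity $(u_p,u_{p+1},s_p)$ forces $\lambda_p(u_p)=\lambda_p(u_{p+1})=s_p=0$ under the normalization $s_p=0$. Hence $\lambda_p\equiv0$ on the whole arc, and in particular $\lambda_p(u^p(x,t))=0$.

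For assertion 2), for $x<0$ I would combine the reduction with the Gaussian tail bound $\int_b^{+\infty}e^{-z^2}\,dz\leqslant\sqrt{2\pi}\,e^{-b^2/2}$ at $b=-x/\sqrt{4a^2(1+t)}$ to get $|u^p(x,t)-u_p|=|r_p(u_p)|\,|\rho(x,t)-\rho_-|\leqslant O(1)\,\delta_p\,e^{-x^2/(8a^2(1+t))}$, and symmetrically around $u_{p+1}$ for $x>0$. For assertion 3) I would differentiate the closed form of $\rho_x$: each additional $\partial_x$ brings down a prefactor polynomial in $x/(1+t)$ multiplying the Gaussian $e^{-x^2/(4a^2(1+t))}$, and one absorbs that prefactor using $\sup_{x,t}\,|x|^m(1+t)^{-m/2}e^{-\kappa x^2/(1+t)}<\infty$, reserving a geometrically larger slice ($\tfrac12,\tfrac34,\tfrac78,\dots$) of the exponent as the order rises — this is exactly what turns $4a^2$ into $8a^2$, $16a^2$, $32a^2$. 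Feeding these through the $\rho_t\leftrightarrow\rho_{xx}$ identities yields the stated bounds for $u^p_x$, $u^p_t$, $u^p_{xx}$, $u^p_{xt}$, $u^p_{tt}$, $u^p_{xxt}$.

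I do not expect a real obstacle — the paper itself calls the proof ``clear'' — but the one point that needs care is the reduction: the chain rule collapses because \emph{only} $\nabla r_p\cdot r_p=0$ is used and no higher-order structural condition, precisely since every $\rho$-derivative of $u(\rho)$ beyond the first is a $\rho$-derivative of $r_p(u(\rho))$, whose directional derivative along $C_p(u_p)$ is $\nabla r_p\cdot r_p=0$. One must also use that $r_p$ is bounded above and below on the compact arc $\{u(\rho):\rho\in[\rho_-,\rho_+]\}$ to pass freely between $\delta_p$ and $|\rho_+-\rho_-|$ and to keep all constants $O(1)$; and, as the text notes, the sub-characteristic condition \eqref{scc} is not needed here, only for the relaxation shock waves.
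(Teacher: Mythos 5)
Your proposal is correct and matches the route the paper intends: the paper omits the proof, saying only that it is ``clear, based on the classical heat kernel,'' and your argument is exactly that — the explicit error-function solution of \eqref{rho}, Gaussian tail bounds, and the reduction of all derivatives of $u^p$ to derivatives of $\rho$ via the structural condition \eqref{sc} (which the paper itself uses in the same way in \eqref{E2x}). The observation that \eqref{sc} makes $u(\rho)$ affine, so that $r_p(u^p)$ is a constant vector along the wave, is a clean way to package the chain-rule collapse and is consistent with everything the paper does.
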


\begin{lemma}\label{lemshock}
For $i\neq p$, the relaxation shock wave
$u^i(x, t) = \varphi^i(\xi_i) = \varphi^i(x - s_i t)$
satisfies the following properties:
\begin{equation*}\begin{split}
(1)\quad&\frac{d}{d\xi_i}\lambda_i\big(\varphi^i(\xi_i)\big)<0, \quad{\rm for} \; \xi_i\in \mathbb{R};\\
(2)\quad&  \Big|\frac{d}{d
\xi_i}\lambda_i\big(\varphi^i(\xi_i)\big)\Big| =
O(1)\Big|\frac{d }{d\xi_i}\varphi^i(\xi_i)\Big|,\quad{\rm for} \; \xi_i\in \mathbb{R}; \\
(3)\quad &\int_{\mathbb{R}}\Big|\frac{d }{d
\xi_i}\lambda_i(\varphi^i(\xi_i))\Big|d\xi_i\leqslant O(1)\delta_i;\\
(4)\quad &\Big|\frac{d}{d \xi_i}\lambda_i(\varphi^i(\xi_i))\Big|\leqslant
O(1)\delta_i^2\exp\{-C\delta_i|\xi_i|\},\\
& \Big|\frac{d^2}{d \xi_i^2}\varphi^i(\xi_i)\Big|\leqslant O(1)\delta_i\Big|\frac{d}{d
\xi_i}\varphi^i(\xi_i)\Big|,\quad{\rm for} \; \xi_i\in \mathbb{R};\\
(5)\quad&|\varphi^i(\xi_i)-u_i|\leqslant
O(1) \delta_i \exp\{-C\delta_i|\xi_i|\},\quad \xi_i<0,\\
&|\varphi^i(\xi_i)-u_{i+1}|\leqslant O(1) \delta_i
\exp\{-C\delta_i|\xi_i|\},\quad \xi_i>0.
\end{split}\end{equation*}
\end{lemma}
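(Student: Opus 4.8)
\textbf{Proof plan for Lemma \ref{lemshock}.}

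The plan is to analyze the traveling-wave ODE \eqref{shequ} directly, treating it as a perturbation of the corresponding viscous shock profile for the conservation law \eqref{newlp1.1-4}, and to extract the decay and monotonicity properties from a center-manifold / phase-plane reduction near the end state $u_{i+1}$. First I would rewrite \eqref{shequ} as a first-order system in $(\varphi^i, (\varphi^i)')$: since $a^2 - s_i^2 > 0$ by the sub-characteristic condition \eqref{scc} (note $|s_i| < a$ because $\lambda_i(u_{i+1}) < s_i < \lambda_i(u_i)$ and $|\lambda_i| < a$), the second-order equation is non-degenerate and $(\varphi^i)'' = (a^2 - s_i^2)^{-1}[f'(\varphi^i) - s_i \mathbb{I}](\varphi^i)'$. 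A standard Lax-shock analysis (as in \cite{lhl1,xzp}) shows that for $\delta_i$ small the heteroclinic orbit connecting $u_i$ to $u_{i+1}$ exists, is unique up to translation, and lies within $O(\delta_i)$ of the line segment joining the two states, with $(\varphi^i)' = O(\delta_i^2)$ pointwise. The key linearization is at $u_{i+1}$: the linearized matrix $(a^2 - s_i^2)^{-1}[f'(u_{i+1}) - s_i \mathbb{I}]$ has the eigenvalue $(a^2-s_i^2)^{-1}(\lambda_i(u_{i+1}) - s_i) < 0$ associated with $r_i(u_{i+1})$, and this is the slow stable direction governing the approach; its size is $O(\delta_i)$ since $s_i - \lambda_i(u_{i+1}) = O(\delta_i)$ by the Rankine-Hugoniot relations for a weak shock. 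All other eigenvalues are bounded away from zero (by strict hyperbolicity and $|s_i| < a$), hence the orbit approaches $u_{i+1}$ tangent to $r_i(u_{i+1})$ along the center-stable manifold at the exponential rate $e^{-C\delta_i \xi_i}$; symmetrically for $u_i$ as $\xi_i \to -\infty$. This gives (5), and then (4)$_2$ follows because differentiating \eqref{shequ} once more expresses $(\varphi^i)''$ through $(\varphi^i)'$ with a coefficient matrix of size $O(\delta_i)$ along the profile.

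For the $\lambda_i$-estimates I would project onto the $i$-th field. Along the profile, $\frac{d}{d\xi_i}\lambda_i(\varphi^i) = \nabla\lambda_i(\varphi^i)\cdot(\varphi^i)'$. Writing $(\varphi^i)' = \sum_j c_j(\xi_i) r_j(\varphi^i)$, the Lax-shock structure for a weak genuinely nonlinear shock forces $(\varphi^i)'$ to be dominated by its $r_i$-component, $c_i = O(\delta_i)\,|(\varphi^i)'| \cdot (\text{something bounded below})$; more precisely the classical reduction shows $(\varphi^i)'$ is parallel to $r_i(\varphi^i)$ up to an $O(\delta_i)$ relative error, so $|\nabla\lambda_i(\varphi^i)\cdot(\varphi^i)'| = |\nabla\lambda_i\cdot r_i|\,|c_i| + O(\delta_i)|(\varphi^i)'|$, and since $\nabla\lambda_i\cdot r_i$ is bounded away from zero (genuine nonlinearity, uniformly near $u_{i+1}$), this yields (2). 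Property (1) — strict monotone decrease of $\lambda_i$ along the profile — is the statement that the shock is compressive; with the sign normalization $\nabla\lambda_i\cdot r_i > 0$ it follows from the fact that $c_i$ has a fixed sign along the whole orbit, which in turn is read off from the scalar Riccati-type equation that the reduction produces for the $i$-th component (the Burgers profile $c_i' \sim -(\nabla\lambda_i\cdot r_i)c_i^2 + \dots$ up to higher-order corrections, whose sign is controlled for $\delta_i$ small). Property (3) is then just $\int_\mathbb{R}|\frac{d}{d\xi_i}\lambda_i(\varphi^i)|\,d\xi_i = |\lambda_i(u_i) - \lambda_i(u_{i+1})| = O(\delta_i)$ using (1) to remove the absolute value (so the integral telescopes), and (4)$_1$ combines (2), the exponential bound from (5), and the global bound $|(\varphi^i)'| = O(\delta_i^2)$ to get $|\frac{d}{d\xi_i}\lambda_i(\varphi^i)| = O(1)|(\varphi^i)'| \leqslant O(1)\delta_i^2 e^{-C\delta_i|\xi_i|}$.

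The main obstacle is making the claim ``$(\varphi^i)'$ is essentially parallel to $r_i(\varphi^i)$ along the entire orbit with a uniform $O(\delta_i)$ error, and its $i$-th coefficient never changes sign'' fully rigorous in the $n$-dimensional setting — i.e., the center-manifold reduction of \eqref{shequ} down to the one-dimensional Burgers-type profile equation, with quantitative control of the invariant-manifold remainder in terms of $\delta_i$ and uniformly in $\xi_i$. This is exactly the content of the cited references \cite{lhl1,xzp}, so in the write-up I would state the reduction, invoke those works for the construction and uniform estimates of the profile, and then derive (1)--(5) as consequences as sketched above; the sub-characteristic condition enters precisely to guarantee $a^2 - s_i^2 > 0$, which is what keeps the traveling-wave ODE a genuine (non-singular) perturbation of the viscous-shock ODE and preserves the dissipative sign structure.
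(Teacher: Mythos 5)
Your proposal is correct in substance, but note that the paper itself offers no proof of this lemma: it simply refers the reader to \cite{lhl1,xzp}, remarking only that the sub-characteristic condition is what makes the argument work. Your sketch is therefore being compared with the standard argument in those references rather than with anything in the text, and it reconstructs that argument faithfully: the sub-characteristic condition gives $a^2-s_i^2>0$, the profile approaches its end states tangentially to $r_i$ at the slow rate $O(\delta_i)$, genuine nonlinearity gives the two-sided comparison in (2) and the sign in (1), and (3), (4), (5) follow as you describe. The one place where you make the argument harder than it needs to be is the phase-plane reduction: working in the $2n$-dimensional $(\varphi^i,(\varphi^i)')$ space forces you to deal with $n$ extra zero eigenvalues of the linearization (coming from the constant solutions), whereas equation \eqref{shequ} integrates once exactly, using $(\varphi^i)'(\pm\infty)=0$, to give the first-order $n$-dimensional system
\begin{equation*}
(a^2-s_i^2)\,(\varphi^i)'=f(\varphi^i)-f(u_i)-s_i(\varphi^i-u_i),
\end{equation*}
which is precisely the viscous shock-profile equation with effective viscosity $a^2-s_i^2$. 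This is not merely "a perturbation of" the viscous profile, as you put it, but literally identical to it, so all of (1)--(5) are the classical weak Lax-profile estimates of \cite{xzp} applied verbatim; the only input from the relaxation structure is the positivity of $a^2-s_i^2$. Phrasing the reduction this way also disposes of the "main obstacle" you flag at the end, since the quantitative center-manifold control of the $i$-th component is exactly what the cited references establish for the first-order system.
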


To deal with the wave interactions from the different characteristic
fields, divide $\mathbb{R}\times(0,t)$ into $n$ parts as
$\mathbb{R}\times(0,t)=\Omega_1\cup\Omega_2\cup\cdots\cup\Omega_n$,
where
\begin{equation*}
\begin{split}
&\Omega_1=\left\{(x,t)\Big|\;x\leqslant \frac{s_1+s_2}{2}t\;\right\},\\
&\Omega_i=\left\{(x,t)\Big|\;\frac{s_{i-1}+s_i}{2}t< x\leqslant \frac{s_{i}+s_{i+1}}{2}t\;\right\},\quad i=2,\cdots,n-1,\\
&\Omega_n=\left\{(x,t)\Big|\;x> \frac{s_{n-1}+s_n}{2}t\;\right\}.\\
\end{split}
\end{equation*}
We also divide $\mathbb{R}\times(0,t)$ as
$\mathbb{R}\times(0,t)=\Omega_i^-\cup\Omega_i\cup\Omega_i^+$ for
$i=1,\cdots,n$, where
\begin{equation*}
\begin{split}
&\Omega_i^+=\mathop{\cup}\limits_{j>i}
\Omega_j=\left\{(x,t)\Big|\;x\geqslant\frac{s_{i}+s_{i+1}}{2}t\;\right\},\quad
i=1,\cdots,n-1,\\
&\Omega_i^-=\mathop{\cup}\limits_{j<i} \Omega_j=\left\{(x,t)\Big|\;x\leqslant\frac{s_{i}+s_{i-1}}{2}t\;\right\},\quad
i=2,\cdots,n, \\
&\Omega_1^-=\Omega_n^+=\emptyset.
\end{split}
\end{equation*}
Then we can get the following lemma, whose proof can be found in \cite{z}.
\begin{lemma}\label{leme}
Set
\begin{equation}\label{t0}
t_0=4\max\limits_{i=1,\cdots,n}\{|x_i|\}/\min\limits_{i=2,\cdots,n}\{s_i-s_{i-1},s\},
\end{equation}
where $s$ is the minimum positive wave speed, otherwise, $s=+\infty$. When $t>t_0$,
there exists a constant $c_0(>0)$, depending only on the flux and the far field data, such that\\
(1) for $i=p$,
\begin{equation*}\label{p10}\left\{
\begin{split}
&|u^i(x-x_i,t)-u_i| \leqslant O(1)\delta_i\exp\{-c_0(t+|x|)\}, &\textrm{in}
\quad\Omega_i^-,\\
&|u^i(x-x_i,t)-u_{i+1}|\leqslant O(1)\delta_i\exp\{-c_0(t+|x|)\},
&\textrm{in}
\quad\Omega_i^+,\\
&|u^i_x(x-x_i,t)|\leqslant O(1)\delta_i(1+t)^{-1/2}\exp\{-c_0(t+|x|)\},
&\textrm{in}
\quad\Omega_i^c;\\
\end{split}\right.
\end{equation*}
(2) for $i\neq p$,
\begin{equation*}\label{p11}\left\{
\begin{split}
&|u^i(x-x_i,t)-u_i|\leqslant O(1)\delta_i\exp\{-c_0\delta_i(t+|x|)\},
&\textrm{in}
\quad\Omega_i^-,\\
&|u^i(x-x_i,t)-u_{i+1}|\leqslant O(1)\delta_i\exp\{-c_0\delta_i(t+|x|)\},
&\textrm{in}
\quad\Omega_i^+,\\
&|u^i_x(x-x_i,t)|\leqslant O(1)\delta_i^2\exp\{-c_0\delta_i(t+|x|)\},
&\textrm{in}
\quad\Omega_i^c;\\
\end{split}\right.
\end{equation*}
(3) assume further that \eqref{strength} holds, then the ansatz $u^a(x,t)$
satisfies
\begin{equation*}\label{p12}\begin{split}
&u^a_t + f(u^a)_x-u^a_{xx}=(E_1+E_2)_x \  \
 \textrm{with} \ \ |E_1|\leqslant O(1)\delta^2\exp\{-c_0\delta(t+|x|)\}, \\
&
|u^a(x,t)-u^i(x-x_i,t)||u^i_x(x-x_i,t)|\leqslant O(1)\delta^2\exp\{-c_0\delta(t+|x|)\}.
\end{split}\end{equation*}
\end{lemma}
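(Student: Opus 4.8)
<br>The plan is to reproduce, for the Jin--Xin relaxation waves, the interaction estimates of \cite{z}. The whole lemma rests on a single geometric fact — that once $t>t_0$ the fixed shifts $x_i$ are negligible against the linear-in-$t$ separation of the characteristic zones — so I would establish that first. Working in $\Omega_i^-$ (the case of $\Omega_i^+$ being symmetric), put $\xi_i:=x-x_i-s_it$ when $i\ne p$; then $x\leqslant\frac{s_{i-1}+s_i}{2}t$ together with $|x_i|<\frac{t}{4}(s_i-s_{i-1})$, which is exactly what $t>t_0$ provides, forces $\xi_i<-\frac{1}{4}(s_i-s_{i-1})t<0$, hence $|\xi_i|\geqslant c\,t$, and combined with $|x|\leqslant|\xi_i|+|x_i|+|s_i|t$ this yields the comparison $t+|x|\leqslant C|\xi_i|$ with $c,C$ depending only on the wave speeds. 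For $i=p$ (so $s_p=0$) the analogue is $x-x_p<\frac{1}{4}s_{p-1}t<0$ with $|x-x_p|\geqslant c\,t$, and using $|x|\leqslant|x-x_p|+|x_p|$ I would split the Gaussian exponent $\frac{(x-x_p)^2}{8a^2(1+t)}$ into two halves, bounding one below by $c\,|x-x_p|$ (via $|x-x_p|^2\geqslant|x-x_p|\cdot c\,t$) and the other by $c\,t$ (via $|x-x_p|^2\geqslant c^2 t^2$), to conclude $\frac{(x-x_p)^2}{8a^2(1+t)}\geqslant c_0(t+|x|)$ for $t$ large. The term $s$ in the definition of $t_0$ enters only to cover, in the same way, the zones whose nearest neighbour is the slowest positive wave.

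With this in hand, parts (1) and (2) are immediate: feeding $|\xi_i|\geqslant c(t+|x|)$ into Lemma \ref{lemshock}(5) for $|\varphi^i-u_i|$, $|\varphi^i-u_{i+1}|$ and into (2)+(4) for $|u^i_x|=|(\varphi^i)'|=O(1)\big|\frac{d}{d\xi_i}\lambda_i(\varphi^i)\big|=O(1)\delta_i^2e^{-C\delta_i|\xi_i|}$ gives part (2), while substituting the lower bound for $\frac{(x-x_p)^2}{8a^2(1+t)}$ into Lemma \ref{lemmacontact}(2)--(3) and absorbing the leftover prefactors $(1+t)^{-k/2}$ into a slightly smaller exponent — legitimate because one half of the exponent is already $\gtrsim t$ — gives part (1).

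For part (3), the PDE satisfied by $u^a$ is the identity \eqref{uaequ}, obtained by summing the profile equations \eqref{shequ} (and the contact identity following \eqref{sc}) for the individual waves. To size $E_1$ I would fix a zone $\Omega_j$, write $c_i$ for the frozen state of the $i$-th wave there ($c_i=u_{i+1}$ if $i<j$, $c_i=u_i$ if $i>j$), and use the telescoping identity $u^j+\sum_{i\ne j}c_i-\sum_{k=2}^n u_k=u^j$, so that $u^a-u^j=\sum_{i\ne j}(u^i-c_i)$. Then $E_1=[f(u^a)-f(u^j)]-\sum_{i\ne j}[f(u^i)-f(c_i)]=\sum_{i\ne j}[f'(\theta)-f'(\theta_i)](u^i-c_i)$ for intermediate states $\theta,\theta_i$ all within $O(\delta)$ of one another, whence $|E_1|\leqslant O(\delta)\sum_{i\ne j}|u^i-c_i|$; parts (1)--(2) together with the same-order condition \eqref{strength} — which makes every $\delta_i$ comparable to $\delta$, so each rate $\delta_i$ may be lowered to $\delta$ and each factor $\delta\,\delta_i$ raised to $\delta^2$ — then collapse this to $O(1)\delta^2e^{-c_0\delta(t+|x|)}$. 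The last product is the same bookkeeping: in $\Omega_j$ one pairs, for $i=j$, the exponentially small $|u^a-u^j|$ with the bounded $|u^j_x|$, and for $i\ne j$ the bounded $|u^a-u^i|\leqslant O(\delta)$ with the exponentially small $|u^i_x|$ of parts (1)--(2).

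The only step I expect to require genuine care is the contact-wave part of the separation estimate, namely converting Gaussian decay in $x$ into linear-exponential decay in $t+|x|$ while swallowing the polynomial-in-$t$ prefactors that multiply $u^p$, $u^p_x$, $u^p_{xt}$, and so on; this is precisely why both $t>t_0$ and the two-halves splitting of the exponent are needed. The uniform handling of the shock rates $\delta_i$ in part (3) is the other point that must be watched, and it is exactly what hypothesis \eqref{strength} is there to supply. Everything else reduces to Taylor expansion and the pointwise bounds of Lemmas \ref{lemmacontact} and \ref{lemshock}.
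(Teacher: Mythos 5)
Your proposal is correct, and it reconstructs essentially the argument the paper relies on: the paper itself gives no proof of Lemma \ref{leme} but defers to the reference \cite{z}, whose proof proceeds exactly as you describe --- first the geometric separation estimate $t+|x|\leqslant C|\xi_i|$ (resp.\ $|x-x_p|\geqslant ct$ with the split of the Gaussian exponent) afforded by $t>t_0$, then substitution into Lemmas \ref{lemmacontact} and \ref{lemshock}, and finally the zone-by-zone telescoping $u^a-u^j=\sum_{i\neq j}(u^i-c_i)$ with the Jacobian cancellation $f'(\theta)-f'(\theta_i)=O(\delta)$ and the comparability of the $\delta_i$ from \eqref{strength} to produce the $\delta^2$ factors in part (3). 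You also correctly isolate the two delicate points (absorbing the polynomial prefactors of the contact wave into the exponential, and the role of \eqref{strength}), so nothing essential is missing.
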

It should be noted that $t_0$ depends on the shifts of the wave
locations. The assumptions of strict hyperbolicity and the initial data \eqref{ii1}
imply a bound on $t_0$. Denote
\begin{eqnarray}\label{ee}
e=
\begin{cases}
O(1) \delta^2  \exp\{-c_0 \delta (t+|x|)\}, \quad &t>t_0,\\
O(1) \delta^2 \bigg[
\exp\Big\{-\frac{(x-x_p)^2}{8(1+t)}\Big\}
+\sum\limits_{i=1 \atop i \neq p}^n\exp\big\{-C\delta |x-s_it-x_i|\big\}
\bigg], \quad & t\leqslant t_0.
\end{cases}
\end{eqnarray}
One can easily get that
\begin{equation}\label{E1}
|E_1| \leqslant  e \quad \mbox{and} \quad |u^a-u^i||u^i_x|\leqslant e \ \ {\rm for} \ \  i = 1, \cdots, n.
\end{equation}

\section{Stability analyses}
\quad ~
In this section, we will prove Theorem $\ref{mainthm}$. The lower-order estimate is
derived in section $\ref{secloe}$,
which consists of a weighted energy estimate and a estimate based on Lemma $\ref{gh}$.
The higher-order estimates are presented in section $\ref{sechoe}$ and the asymptotic behavior is shown in the last subsection.

It follows from \eqref{eqvjx} and $(\ref{uaequ})$ that $\phi(x, t)=(u-u^a)(x,t)$
satisfies
\begin{equation}\label{lo1}
\phi_t + \big[f(u) - f(u^a)\big]_x - a^2 \phi_{xx} + \phi_{tt} = -(E_1 + E_2)_x.
\end{equation}
Integrating $(\ref{lo1})$ with respect to the spatial variable from $-\infty$ to $x$, one gets
\begin{equation}\label{lo3}
\Phi_t +   f'(u^a) \Phi_x - a^2 \Phi_{xx} + \Phi_{tt}
= - E_1 - E_2 + Q,
\end{equation}
where
\begin{equation}\label{Q}
|Q| = |f(u^a + \Phi_x) - f(u^a) -  f'(u^a) \Phi_x|
\leqslant O(1) |\Phi_x|^2.
\end{equation}
We will work on the Cauchy problem of $(\ref{lo3})$ with the initial data:
\begin{equation}\label{initial}
\Phi(x, 0) = \int_{-\infty}^x \big(u - u^a\big)(y, 0) dy \ \ {\rm and} \ \
\Phi_t(x, 0) = (v^a-v)(x, 0).
\end{equation}

Notice that the standard theory gives the local existence and uniqueness of classical solutions to $(\ref{lo3})$ and $(\ref{initial})$ for some short time $T^*$.
In order to obtain the global existence and further to study the large time
asymptotic behavior, we need to close the following a priori assumption
\begin{equation}\label{pa}
N(T) =
\sup_{0\leqslant t\leqslant T}\big(
\|\Phi\|_{H^3} + \|\Phi_t\|_{H^1}
\big) \leqslant \varepsilon_0,
\end{equation}
where the small positive constant $\varepsilon_0$ only depends on
the initial data and the wave strength. Clearly, $(\ref{pa})$ is true for a short time if we
choose $\delta_0$ small, due to local theory. We will prove that $T=+\infty$ with the help of
uniform estimates and the standard continuation argument.

\subsection{Lower-order estimates}\label{secloe}
\quad ~
To diagonalize  system $(\ref{lo3})$, we introduce a new variable
\begin{equation}\label{WPhi}
W(x,t) \equiv L\big(u^a(x,t)\big) \Phi(x,t).
\end{equation}
Then $
\Phi = R(u^a) W
$ and system \eqref{lo3} reads
\begin{equation}\label{lo6}
\begin{split}
W_t + \Lambda(u^a) W_x - a^2 W_{xx} + W_{tt} = - A W + B,
\end{split}
\end{equation}
where
\begin{align}
&A = (a_{ij})_{n \times n}
:= L(u^a) \big[R(u^a)_t - a^2 R(u^a)_{xx} + f'(u^a) R(u^a)_x + R(u^a)_{tt}\big]\label{B},
\\
&B = (b_i)_{n \times 1}:=L(u^a)\big[2 a^2 R(u^a)_x W_x - E_1 - E_2 + Q - 2 R(u^a)_t W_t\big]\label{D}.
\end{align}
As in the stability analysis for  a single relaxation waves (\emph{cf.}\cite{hpw,lhl1}),
we first  construct the weighted functions.  Set
\begin{equation}\label{eta}
\eta(x, t) \equiv  {\rho(x, t)}/{\rho_+}  ,
\end{equation}
where $\rho(x,t)$, $\rho_+$ are given by $(\ref{rho})$; and define
\begin{equation}\label{defialphaic}
\alpha_i^c = \eta ^m ~~\mbox{for} ~i < p,
\quad\quad
\alpha_p^c = 1,
\quad\quad
\alpha_i^c = \eta^{-m} ~~\mbox{for} ~i > p,
\end{equation}
with $m = \delta^{-{1}/{2}}$ here and thereafter. Then  $\alpha_i^c$ is used to take account of the relaxation contact wave. To deal with the relaxation shock waves, we choose
$$\alpha_i^s\equiv \sum_{j=1 \atop j \neq p}^n \beta_i^j(\varphi^j)
= \sum_{j=1 \atop j \neq p}^n \beta_i^j \big(\varphi^j(\xi_j)\big)
= \sum_{j=1 \atop j \neq p}^n \beta_i^j \big(\varphi^j(x - s_j t)\big),$$
where
\begin{equation*}\label{w2}
\begin{split}
\beta_i^j (\varphi^j) =
\begin{cases}
\frac{\lambda_i(\varphi^j(0)) - s_j}{\lambda_i(\varphi^j) - s_j}
\exp \bigg\{\displaystyle{- m \int_0^{\xi_j}
\frac{| \partial_{\xi_j} \lambda_j (\varphi^j) |}
{\lambda_i(\varphi^j) - s_j}
d\xi_j }\bigg\}, \quad & j \neq p, i,\\
1, \quad & j \neq p, j = i.
\end{cases}
\end{split}
\end{equation*}
A simple calculation shows that $\beta_i^j(\varphi^j)$ satisfies
\begin{equation}\label{lo9}
\partial_{\xi_j} \Big\{ \big[ \lambda_i(\varphi^j) - s_j \big] \beta_i^j (\varphi^j) \Big\}
= - m \beta_i^j (\varphi^j) \big| \partial_{\xi_j}  \lambda_j(\varphi^j) \big|,
\quad j \neq p,i.
\end{equation}
We choose the weight matrix $\alpha \equiv \mbox{diag} \{\alpha_1,\cdots, \alpha_n\}$ with
\begin{equation}\label{alphai}
\alpha_i \equiv \alpha_i^c + \alpha_i^s, \quad i = 1, \cdots, n.
\end{equation}
The assumption of the strict hyperbolicity, Lemma \ref{lemmacontact}
and Lemma \ref{lemshock} yields the bounds for these weighted functions
\begin{equation}\label{alphaiproperty}
1-C\delta^{1/2} \leqslant \alpha_i \leqslant 1+C\delta^{1/2},
 \quad i = 1,\cdots, n,
\end{equation}
for some positive constant $C$. In fact, \eqref{eta} gives
\begin{equation*}
0<\eta\leqslant 1, \quad\quad
|\eta - 1|\leqslant O(1) \delta_p,
\end{equation*}
then, if $\delta_p$ is small enough, we can get
\begin{equation}\label{alphaicproperty}
1-O(1)\delta_p^{1/2} \leqslant \eta^m \leqslant 1 \leqslant \eta^{-m}
\leqslant 1+O(1)\delta_p^{1/2}.
\end{equation}
And, in view of the strict hyperbolicity and Lemma \ref{lemshock}, when $j\neq p, i$, we have
\begin{equation*}
\bigg|- m \int_0^{\xi_j}
\frac{| \partial_{\xi_j} \lambda_j (\varphi^j) |}{\lambda_i(\varphi^j) - s_j}d\xi_j\bigg|
\leqslant O(1) m \int_{\mathbb{R}}\Big|\frac{d }{d
\xi_j}\lambda_j(\varphi^j)\Big|d\xi_j\leqslant O(1)\delta^{1/2},
\end{equation*}
so, by the Taylor expansion of the exponential function, we obtain
\begin{equation}\label{alphaisproperty}
1-O(1)\delta^{1/2} \leqslant \beta_i^j(\varphi^j)\leqslant 1+O(1)\delta^{1/2}.
\end{equation}
Thus, \eqref{alphaicproperty} and \eqref{alphaisproperty} give \eqref{alphaiproperty}.

\subsubsection{Basic energy estimate}
\quad ~
Now, we begin to derive the weighted energy estimates for system \eqref{lo6}.  Multiplying system $(\ref{lo6})$ by $W^T\alpha=(w_1\alpha_1, \cdots, w_n \alpha_n)$ and integrating the product with respect to $x$ over $\mathbb{R}$, we get
\begin{equation}\label{djgj1}
\begin{split}
&\frac{1}{2}\frac{d}{dt} \int_\mathbb{R}
\sum_{i=1}^n\big(
\alpha_i w_i^2 +   2\alpha_i w_i w_{it}
\big) dx
+ a^2 \int_\mathbb{R} \sum_{i=1}^n \alpha_i w_{ix}^2dx
- \int_\mathbb{R} \sum_{i=1}^n \alpha_i w_{it}^2 dx
+  \rm{J}\\
=&\rm{I}_1 + \rm{I}_2,
\end{split}
\end{equation}
where
\begin{equation}\label{j}
{\rm{J}} = - \frac{1}{2}  \int_\mathbb{R}  \sum_{i=1}^n
\Big\{\alpha_{it} + \big[\alpha_i \lambda_i(u^a)\big]_x \Big\} w_i^2 dx
+\int_\mathbb{R} \sum_{i=1}^n \big( a^2 \alpha_{ix}w_{ix}- \alpha_{it} w_{it} \big)  w_i dx,
\end{equation}
\begin{equation}\label{i1i2}
{\rm{I}}_1 = -  \int_\mathbb{R}  W^T \alpha A W dx,
\quad\quad
{\rm{I}}_2 =   \int_\mathbb{R}  W^T \alpha B dx,
\end{equation}
where $A$ and $B$ are defined in \eqref{B} and \eqref{D}.
Next, we will deal with these three terms $\rm{J}$, $\rm{I}_1$ and $\rm{I}_2$ in equation \eqref{djgj1}.

\begin{lemma}\label{lemJ}
Under the same assumptions as in
Theorem $\ref{mainthm}$, it holds that
\begin{equation}\label{J}
\begin{split}
\rm{J} &\geqslant
O(1) m \int_\mathbb{R} \sum_{i,j = 1 \atop j \neq i}^n |u^j_x| w_i^2 dx
+ \frac{1}{2} \int_\mathbb{R} \sum_{i=1 \atop i \neq p}^n
\alpha_i |\lambda_i(u^i)_x| w_i^2 dx
- O(1) \int_\mathbb{R} e |W|^2 dx\\
&\quad- O(1) \delta^{{1}/{4}} \big(\|W_x\|^2 + \|W_t\|^2\big)
%\int_\mathbb{R} \sum_{i=1}^n (w_{ix}^2 + w_{it}^2 )dx
.
\end{split}
\end{equation}
Here the error term $e$, $\alpha_i$ and $J$ are given by $(\ref{ee})$,  $(\ref{alphai})$
and \eqref{j}
respectively.
\end{lemma}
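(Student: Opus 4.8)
Following the weighted-energy scheme of \cite{z}, I would split $\mathrm{J}=\mathrm{J}_1+\mathrm{J}_2$, where $\mathrm{J}_1=-\frac12\int_\mathbb{R}\sum_{i}\{\alpha_{it}+[\alpha_i\lambda_i(u^a)]_x\}w_i^2\,dx$ is the part quadratic in $w_i$ and $\mathrm{J}_2=\int_\mathbb{R}\sum_{i}(a^2\alpha_{ix}w_{ix}-\alpha_{it}w_{it})w_i\,dx$ collects the terms with spatial and temporal derivatives of $W$. The aim is to show that $\mathrm{J}_1$ produces the two nonnegative "good" terms in \eqref{J}, up to errors that are either dominated by $O(1)\int e|W|^2$ or absorbed into a small fraction of the first good term itself, whereas $\mathrm{J}_2$ — which has no counterpart in the parabolic problem of \cite{z}, being generated by the $W_{tt}$ in \eqref{lo6} — is genuinely lower order and bounded by $O(1)\delta^{1/4}(\|W_x\|^2+\|W_t\|^2)$ plus absorbable errors.

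\textbf{Treating $\mathrm{J}_1$.} I would expand $\alpha_{it}+[\alpha_i\lambda_i(u^a)]_x=\alpha_{it}+\lambda_i(u^a)\alpha_{ix}+\alpha_i\,\nabla\lambda_i(u^a)\cdot u^a_x$, decompose the weight as $\alpha_i=\alpha_i^c+\alpha_i^s$ (contact part plus shock part) and write $u^a_x=\sum_k u^k_x(\cdot-x_k)$. For the shock part, each factor $\beta_i^j(\varphi^j)$ is transported with speed $s_j$, so $\partial_t\beta_i^j+\lambda_i(u^a)\partial_x\beta_i^j=(\lambda_i(u^a)-s_j)(\beta_i^j)'$; combining this with the piece $\beta_i^j\,\nabla\lambda_i(u^a)\cdot u^j_x$ and replacing $u^a$ by $\varphi^j$ on $\Omega_j$, the identity \eqref{lo9} collapses the sum to $-m\beta_i^j|\partial_{\xi_j}\lambda_j(\varphi^j)|$, and since $\beta_i^j\ge\frac12$ by \eqref{alphaiproperty} and $|\partial_{\xi_j}\lambda_j(\varphi^j)|$ is comparable to $|u^j_x|$ (Lemma \ref{lemshock}(2)), the $-\frac12$ in front of $\mathrm{J}_1$ turns this into a positive multiple of $m|u^j_x|w_i^2$, contributing the first good term for $j\neq i,p$. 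The piece with $k=i$ (which occurs only when $i\neq p$) gives $\alpha_i\,\nabla\lambda_i(u^a)\cdot u^i_x\approx\alpha_i\,\partial_{\xi_i}\lambda_i(\varphi^i)<0$ near $\Omega_i$ by Lemma \ref{lemshock}(1), whence $-\frac12$ of it is exactly the second good term $\frac12\alpha_i|\lambda_i(u^i)_x|w_i^2$; for the contact index $k=p$, $\alpha_i\nabla\lambda_i(u^a)\cdot u^p_x$ either vanishes to leading order when $i=p$ (since $\lambda_p(u^p)\equiv0$ by Lemma \ref{lemmacontact}) or is bounded by $O(1)|u^p_x|$ when $i\neq p$. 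The "pure transport" part $\partial_t\alpha_i^c+\lambda_i(u^a)\partial_x\alpha_i^c$ vanishes for $i=p$ (where $\alpha_p^c\equiv1$) and, for $i\neq p$ where $\alpha_i^c=\eta^{\pm m}$, is handled through $\eta_t=a^2\eta_{xx}$: using $a^2\eta_{xx}=-\tfrac{x}{1+t}\eta_x$ together with the sub-characteristic bound $|\lambda_i|<a$ from \eqref{scc} and the Gaussian decay of $\rho_x$ in Lemma \ref{lemmacontact}, one shows this piece is favorably signed on the bulk of the mass (where it contributes the $j=p$ part of the first good term) and negligible on its complement. Finally, every leftover — the mismatch $|u^a-u^i|\,|u^i_x|\le e$ via \eqref{E1}, the remaining cross interactions $\nabla\lambda_i(u^a)\cdot u^j_x$ ($j\neq i$) satisfying $O(1)|u^j_x|\le\frac1{10}m|u^j_x|$ once $\delta$ is small, and, off the relevant characteristic zones, the exponentially small profile derivatives from Lemma \ref{leme} (for $t>t_0$) or the explicit bounds of \eqref{ee} (on the bounded range $t\le t_0$) — is dominated by $O(1)\int e|W|^2$ plus a small fraction of $O(1)m\int\sum_{j\neq i}|u^j_x|w_i^2$.

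\textbf{Treating $\mathrm{J}_2$.} The key preliminary estimate is the uniform smallness $|\alpha_{ix}|+|\alpha_{it}|\le O(1)\delta^{1/2}$, obtained from Lemma \ref{lemmacontact} (derivatives of $\eta=\rho/\rho_+$, with $\eta_t=a^2\eta_{xx}$), from Lemma \ref{lemshock} and \eqref{lo9} (derivatives of $\beta_i^j(\varphi^j)$), and from the same-order condition \eqref{strength}, which forces $m\delta_p\le O(1)\delta^{1/2}$ and $m\delta_j^2\le O(1)\delta^{3/2}$ since $m=\delta^{-1/2}$. Then Young's inequality (optionally after integrating $a^2\int\alpha_{ix}w_{ix}w_i\,dx=-\frac{a^2}{2}\int\alpha_{ixx}w_i^2\,dx$ by parts) gives $|\mathrm{J}_2|\le O(1)\delta^{1/4}(\|W_x\|^2+\|W_t\|^2)+\frac1{4\delta^{1/4}}\int\sum_i(\alpha_{ix}^2+\alpha_{it}^2)w_i^2\,dx$, and one checks that $\delta^{-1/4}(\alpha_{ix}^2+\alpha_{it}^2)\le O(1)\delta^{3/4}(\cdots)$ — a product of a small $\delta$-power with a decaying profile factor — is again absorbed into $O(1)m\int\sum_{j\neq i}|u^j_x|w_i^2$ (using $\delta_p\ge\delta$ and $(1+t)^{-1/2}\le1$) and $O(1)\int e|W|^2$; likewise the boundary-free term $-\frac{a^2}{2}\int\alpha_{ixx}w_i^2\,dx$ with $|\alpha_{ixx}|\le O(1)\delta^{1/2}(1+t)^{-1}\exp\{-cx^2/(1+t)\}$ plus an exponentially small shock contribution. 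Adding the estimates for $\mathrm{J}_1$ and $\mathrm{J}_2$ yields \eqref{J}.

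\textbf{Main obstacle.} The delicate part is the bookkeeping inside $\mathrm{J}_1$: one has to verify that after the ODE identities \eqref{lo9} and $\eta_t=a^2\eta_{xx}$ are used, \emph{every} remaining term — the inter-family wave interactions, the mismatch between the ansatz $u^a$ and each individual wave $u^i$, and especially the rows $i\neq p$ where the characteristic speed $\lambda_i(u^a)$ along the contact wave is only $O(\delta_p)$ rather than strictly signed — is dominated uniformly in $(x,t)$ by the two good terms and $O(1)\int e|W|^2$, separately on each of the three zones $\Omega_i^-,\Omega_i,\Omega_i^+$ and in each of the two time regimes $t\le t_0$ and $t>t_0$. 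This forces one to track carefully how the large weight parameter $m=\delta^{-1/2}$ (needed so the shock weights beat the shock interactions) balances against the various powers of $\delta$ produced by \eqref{strength}. By comparison, the $W_{tt}$-induced terms in $\mathrm{J}_2$ are routine once $|\alpha_{ix}|+|\alpha_{it}|\le O(1)\delta^{1/2}$ is in hand.
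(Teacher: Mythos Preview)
Your overall decomposition $\mathrm{J}=\mathrm{J}_1+\mathrm{J}_2$, your treatment of the shock-weight transport via the ODE identity \eqref{lo9}, and your handling of $\mathrm{J}_2$ by Young's inequality with the bound $|\alpha_{ix}|+|\alpha_{it}|\le O(1)\delta^{1/2}$ all match the paper. The divergence is in how you treat the contact-weight \emph{time} derivative $\alpha_{it}^c$ inside $\mathrm{J}_1$. You bundle it with $\lambda_i(u^a)\alpha_{ix}^c$ and argue by pointwise sign, writing (for $i<p$) $\alpha_{it}^c+\lambda_i\alpha_{ix}^c=m\eta^{m-1}\big[\lambda_i-\tfrac{x}{2(1+t)}\big]\eta_x$ and claiming the bracket is favorable on the bulk of the Gaussian and ``negligible'' elsewhere. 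The paper separates the two pieces: $\alpha_{ix}^c\lambda_i(u^a)$ has a definite sign for \emph{all} $x$ (strict hyperbolicity gives $\lambda_i(u^a)<0$ for $i<p$ and $>0$ for $i>p$, while $\eta_x>0$), and this alone furnishes the $j=p$ part of the first good term. The remaining piece $\alpha_{it}^c$ is handled not by sign but by integration by parts: one uses $\eta^{m-1}\eta_{xx}\le(\eta^{m-1}\eta_x)_x$ (respectively $\eta^{-m-1}\eta_{xx}\ge(\eta^{-m-1}\eta_x)_x$), moves the derivative onto $w_i^2$, and then applies Cauchy--Schwarz with the unequal split $\rho_x^{5/8}|w_i|\cdot\rho_x^{3/8}|w_{ix}|$ to obtain
\[
\tfrac12\int_\mathbb{R}\alpha_{it}^c\,w_i^2\,dx\ \le\ O(1)m\delta^{1/4}\int_\mathbb{R}|u^p_x|w_i^2\,dx\ +\ O(1)\delta^{1/4}\|W_x\|^2.
\]
This step is precisely where the $O(1)\delta^{1/4}\|W_x\|^2$ loss in the $\mathrm{J}_1$-estimate originates.

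Your pointwise route does not close as written. In the region $|x|\gtrsim(1+t)|\lambda_i|$ the bracket $\lambda_i-\tfrac{x}{2(1+t)}$ reverses sign, and the residual there is $m\,\tfrac{|x|}{1+t}\,\eta_x\,w_i^2$, which at the same $x$ is \emph{comparable to}, not dominated by, the good term $O(1)m|u^p_x|w_i^2$ (their ratio is $\sim|x|/(1+t)$). For bounded $t$ this residual is of order $\delta^{1/2}w_i^2$; it is too large for $\int e|W|^2=O(\delta^2)|W|^2$, it involves no $W_x$ so cannot be put into $\delta^{1/4}\|W_x\|^2$, and since $w_i^2$ may concentrate in that very region it cannot be traded against the good contribution from elsewhere. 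The sub-characteristic condition $|\lambda_i|<a$ does not help here---it bounds $\lambda_i$, not $x/(1+t)$. You need the paper's integration-by-parts device (or an equivalent mechanism that converts $\alpha_{it}^c$ into a term with $w_{ix}$) to close the $\mathrm{J}_1$-estimate.
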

\begin{proof} By direct calculation, one can get
\begin{equation}\label{jgj}
 \alpha_{it} + \big[\alpha_i  \lambda_i(u^a) \big]_x
=  \big[\alpha_{it}^s + \alpha_{ix}^s \lambda_i(u^a)\big]
+ \alpha_{ix}^c  \lambda_i(u^a)
+ \alpha_i \lambda_i(u^a)_x
+ \alpha_{it}^c.
\end{equation}
Next, we will derive estimates about these four terms one by one as follows.
First, according to the definition of $\alpha_i^s$, $(\ref{lo9})$ and the smallness
of $\delta$, we can get
\begin{equation*}
\begin{split}
\alpha_{it}^s + \alpha_{ix}^s \lambda_i(u^a)
=& \sum_{j=1 \atop j \neq p}^n
\big[\lambda_i(u^a) - s_j\big] \partial_{\xi_j}\beta_i^j (\varphi_j)\\
=& \sum_{j=1 \atop j\neq p,i}^n
\frac{\lambda_i(u^a) - s_j}{\lambda_i(\varphi_j) - s_j}  \beta_i^j (\varphi_j)
\big[- m |\partial_{\xi_j}  \lambda_j(\varphi_j)|
-  \partial_{\xi_j}  \lambda_i(\varphi_j)\big]\\
\leqslant& \sum_{j=1 \atop j \neq p,i }^n O(1) \big(- m  |u^j_x|
+ |u^j_x|\big)
\leqslant - O(1) m  \sum_{j=1 \atop j \neq p,i }^n |u^j_x|.
\end{split}
\end{equation*}
Then, the above estimate gives
\begin{equation}\label{lp4.2}
\begin{split}
\frac{1}{2} \sum_{i = 1}^n \big[\alpha_{it}^s + \alpha_{ix}^s \lambda_i(u^a)\big] w_i^2
\leqslant - O(1) m \sum_{i,j = 1 \atop j \neq p,i }^n |u^j_x| w_i^2 .
\end{split}
\end{equation}
From the definition \eqref{defialphaic} of $\alpha_i^c$, we know,
\begin{equation*}
\alpha_{ix}^c = m\eta^{m-1}\eta_x ~~\mbox{for} ~i<p,
\quad\quad
\alpha_{px}^c=0,
\quad\quad
\alpha_{ix}^c = -m\eta^{-m-1}\eta_x ~~\mbox{for}~ i>p.
\end{equation*}
Then, due to the strict hyperbolicity of $(\ref{lp1.1-4})$, we have
\begin{equation}\label{alphaixc}
\begin{split}
\frac{1}{2}  \sum_{i=1}^n \alpha_{ix}^c \lambda_i(u^a) w_i^2
&= \frac{1}{2} m   \bigg[
\sum_{i=1 \atop i<p}^n \eta^{m-1} \lambda_i(u^a) w_i^2
-
\sum_{i=1 \atop i>p}^n \eta^{-m-1} \lambda_i(u^a) w_i^2
\bigg] \eta_x  \\
&\leqslant - O(1)m  \sum_{i=1 \atop i\neq p}^n \rho_x w_i^2
\leqslant - O(1)m \sum_{i=1 \atop i\neq p}^n |u^p_x| w_i^2 .
\end{split}
\end{equation}
Notice that
\begin{equation*}
\begin{split}
\lambda_i(u^a)_x
= \sum_{j=1}^n \nabla \lambda_i(u^a) u_x^j
&= \nabla\lambda_i(u^i)u^i_x
+ \big[\nabla \lambda_i(u^a) - \nabla \lambda_i(u^i)\big] u^i_x
+\sum_{j=1 \atop j \neq i}^n\nabla \lambda_i(u^a )u^j_x\\
&\leqslant -|\lambda_i(u^i)_x|\chi_{\{i \neq p\}} + O(1)
\bigg(e + \sum_{j=1 \atop j \neq i}^n |u^j_x| \bigg),
\end{split}
\end{equation*}
where $\chi$ is the characteristic function with
\begin{equation*}
\chi_{\{i \neq j\}}= 1 ~~\mbox{for} ~ i \neq j,
\quad\quad
\chi_{\{i \neq j\}}= 0 ~~\mbox{for} ~ i = j,
\end{equation*}
and $e$ is defined by $(\ref{ee})$.
Then, we can obtain the estimate
\begin{eqnarray}\label{lo14}
\begin{split}
\frac{1}{2} \sum_{i = 1}^n
\alpha_i \lambda_i(u^a)_x w_i^2
\leqslant
- \frac{1}{2}  \sum_{i=1 \atop i \neq p}^n
\alpha_i |\lambda_i(u^i)_x| w_i^2
+  O(1)\bigg( \sum_{i,j = 1 \atop j \neq i}^n |u^j_x| w_i^2
+  e |W|^2\bigg) .
\end{split}
\end{eqnarray}
From \eqref{rho} we get $\eta_t = a^2 \eta_{xx}$, this fact together
with \eqref{defialphaic} gives
\begin{equation*}
\alpha_{it}^c = a^2 m \eta^{m-1}\eta_{xx} ~~\mbox{for} ~i<p,
\quad\quad
\alpha_{pt}^c=0,
\quad\quad
\alpha_{it}^c = -a^2 m \eta^{-m-1}\eta_{xx} ~~\mbox{for} ~i>p,
\end{equation*}
because we also have the following two inequalities:
\begin{equation*}
\eta^{m-1} \eta_{xx}
=  (\eta^{m-1} \eta_x)_x - (m-1) \eta^{m-2} \eta_x^2
\leqslant (\eta^{m-1} \eta_x)_x,
\end{equation*}
\begin{equation*}
 \eta^{-m-1} \eta_{xx}
=  (\eta^{-m-1} \eta_x)_x +  (m+1) \eta^{-m-2} \eta_x^2
\geqslant (\eta^{-m-1} \eta_x)_x.
\end{equation*}
Then using the integration by parts and the Cauchy-Schwartz inequality, we have
\begin{equation}\label{alphaixt}
\begin{split}
\frac{1}{2}  \int_\mathbb{R} \sum_{i=1}^n \alpha_{it}^c w_i^2dx
&\leqslant \frac{1}{2} a^2 m   \int_\mathbb{R} \bigg[
\sum_{i=1 \atop i<p}^n ( \eta^{m-1} \eta_x)_x w_i^2
-\sum_{i=1 \atop i>p}^n(\eta^{-m-1} \eta_x)_x w_i^2
\bigg]dx\\
&= - a^2 m  \int_\mathbb{R}  \bigg(
\sum_{i=1 \atop i<p}^n \eta^{m-1} \eta_x  w_i w_{ix}
- \sum_{i=1 \atop i>p}^n \eta^{-m-1} \eta_x w_i w_{ix}
\bigg)dx\\
& \leqslant O(1)  m   \int_\mathbb{R} \sum_{i=1 \atop i \neq p}^n
\rho_x |w_i| |w_{ix}|dx\\
&= O(1)  m  \int_\mathbb{R} \sum_{i=1 \atop i \neq p}^n
\Big(\rho_x^{{5}/{8}} |w_i|\Big) \Big(\rho_x^{{3}/{8}}|w_{ix}|\Big)dx\\
&\leqslant O(1)  m \delta^{{1}/{4}}  \int_\mathbb{R} \sum_{i=1 \atop i \neq p}^n
|u^p_x| w_i^2dx
+ O(1) \delta^{{1}/{4}} \|W_x\|^2 .
\end{split}
\end{equation}
Adding \eqref{lp4.2}-\eqref{lo14} together and integrating
on $\mathbb{R}$ with respect to $x$, the resulting inequality
together with \eqref{alphaixt} yields
\begin{equation}\label{lo16}
\begin{split}
&- \frac{1}{2}  \int_\mathbb{R} \sum_{i = 1}^n
\Big\{ \alpha_{it} +  \big[ \alpha_i\lambda_i(u^a)\big]_x \Big\} w_i^2dx
\geqslant   O(1) m  \int_\mathbb{R} \sum_{i,j = 1 \atop j \neq i}^n |u^j_x| w_i^2dx\\
&\quad+ \frac{1}{2}  \int_\mathbb{R} \sum_{i=1 \atop i \neq p}^n
\alpha_i |\lambda_i(u^i)_x| w_i^2 dx
- O(1) \int_\mathbb{R}  e |W|^2 dx - O(1)\delta^{{1}/{4}} \|W_x\|^2.
\end{split}
\end{equation}
Similarly, using the Cauchy-Schwartz inequality, we can get
\begin{align*}
& \int_\mathbb{R} \sum_{i=1}^n ( a^2 \alpha_{ix}w_{ix}- \alpha_{it} w_{it} )  w_i dx\\
=& a^2 \int_\mathbb{R} \sum_{i=1}^n (\alpha_{ix}^c + \alpha_{ix}^s) w_i w_{ix} dx
- \int_\mathbb{R} \sum_{i=1}^n (\alpha_{it}^c + \alpha_{it}^s) w_i w_{it} dx\\
\geqslant& - O(1)  m \delta^{{1}/{4}}
 \int_\mathbb{R} \sum_{i,j=1 \atop  j \neq i}^n |u^j_x| w_i^2dx
- O(1) \delta^{{1}/{4}} \big(\|W_x\|^2 + \|W_t\|^2\big) .
\end{align*}
The desired estimate $(\ref{J})$ is obtained by adding the last inequality and $(\ref{lo16})$
together.
\end{proof}

\begin{lemma}\label{lemI1}
Under the same assumptions as in
Theorem $\ref{mainthm}$, it holds that
\begin{equation}\label{I1}
\begin{split}
{\rm{I}}_1 =& -  \int_\mathbb{R}  W^T \alpha A W dx
\leqslant  O(1) \int_\mathbb{R}
\sum_{i,j = 1\atop j \neq i}^n |u^j_x| w_i^2 dx
+ O(1) \delta^{{1}/{2}} \int_\mathbb{R} \sum_{i=1 \atop i \neq p }^n |u^i_x| w_i^2 dx\\
&+ O(1) \delta^{-{1}/{2}} \int_\mathbb{R} |u^p_x|^2 w_p^2 dx
+ O(1) \bigg[ \int_\mathbb{R} e |W|^2 dx
+ \varepsilon_0 \delta (1+t)^{-{3}/{2}}\bigg].
\end{split}
\end{equation}
Here the error term $e$, the matrix $A$ and $\rm{I}_1$ are given by $(\ref{ee})$, \eqref{B}
and \eqref{i1i2} respectively.
\end{lemma}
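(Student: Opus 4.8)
The plan is to expand the matrix $A$ defined in \eqref{B} into pieces with different decay/size behaviour, and then absorb each contribution using the weighted $L^2$ norms that already appear on the right-hand side of \eqref{I1}. First I would write $R(u^a)_x=\sum_{j=1}^n R(u^a)_{u}\cdot u^j_x$ and similarly for $R(u^a)_t$, $R(u^a)_{xx}$, $R(u^a)_{tt}$, and for $f'(u^a)R(u^a)_x$, so that $A$ becomes a sum of terms each carrying at least one derivative of some $u^j$. The key structural observation is the one the authors flag as the point of choosing a different variable: along the $p$-th (linearly degenerate) family, $\lambda_p(u^p)\equiv 0$ and $\nabla r_p\cdot r_p\equiv 0$ by \eqref{sc}, so the would-be worst term $a_{pp}w_p^2$ coming from $R(u^a)_t-a^2R(u^a)_{xx}+f'(u^a)R(u^a)_x$ (the first-order-in-time part of the wave operator applied to $r_p(u^p)$) vanishes up to higher order; what survives is controlled by $|u^p_x|^2$ rather than $|u^p_x|$, which is exactly why the term $O(1)\delta^{-1/2}\int |u^p_x|^2 w_p^2\,dx$ (i.e.\ $m\int|u^p_x|^2w_p^2$) appears instead of something of size $\int|u^p_x|w_p^2$ that could not be absorbed.

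Concretely I would split $A$ as $A=A^{\rm off}+A^{\rm GNL}+A^{\rm LD}+A^{\rm int}+A^{tt}$, where: $A^{\rm off}$ collects all entries $a_{ij}$ with $i\neq j$ and is bounded, by Lemmas \ref{lemmacontact} and \ref{lemshock}, by $O(1)\sum_{j}(|u^j_x|+|u^j_{xx}|+|u^j_t|+\cdots)$, which after using $|u^j_{xx}|\le O(1)\delta_j|u^j_x|$ for $j\neq p$ and the decay estimates for $u^p$ in Lemma \ref{lemmacontact}, is dominated by $O(1)\sum_{j\neq i}|u^j_x|$ plus $e$ (for the quadratic interaction pieces $|u^a-u^i||u^i_x|\le e$ by \eqref{E1}); this yields the first term on the right of \eqref{I1}. $A^{\rm GNL}$ is the diagonal part for $i\neq p$; here $a_{ii}$ contains $\nabla\lambda_i$-type contributions already handled in $\rm J$, but the genuinely extra contributions are $O(1)\delta^{1/2}|u^i_x|$-type after using $|u^i_{xx}|\le O(1)\delta_i|u^i_x|$ and $|u^i_{xt}|,|u^i_{tt}|$ comparable to $\delta_i|u^i_x|$ up to the traveling-wave relation \eqref{shequ}, giving the second term. $A^{\rm LD}$ is the diagonal entry $a_{pp}$: this is where the computation is delicate. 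Using $\frac{du^p}{d\rho}=r_p(u^p)$, $\lambda_p(u^p)=0$, \eqref{sc}, and $\rho_t=a^2\rho_{xx}$, the combination $R(u^a)_t-a^2R(u^a)_{xx}+f'(u^a)R(u^a)_x$ restricted to its $p$-th diagonal entry cancels at leading order and leaves terms quadratic in $\rho_x=u^p_x$, producing $O(1)\delta^{-1/2}\int|u^p_x|^2w_p^2\,dx$ (the factor $m=\delta^{-1/2}$ is generated when one of the surviving second-derivative-type terms is paired against $\eta^{\pm m}$-weighted neighbours, or tracked honestly through the Taylor remainder). $A^{tt}$ is the part of $A$ coming from $R(u^a)_{tt}$; by Lemmas \ref{lemmacontact}/\ref{lemshock} its entries decay like $(1+t)^{-2}$ (for the $u^p$ part) or exponentially (shock parts), so $\int W^T\alpha A^{tt}W\,dx\le O(1)\varepsilon_0^2(1+t)^{-2}\le O(1)\varepsilon_0\delta(1+t)^{-3/2}$ under the a priori bound \eqref{pa} and smallness of $\delta$ — this is the last term in \eqref{I1}; similarly any leftover cross terms localized near $t\le t_0$ are absorbed into $e|W|^2$.

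The main obstacle, as the authors themselves emphasize, is the treatment of $a_{pp}$ and the $R(u^a)_{tt}$ contribution along the linearly degenerate family: one must verify that the wave-operator correction $W_{tt}$ does not spoil the parabolic-type estimate, i.e.\ that the $u^p_{tt}$-generated entry is genuinely higher order. The delicate point is that $a_{pp}$ a priori could contain a term of size $|u^p_x|$ (not integrable in time against $w_p^2$ without a good weight, since $\lambda_p\equiv 0$ gives no shock-type dissipation), and only the cancellation forced by $\lambda_p(u^p)\equiv 0$ together with $\nabla r_p\cdot r_p\equiv 0$ and the heat relation $\rho_t=a^2\rho_{xx}$ reduces it to $|u^p_x|^2$; getting this cancellation cleanly requires differentiating $u^p=u(\rho(x,t))$ carefully and invoking $\lambda_p(u(\rho))\equiv 0\Rightarrow \nabla\lambda_p(u^p)\cdot r_p(u^p)\equiv 0$. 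Once $a_{pp}$ is shown to be $O(1)\delta^{-1/2}|u^p_x|^2$ plus genuinely higher-order pieces, the remaining bookkeeping — splitting $A$, bounding off-diagonal and GNL-diagonal entries by $|u^j_x|$-weights, and estimating the $tt$-part by $\varepsilon_0\delta(1+t)^{-3/2}$ — is routine Cauchy–Schwarz and an application of Lemmas \ref{lemmacontact}, \ref{lemshock}, \ref{leme}, and the a priori assumption \eqref{pa}.
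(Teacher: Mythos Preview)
Your overall plan—expand $A$, isolate the contact-wave contribution, and use the structural conditions to reduce the dangerous $|u^p_x|$ term to $|u^p_x|^2$—is in the right spirit, but the diagonal/off-diagonal decomposition misses the mechanism the paper actually uses and leaves a genuine gap.

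The paper does \emph{not} split $A$ by diagonal versus off-diagonal. Instead it first uses the equation \eqref{uaequ} satisfied by $u^a$ to replace $u^a_t-a^2u^a_{xx}+f'(u^a)u^a_x+u^a_{tt}$ by $E_{1x}+E_{2x}$, so that
\[
a_{ij}=l_i(u^a)\nabla r_j(u^a)\big\{E_{1x}+E_{2x}+[\lambda_i(u^a)\mathbb{I}-f'(u^a)]u^a_x\big\}+O\big(|u^a_x|^2+|u^a_t|^2\big).
\]
The remaining first-order piece is then split over the waves $k$: for $k\neq p$ the shock ODE \eqref{shequ} gives $[\lambda_i(u^k)-s_k]u^k_x-(a^2-s_k^2)u^k_{xx}=O(\delta)|u^k_x|$; for $k=p$ one writes $[\lambda_i(u^p)\mathbb{I}-f'(u^p)]r_p(u^p)\rho_x=\lambda_i(u^p)r_p(u^p)\rho_x$, and then $l_i(u^a)\nabla r_j(u^p)\,\lambda_i(u^p)r_p(u^p)$ vanishes whenever $i=p$ (since $\lambda_p(u^p)=0$) \emph{or} $j=p$ (since $\nabla r_p\cdot r_p=0$). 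This yields the single uniform bound
\[
|a_{ij}|\le O(1)\Big(\delta\!\sum_{k\neq p}|u^k_x|+|u^p_x|\,\chi_{\{i,j\neq p\}}+|u^p_x|^2+|\rho_{xxt}|+e\Big),
\]
from which \eqref{I1} follows by Cauchy--Schwarz on $|w_iw_j|$.

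The gap in your proposal is twofold. First, your bound for $A^{\rm off}$ is only $O(1)\sum_j|u^j_x|$, with no $\delta$-prefactor on the shock pieces; after $|w_iw_j|\le\tfrac12(w_i^2+w_j^2)$ the $k=i$ term leaks an $O(1)|u^i_x|w_i^2$ contribution that is \emph{not} dominated by the $O(1)\delta^{1/2}\sum_{i\neq p}|u^i_x|w_i^2$ allowed in \eqref{I1}. The traveling-wave cancellation you invoke only for the GNL diagonal must be used for \emph{every} entry, and the PDE substitution above is the clean way to obtain it. Second, the contact-wave cancellation is not confined to $a_{pp}$: the factor $\chi_{\{i,j\neq p\}}$ shows that the $|u^p_x|$ contribution vanishes on the entire $p$-th row and $p$-th column of $A$, not merely on the diagonal entry; your $A^{\rm LD}/A^{\rm off}$ split hides this. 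Two minor corrections: the $\delta^{-1/2}$ in front of $\int|u^p_x|^2w_p^2\,dx$ is pure slack here (the weights $\alpha_i$ are $O(1)$ and contribute no factor $m$), and the $(1+t)^{-3/2}$ term arises from $E_{2x}=a^2r_p(u^p)\rho_{xxt}$ via $\|W\|_{L^\infty}^2\int_\mathbb{R}|\rho_{xxt}|\,dx$, not from $R(u^a)_{tt}$ as a separate block.
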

\begin{proof}
From the definition of $A$ in \eqref{B} and of $\rm{I}_1$ in \eqref{i1i2}, we have
\begin{equation}\label{i1sum}
\begin{split}
{\rm{I}}_1 = -  \int_\mathbb{R}  \sum_{i,j=1}^n w_i\alpha_i a_{ij}w_j dx
\leqslant O(1) \int_\mathbb{R}  \sum_{i,j=1}^n |a_{ij}| | w_iw_j | dx.
\end{split}
\end{equation}
First, we will give a detailed computation on $a_{ij}$. In view of \eqref{uaequ}, we can get
\begin{align*}
a_{ij}
&= l_i(u^a) \big[r_j(u^a)_t - a^2 r_j(u^a)_{xx} + \lambda_i(u^a) r_j(u^a)_x  + r_j(u^a)_{tt}\big]\nonumber\\
&= l_i(u^a) \nabla r_j(u^a) \big[u_t^a - a^2 u^a_{xx} + \lambda_i(u^a) u^a_x + u^a_{tt}\big]
- l_i(u^a) \nabla^2 r_j(u^a) \big[(u^a_x, u^a_x) - (u^a_t, u^a_t)\big]\nonumber\\
&\leqslant l_i(u^a) \nabla r_j(u^a) \big\{E_{1x} + E_{2x}
+ \big[\lambda_i(u^a)\mathbb{I} - f'(u^a)\big] u^a_x\big\}
+ O(1) \big( |u^a_x|^2 + |u^a_t|^2\big).
\end{align*}
The spatial derivative of $E_1$ gives
\begin{equation}\label{E1x}
E_{1x} = f(u^a)_x - \sum_{i=1}^n f(u^i)_x
= \sum_{i=1}^n \big[f'(u^a) - f'(u^i)\big] u^i_x
\leqslant O(1)e,
\end{equation}
where $E_1$ is defined in \eqref{E1E2} and we have used \eqref{E1} which will be used repeatedly in the next steps and will not
be mentioned. According to the structural condition \eqref{sc},
%which also will be used
%repeatedly and will not be mentioned in the proof of this lemma,
a detailed computation for $E_{2x}$ gives
\begin{equation}\label{E2x}
E_{2x} = a^2 u^p_{xxt}
=a^2 \big(u^p_x\big)_{xt}
=a^2 \big[r_p(u^p) \rho_x\big]_{xt}
= a^2 \big[r_p(u^p) \rho_{xx}\big]_t
=a^2 r_p(u^p) \rho_{xxt},
\end{equation}
then, for the term including $E_{2x}$, we have
\begin{equation*}\label{e2x1}
\begin{split}
l_i(u^a) \nabla r_j(u^a) E_{2x}
= a^2  l_i(u^a) \nabla r_j(u^a) r_p(u^p) \rho_{xxt}
\leqslant O(1) |\rho_{xxt}|,
\end{split}
\end{equation*}
where $E_2$ is also defined in \eqref{E1E2}. So far, the estimate for $a_{ij}$ reads
\begin{equation}\label{bij1}
a_{ij}
\leqslant  l_i(u^a) \nabla r_j(u^a) \big[\lambda_i(u^a)\mathbb{I} - f'(u^a)\big] u^a_x
+ O(1) \big( e + |u^a_x|^2 + |u^a_t|^2 + |\rho_{xxt}| \big).
\end{equation}
Next, we will derive the estimate for the first term in \eqref{bij1}.
A direct calculation gives
\begin{align*}
&l_i(u^a) \nabla r_j(u^a) \big[\lambda_i(u^a)\mathbb{I} - f'(u^a)\big] u^a_x\nonumber\\
=& \sum_{k=1}^n l_i(u^a) \nabla r_j(u^a)\Big\{ \big[\lambda_i(u^k)\mathbb{I} - f'(u^k)\big]
+ \big[\lambda_i(u^a) - \lambda_i(u^k)\big]
- \big[f'(u^a) - f'(u^k)\big] \Big\} u^k_x\nonumber\\
\leqslant& \sum_{k=1}^n l_i(u^a) \nabla r_j(u^a) \big[\lambda_i(u^k)\mathbb{I} - f'(u^k)\big] u^k_x
+ O(1) e.
\end{align*}
The first term in the above inequlity will be estimated from two sides. When $k\neq p$, considering the equation \eqref{shequ} satisfied by
the smooth traveling wave, we have
\begin{align*}
l_i(u^a) \nabla r_j(u^a) \big[\lambda_i(u^k)\mathbb{I} - f'(u^k)\big] u^k_x
=& l_i(u^a) \nabla r_j(u^a)
\Big\{ \big[\lambda_i(u^k) - s_k\big] u^k_x - (a^2 - s_k^2) u^k_{xx} \Big\}\\
\leqslant &
O(1) \big( |\lambda_i(u^k) - s_k||u^k_x|
+ |u^k_{xx}|\big)
\leqslant O(1)\delta |u^k_x|.
\end{align*}
When $k=p$, considering the relaxation contact wave speed is zero and
$u^p_x=r_p(u^p)\rho_x$, we have
\begin{align*}
l_i(u^a) \nabla r_j(u^a) \big[\lambda_i(u^p)\mathbb{I} - f'(u^p)\big] u^p_x
=&\lambda_i(u^p) l_i(u^a) \nabla r_j(u^a) u^p_x\\
=& \lambda_i(u^p) l_i(u^a)
\Big\{\nabla r_j(u^p) + \big[\nabla r_j(u^a) - \nabla r_j(u^p)\big]\Big\} r_p(u^p)\rho_x\\
\leqslant&
O(1) \big(
|u^p_x| \chi_{\{i,j \neq p\}}
+e
\big),
\end{align*}
where \eqref{sc} has been used again. Then, by the above two inequalities, we have
\begin{equation}\label{f1}
\begin{split}
l_i(u^a) \nabla r_j(u^a) \big[\lambda_i(u^a)\mathbb{I} - f'(u^a)\big] u^a_x
\leqslant O(1)\bigg(
\delta \sum_{k=1 \atop k \neq p}^n |u^k_x|
+|u^p_x| \chi_{\{i,j \neq p\}}
+ e \bigg).
\end{split}
\end{equation}
By the definition \eqref{ua} of $u^a$, one can get
\begin{equation}\label{uax2}
|u^a_x|^2 \leqslant O(1) \sum_{k=1}^n |u^k_x|^2
= O(1) \bigg( \sum\limits_{k=1 \atop k \neq p}^n |u^k_x|^2 + |u^p_x|^2 \bigg)
\leqslant O(1) \bigg(\delta \sum\limits_{k=1 \atop k \neq p}^n |u^k_x| + |u^p_x|^2  \bigg).
\end{equation}
Noticing  $|u^a_t|\leqslant O(1) |u^a_x|$, substitute \eqref{f1} and \eqref{uax2}
into \eqref{bij1}, we obtain
\begin{equation}\label{bij3}
\begin{split}
|a_{ij}|
\leqslant O(1)\bigg(
\delta \sum_{k=1 \atop k \neq p}^n |u^k_x|
+|u^p_x| \chi_{\{i,j \neq p\}}
 + |u^p_x|^2 + |\rho_{xxt}|+ e
\bigg).
\end{split}
\end{equation}
On the basis of \eqref{i1sum} and \eqref{bij3}, we have
\begin{equation}\label{i1bij}
\begin{split}
{\rm{I}}_1
\leqslant O(1) \int_\mathbb{R}  \sum_{i,j=1}^n
\bigg(\delta \sum_{k=1 \atop k \neq p}^n |u^k_x|
+|u^p_x| \chi_{\{i,j \neq p\}}
+ |u^p_x|^2 + |\rho_{xxt}|+ e\bigg) | w_iw_j | dx.
\end{split}
\end{equation}
Next, we will calculate the terms of the integrand in \eqref{i1bij}. Applying the
Cauchy-Schwartz inequality, we can get
\begin{equation*}
\begin{split}
\delta \sum_{i,j,k=1 \atop k \neq p}^n |u^k_x||w_iw_j|
\leqslant O(1) \delta \sum_{i,j,k=1 \atop k \neq p}^n |u^k_x| (w_i^2 + w_j^2)
\leqslant O(1) \bigg(\sum_{i,j=1 \atop j \neq p,i}^n |u^j_x| w_i^2
+ \delta^{1/2} \sum_{i=1 \atop i\neq p}^n |u^i_x| w_i^2\bigg).
\end{split}
\end{equation*}
Similarly, we also have
\begin{equation*}
\begin{split}
&\sum_{i,j=1}^n \big(
|u^p_x| \chi_{\{i,j \neq p\}}
+ |u^p_x|^2 \big)|w_iw_j|
\leqslant O(1) \bigg(\sum_{i=1 \atop i\neq p}^n |u^p_x| w_i^2
+ \delta ^{-1/2} |u^p_x|^2 w_p^2\bigg)\end{split}.
\end{equation*}
The a priori assumption \eqref{pa} yields the fact
$\|W\|_{L^\infty} \leqslant O(1) \|W\|_1 \leqslant O(1) \varepsilon_0$.
Using this fact, we can get
\begin{equation}\label{rhoxxt}
\int_\mathbb{R}\sum_{i,j=1}^n |\rho_{xxt}||w_iw_j|dx
\leqslant O(1)\varepsilon_0^2 \int_\mathbb{R}|\rho_{xxt}|dx
\leqslant  O(1) \varepsilon_0 \delta (1+t)^{-{3}/{2}}.
\end{equation}
Thus we finish the proof by the last three inequalities and \eqref{i1bij}.
\end{proof}

\begin{lemma}\label{lemI2}
Under the same assumptions as in
Theorem $\ref{mainthm}$, it holds that
\begin{equation}\label{I2}
\begin{split}
\rm{I}_2 =&  \int_\mathbb{R}  W^T \alpha B dx
\leqslant
O(1) \int_\mathbb{R} \sum_{i,j = 1\atop j \neq i}^n
|u^j_x| w_i^2 dx
+ O(1) \delta^{{1}/{2}} \int_\mathbb{R}\sum_{i=1 \atop i \neq p}^n |u^i_x| w_i^2 dx\\
&+O(1) \delta^{-{1}/2} \int_\mathbb{R} |u^p_x|^2 w_p^2 dx
+ O(1) \big(\delta^{{1}/{4}} + \varepsilon_0\big)
\big(\|W_x\|^2 + \|W_t\|^2\big)\\
&+ O(1) \bigg[\int_\mathbb{R} e |W| dx
+\varepsilon_0 \delta (1+t)^{-{5}/{4}}\bigg].
\end{split}
\end{equation}
Here the error term $e$, the vector $B$ and $\rm{I}_2$ are given by $(\ref{ee})$, \eqref{D}
and \eqref{i1i2} respectively.
\end{lemma}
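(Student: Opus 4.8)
The plan is to estimate $\rm{I}_2 = \int_\mathbb{R} W^T \alpha B \, dx$ by expanding the vector $B$ according to its definition \eqref{D}, namely $B = L(u^a)\big[2a^2 R(u^a)_x W_x - E_1 - E_2 + Q - 2 R(u^a)_t W_t\big]$, and handling each of the five contributions separately. Since the weight matrix $\alpha$ is bounded above and below by \eqref{alphaiproperty} and $L(u^a)$, $R(u^a)$ and their derivatives are bounded (using strict hyperbolicity, Lemma \ref{lemmacontact}, Lemma \ref{lemshock}), the main task is to control the integrals $\int_\mathbb{R} |W| \cdot |R(u^a)_x| \cdot |W_x| \, dx$, $\int_\mathbb{R} |W| \cdot |R(u^a)_t| \cdot |W_t| \, dx$, $\int_\mathbb{R} |W| \cdot |E_1| \, dx$, $\int_\mathbb{R} |W| \cdot |E_2| \, dx$ and $\int_\mathbb{R} |W| \cdot |Q| \, dx$.

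For the first two terms, I would split $R(u^a)_x$ (and $R(u^a)_t$) into the sum over characteristic families, writing $R(u^a)_x = \sum_{k=1}^n \nabla R(u^a) u^k_x$. For the $k \neq p$ pieces I would use the smallness $|u^k_x| \leqslant O(1)\delta_k^2 \exp\{\cdots\}$ together with a Cauchy–Schwartz split of the form $(|u^k_x|^{1/2}|W|)(|u^k_x|^{1/2}|W_x|)$, producing a term $O(1)\delta^{1/2}\sum_{i \neq p}|u^i_x|w_i^2$ plus off-diagonal terms $\sum_{j \neq i}|u^j_x|w_i^2$ and a piece absorbed into $O(1)\delta^{1/4}(\|W_x\|^2 + \|W_t\|^2)$; for the $k = p$ piece I would peel off a factor $|u^p_x|$ at power, say $3/8$ or so, to balance against $\|W_x\|^2$ and retain $O(1)\delta^{-1/2}|u^p_x|^2 w_p^2$, mirroring exactly the splitting done in the proof of Lemma \ref{lemJ}. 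The $E_1$ term is immediately $\leqslant O(1)\int_\mathbb{R} e|W|\,dx$ by \eqref{E1}. The $E_2 = a^2 u^p_{xt}$ term is handled by Lemma \ref{lemmacontact}'s bound $|u^p_{xt}| \leqslant O(1)\delta_p(1+t)^{-3/2}\exp\{-x^2/(16a^2(1+t))\}$; integrating $\exp\{-cx^2/(1+t)\}$ in $x$ gives a factor $(1+t)^{1/2}$, and using $\|W\|_{L^\infty} \leqslant O(1)\varepsilon_0$ from the a priori assumption \eqref{pa} yields a bound $O(1)\varepsilon_0 \delta (1+t)^{-1}$, or after an additional Cauchy–Schwartz one gets the $(1+t)^{-5/4}$ rate claimed. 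Finally the quadratic term $Q$ with $|Q| \leqslant O(1)|\Phi_x|^2 = O(1)|\phi|^2$ gives $\int_\mathbb{R}|W||Q|\,dx \leqslant O(1)\|W\|_{L^\infty}\|\Phi_x\|^2 \leqslant O(1)\varepsilon_0 \|W_x\|^2$, which is absorbed into the $O(1)(\delta^{1/4} + \varepsilon_0)(\|W_x\|^2 + \|W_t\|^2)$ term.

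The main obstacle I anticipate is the bookkeeping around the term $2a^2 R(u^a)_x W_x$ (and the analogous $W_t$ term): one must carefully choose the powers in the Cauchy–Schwartz split so that the contact-wave contribution $|u^p_x|^{\theta}|W_x|$ leaves behind exactly $\delta^{-1/2}|u^p_x|^2 w_p^2$ (which will later be controlled by Lemma \ref{gh}) while the leftover $|W_x|$ piece carries only a coefficient $\delta^{1/4}$ or $\varepsilon_0$, small enough to be absorbed by the positive dissipation terms $a^2\|\alpha^{1/2}W_x\|^2$ in \eqref{djgj1} once the $W_{tt}$-related sign issues are resolved; and one must track that the index restrictions (such as $\chi_{\{i,j\neq p\}}$) from $\nabla r_j \cdot r_p$ interacting with $\lambda_p(u^p) = 0$ are consistent with the claimed right-hand side. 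A secondary subtlety is ensuring the $W_t$ terms, coming from the wave-equation structure of \eqref{lo6}, do not produce an uncontrollable $\|W_t\|^2$ with a large constant; this is why $R(u^a)_t$ must be estimated with the same care as $R(u^a)_x$, exploiting $|u^a_t| \leqslant O(1)|u^a_x|$ and the decay estimates from Lemmas \ref{lemmacontact} and \ref{lemshock}. Once each of the five contributions is bounded as above, summing them and collecting like terms produces exactly \eqref{I2}.
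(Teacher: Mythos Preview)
Your proposal is correct and follows essentially the same approach as the paper: expand $B$ into its five constituents and bound each via Cauchy--Schwartz, the decay estimates of Lemmas~\ref{lemmacontact}--\ref{lemshock}, and the a~priori bound~\eqref{pa}. Two places where the paper's execution is simpler than what you outline: for the $R(u^a)_x W_x$ (and $R(u^a)_t W_t$) term the paper does not split family-by-family or use fractional powers as in Lemma~\ref{lemJ}, but applies a single weighted Cauchy--Schwartz $|W|\,|u^a_x|\,|W_x|\leqslant \delta^{-1/2}|u^a_x|^2|W|^2+\delta^{1/2}|W_x|^2$ and then invokes $|u^k_x|^2\leqslant O(1)\delta|u^k_x|$ for $k\neq p$ to produce the $\delta^{1/2}$ prefactor on the diagonal shock terms; and for $Q$ the paper keeps the extra $|u^a_x W|^2$ term coming from $\Phi_x=R(u^a)_xW+R(u^a)W_x$ (your shortcut $\|\Phi_x\|^2\leqslant O(1)\|W_x\|^2$ drops this piece), which is then absorbed by the same $|u^a_x|^2$ expansion.
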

\begin{proof}
From the definition of $B$ in \eqref{D} and of $\rm{I}_2$ in \eqref{i1i2}, we have
\begin{equation}\label{i2sum}
\begin{split}
{\rm{I}}_2 =
\int_\mathbb{R}  W^T \alpha
L(u^a) \big[2 a^2 R(u^a)_x W_x - E_1 - E_2 + Q - 2 R(u^a)_t W_t\big]dx.
\end{split}
\end{equation}
Next, we will estimate the integrand in \eqref{i2sum} as follows. First, by using the Cauchy-Schwartz inequality and  \eqref{uax2}, we can get
\begin{equation*}\label{i2x}
\begin{split}
W^T \alpha L(u^a) R(u^a)_x W_x
& \leqslant O(1) |W||u^a_x||W_x|
\leqslant O(1)\big( \delta^{-{1}/{2}} |W|^2 |u^a_x|^2
+ \delta^{{1}/{2}} |W_x|^2\big) \\
&\leqslant O(1)  \bigg(
\sum_{i,j = 1\atop j \neq i}^n |u^j_x| w_i^2
+\delta^{{1}/{2}}\sum_{i=1 \atop i \neq p}^n |u^i_x| w_i^2
+ \delta^{-{1}/2}  |u^p_x|^2 w_p^2
+ \delta^{{1}/{2}}|W_x|^2\bigg).
\end{split}
\end{equation*}
Then with a similar argument it comes to
\begin{equation*}
W^T \alpha L(u^a) R(u^a)_t W_t
\leqslant O(1) \bigg(
\sum_{i,j = 1\atop j \neq i}^n |u^j_x| w_i^2
+ \delta^{{1}/{2}} \sum_{i=1 \atop i \neq p}^n |u^i_x| w_i^2
+ \delta^{-{1}/2}  |u^p_x|^2 w_p^2
+ \delta^{{1}/{2}}|W_t|^2 \bigg).
\end{equation*}
Here we have used $|u^a_t|\leqslant O(1) |u^a_x|$ again.
According to \eqref{Q} and \eqref{WPhi}, we can get
\begin{equation}\label{Qinequ}
|Q| \leqslant O(1) |\Phi_x|^2
= O(1)|R(u^a)_x W + R(u^a) W_x|^2
\leqslant O(1) \big(|u^a_x W|^2 + |W_x|^2 \big),
\end{equation}
then, the fact mentioned in the previous lemma and \eqref{uax2} yields
\begin{align*}
W^T \alpha L(u^a)Q
\leqslant O(1) \bigg(
\sum_{i,j = 1\atop j \neq i}^n |u^j_x| w_i^2
+ \delta^{{1}/{2}} \sum_{i=1 \atop i \neq p}^n |u^i_x| w_i^2
+ \delta^{-{1}/2}  |u^p_x|^2 w_p^2
+ \varepsilon_0 |W_x|^2\bigg).
\end{align*}
From \eqref{E1}, we know that $E_1$ is controlled
by the error term $e$ which will be estimated later. For the term $E_2$, the detailed computation in \eqref{E2x} gives
\begin{equation*}
E_2 = a^2 u^p_{xt}
= a^2 \big(u^p_x\big)_t
=a^2 r_p(u^p)\rho_{xt}.
\end{equation*}
Then, the a priori assumption \eqref{pa} and the H$\ddot{o}$lder inequality yields
\begin{align*}
- \int_\mathbb{R} W^T \alpha L(u^a) E_2dx
&= - a^2 \int_\mathbb{R} W^T \alpha L(u^a)  r_p(u^p)\rho_{xt}
\leqslant O(1)  \int_\mathbb{R}|W||\rho_{xt}| dx\\
&\leqslant O(1) \|W\| \|\rho_{xt}\|
\leqslant O(1)\varepsilon_0 \delta (1+t)^{-{5}/{4}}.
\end{align*}
Thus, we can obtain the estimate \eqref{I2} by the above inequalities.
\end{proof}

By using the fact introduced in Lemma \ref{lemI1} on the term involving $e$, it concludes from Lemma \ref{lemJ} to Lemma \ref{lemI2} that $(\ref{djgj1})$
becomes the following inequality which is stated in the next proposition.
\begin{proposition}\label{proploestep1}
Under the same assumptions as in
Theorem $\ref{mainthm}$, it holds that
\begin{equation}\label{ji1i2}
\begin{split}
&\frac{1}{2}\frac{d}{dt} \int_\mathbb{R}  \sum_{i=1}^n
\big(
\alpha_i w_i^2 +2 \alpha_i w_i w_{it}
\big) dx
+O(1) m \int_\mathbb{R}\sum_{i,j = 1 \atop j \neq i}^n |u^j_x| w_i^2 dx
+ a^2 \int_\mathbb{R} \sum_{i=1}^n \alpha_i   w_{ix}^2 dx \\
&\quad+ \frac{1}{4} \int_\mathbb{R} \sum_{i=1 \atop i \neq p}^n\alpha_i |\lambda_i(u^i)_x| w_i^2 dx
- \int_\mathbb{R} \sum_{i=1}^n \alpha_i  w_{it}^2dx
\leqslant
O(1) \delta^{-{1}/{2}} \int_\mathbb{R} |u^p_x|^2 w_p^2 dx\\
&\quad+ O(1)\big(\delta^{{1}/{4}} + \varepsilon_0\big ) \big(\|W_x\|^2 + \|W_t\|^2\big)
+ O(1) \bigg[\int_\mathbb{R} e |W|dx
+ \varepsilon_0 \delta (1+t)^{-{5}/{4}}\bigg].
\end{split}
\end{equation}
Here the error term $e$, $\varepsilon_0$ and $\alpha_i$ are given by
\eqref{ee}, \eqref{pa} and \eqref{alphai} respectively.
\end{proposition}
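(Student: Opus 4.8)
The plan is to substitute the three preceding lemmas into the basic energy identity \eqref{djgj1} and then to absorb the resulting error terms using the smallness of $\delta$ and $\varepsilon_0$. First I would move the term $\mathrm{J}$ in \eqref{djgj1} to the right-hand side and replace $-\mathrm{J}$, $\mathrm{I}_1$ and $\mathrm{I}_2$ by the bounds of Lemma \ref{lemJ}, Lemma \ref{lemI1} and Lemma \ref{lemI2}. The right-hand side then consists of: the interaction terms $-O(1)m\int_\mathbb{R}\sum_{j\neq i}|u^j_x|w_i^2\,dx$ (from $-\mathrm{J}$) and $+O(1)\int_\mathbb{R}\sum_{j\neq i}|u^j_x|w_i^2\,dx$ (from $\mathrm{I}_1+\mathrm{I}_2$); the shock-dissipation terms $-\frac12\int_\mathbb{R}\sum_{i\neq p}\alpha_i|\lambda_i(u^i)_x|w_i^2\,dx$ (from $-\mathrm{J}$) and $+O(1)\delta^{1/2}\int_\mathbb{R}\sum_{i\neq p}|u^i_x|w_i^2\,dx$ (from $\mathrm{I}_1+\mathrm{I}_2$); the contact term $O(1)\delta^{-1/2}\int_\mathbb{R}|u^p_x|^2w_p^2\,dx$; the derivative terms $O(1)(\delta^{1/4}+\varepsilon_0)(\|W_x\|^2+\|W_t\|^2)$; the error terms $O(1)\int_\mathbb{R}e|W|^2\,dx+O(1)\int_\mathbb{R}e|W|\,dx$; and the decaying terms $O(1)\varepsilon_0\delta(1+t)^{-3/2}+O(1)\varepsilon_0\delta(1+t)^{-5/4}$.

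Next I would carry out the absorptions. Since $m=\delta^{-1/2}\to\infty$ as $\delta\to0$, the interaction term coming from $-\mathrm{J}$ dominates the one coming from $\mathrm{I}_1+\mathrm{I}_2$, so a negative multiple of $\int_\mathbb{R}\sum_{j\neq i}|u^j_x|w_i^2\,dx$ of size $O(1)m$ survives; transferring it to the left-hand side yields the good term $O(1)m\int_\mathbb{R}\sum_{j\neq i}|u^j_x|w_i^2\,dx$. For the shock dissipation, I would invoke Lemma \ref{lemshock}(2) together with the genuine nonlinearity of the $i$-th field for $i\neq p$, which makes $|u^i_x|$ and $|\partial_x\lambda_i(u^i)|$ comparable for the $i$-th relaxation shock; combined with $\alpha_i\geqslant 1-C\delta^{1/2}$ from \eqref{alphaiproperty}, this gives $O(1)\delta^{1/2}\int_\mathbb{R}\sum_{i\neq p}|u^i_x|w_i^2\,dx\leqslant O(1)\delta^{1/2}\int_\mathbb{R}\sum_{i\neq p}\alpha_i|\lambda_i(u^i)_x|w_i^2\,dx$, so that for $\delta$ small the retained coefficient of the dissipation is at least $\frac12-O(1)\delta^{1/2}\geqslant\frac14$, producing on the left-hand side the term $\frac14\int_\mathbb{R}\sum_{i\neq p}\alpha_i|\lambda_i(u^i)_x|w_i^2\,dx$. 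The contact and derivative terms are kept on the right-hand side unchanged; the error terms are simplified via $\int_\mathbb{R}e|W|^2\,dx\leqslant\|W\|_{L^\infty}\int_\mathbb{R}e|W|\,dx\leqslant O(1)\varepsilon_0\int_\mathbb{R}e|W|\,dx$, which is the same fact $\|W\|_{L^\infty}\leqslant O(1)\varepsilon_0$ used in the proof of Lemma \ref{lemI1}; and the decaying terms are combined using $(1+t)^{-3/2}\leqslant(1+t)^{-5/4}$. Collecting everything gives exactly \eqref{ji1i2}.

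I expect the only delicate step to be the shock-dissipation absorption: one must check that the factor $\delta^{1/2}$ in front of $\int_\mathbb{R}\sum_{i\neq p}|u^i_x|w_i^2\,dx$ — produced by the weighted cross terms in $\mathrm{I}_1$ and $\mathrm{I}_2$ — is small enough to be swallowed by the dissipation $\frac12\int_\mathbb{R}\sum_{i\neq p}\alpha_i|\lambda_i(u^i)_x|w_i^2\,dx$ coming from $\mathrm{J}$, which forces the use of the comparison between $|u^i_x|$ and $|\partial_x\lambda_i(u^i)|$ and hence of strict hyperbolicity and genuine nonlinearity of the non-contact fields. Everything else is routine absorption exploiting the smallness of $\delta$ and $\varepsilon_0$ and the form of the error term $e$ in \eqref{ee}.
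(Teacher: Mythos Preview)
Your proposal is correct and follows essentially the same route as the paper: the paper's proof is a single sentence saying that one combines Lemmas \ref{lemJ}--\ref{lemI2} with \eqref{djgj1} and handles the $e|W|^2$ terms via the bound $\|W\|_{L^\infty}\leqslant O(1)\varepsilon_0$ introduced in Lemma \ref{lemI1}. You have simply written out the absorptions explicitly --- the $m$ versus $O(1)$ competition in the interaction terms, the comparison $|u^i_x|\sim|\lambda_i(u^i)_x|$ from Lemma \ref{lemshock}(2) for the shock-dissipation absorption, and the reduction of $e|W|^2$ to $e|W|$ --- all of which are the intended steps.
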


\subsubsection{Estimate for the negative term}
\quad ~
To control the negative term in \eqref{ji1i2}, we multiply $(\ref{lo6})$ by $2W_t^T$ and integrate on $\mathbb{R}$ with respect to $x$ to obtain
\begin{equation}\label{los1}
\begin{split}
&\frac{d}{dt}  \int_\mathbb{R}  \sum_{i=1}^n
\big(a^2 w_{ix}^2 + w_{it}^2\big) dx
+2 \int_\mathbb{R} \sum_{i=1}^n \big[w_{it}^2
+\lambda_i(u^a) w_{it} w_{ix}\big] dx
= \rm{I}_3 + \rm{I}_4,
\end{split}
\end{equation}
where
\begin{equation}\label{i3i4}
 {\rm{I}_3} =  - 2\int_\mathbb{R}  W_t^T A W dx,\quad\quad
 {\rm{I}_4} = 2\int_\mathbb{R}  W_t^T B dx,
\end{equation}
where $A$ and $B$ are defined by \eqref{B} and \eqref{D}.
Next, we will derive the estimate for $\rm{I}_3$ and $\rm{I}_4$, which is
similar to the argument on $\rm{I}_1$ and $\rm{I}_2$. From the definition
of $A$ in \eqref{B} and of $\rm{I}_3$ in \eqref{i3i4}, we have
\begin{equation}\label{i3sum}
{\rm{I}}_3 =
- 2\int_\mathbb{R} \sum_{i,j=1}^n w_{it} a_{ij} w_j dx
\leqslant  O(1)\int_\mathbb{R} \sum_{i,j=1}^n |a_{ij}|| w_{it} w_j| dx.
\end{equation}
From \eqref{bij3}, we can get
\begin{equation*}
\begin{split}
|a_{ij}|\leqslant O(1)\bigg( \sum_{k=1}^n |u^k_x| + |\rho_{xxt}|+ e\bigg).
\end{split}
\end{equation*}
Then \eqref{i3sum} comes to
\begin{equation}\label{i3bij}
{\rm{I}}_3
\leqslant O(1) \int_\mathbb{R} \sum_{i,j=1}^n
\bigg( \sum_{k=1}^n |u^k_x| + |\rho_{xxt}|+ e\bigg)
| w_{it} w_j| dx.
\end{equation}
Next, we will estimate the terms of the integrand in \eqref{i3bij}.
\begin{equation*}
\begin{split}
\sum_{i,j,k=1}^n |u^k_x|| w_{it} w_j|
\leqslant& O(1)\delta^{-1/2}\sum_{j,k=1}^n |u^k_x|^2| w_j|^2 + O(1)\delta^{{1}/{2}} |W_t|^2\\
\leqslant& O(1)\delta^{1/2}\bigg(
\sum_{i,j = 1 \atop j \neq i}^n|u^j_x| w_i^2
+ \sum_{i=1 \atop i \neq p }^n |u^i_x| w_i^2
+ \delta^{-1}|u^p_x|^2 w_p^2\bigg)+ O(1)\delta^{{1}/{4}} |W_t|^2.
\end{split}
\end{equation*}
Similar to Lemma \ref{lemI1}, we can get the fact $\|W_t\|_{L^\infty} \leqslant \varepsilon_0$, then the estimate about the term involving
$\rho_{xxt}$ in \eqref{i3bij} is the same as \eqref{rhoxxt}. Using this fact on the term
about $e$, we obtain
\begin{equation}\label{I3}
\begin{split}
\rm{I}_3
&\leqslant  O(1) \delta^{{1}/{2}}\int_\mathbb{R} \bigg( \sum_{i,j = 1 \atop j \neq i}^n
|u^j_x| w_i^2
+ \sum_{i=1 \atop i \neq p }^n |u^i_x| w_i^2 \bigg)dx
+ O(1) \delta^{-{1}/2} \int_\mathbb{R} |u^p_x|^2 w_p^2 dx
\\
&\quad
+ O(1)\delta^{{1}/{4}}\|W_t\|^2
+ O(1) \bigg[\int_\mathbb{R} e|W| dx + \varepsilon_0 \delta (1+t)^{-3/2} \bigg].
\end{split}
\end{equation}
Now we work on $\rm{I}_4$. From the definition
of $B$ in \eqref{D} and of $\rm{I}_4$ in \eqref{i3i4}, we have
\begin{equation}\label{I4}
\begin{split}
{\rm{I}_4} =& 2\int_\mathbb{R}  W_t^T B dx
=2\int_\mathbb{R}  W_t^T L(u^a)
\big[2 a^2 R(u^a)_x W_x - E_1 - E_2 + Q - 2 R(u^a)_t W_t\big]dx\\
\leqslant&
O(1) \delta^{{1}/{2}}\int_\mathbb{R}\bigg( \sum_{i,j = 1\atop j \neq i}^n
|u^j_x| w_i^2
+ \sum_{i=1 \atop i \neq p}^n |u^i_x| w_i^2 \bigg)dx
+O(1) \delta^{-{1}/2} \int_\mathbb{R} |u^p_x|^2 w_p^2 dx\\
&+ O(1) \big(\delta^{{1}/{4}} + \varepsilon_0\big) \big(\|W_x\|^2 + \|W_t\|^2\big)
+ O(1) \bigg[\int_\mathbb{R} e |W_t| dx
+\varepsilon_0 \delta (1+t)^{-{5}/{4}}\bigg].
\end{split}
\end{equation}
Here we have used
\begin{equation*}
\begin{split}
W_t^T L(u^a) \big[2 a^2 R(u^a)_x W_x + Q - 2R(u^a)_t W_t \big]
\leqslant O(1) (\delta + \varepsilon_0 )(|W_t|^2 + |W_x|^2 +|u^a_x|^2 |W|^2),
\end{split}
\end{equation*}
which is based on the Cauchy-Schwartz inequality and the fact
$\|W_t\|_{L^\infty} \leqslant \varepsilon_0$.
The last inequality together with \eqref{uax2} and the H$\ddot{o}$lder inequality
gives \eqref{I4}.
It concludes from \eqref{I3} and \eqref{I4} that \eqref{los1} becomes
the following inequality which is stated in the next proposition.
\begin{proposition}\label{proploestep2}
Under the same assumptions as in
Theorem $\ref{mainthm}$, it holds that
\begin{equation}\label{i3i4sum}
\begin{split}
&\frac{d}{dt}  \int_\mathbb{R}  \sum_{i=1}^n
\big(a^2 w_{ix}^2 + w_{it}^2\big) dx
+2 \int_\mathbb{R} \sum_{i=1}^n \big[w_{it}^2
+ \lambda_i(u^a) w_{it} w_{ix}\big] dx
\leqslant
O(1)\varepsilon_0 \delta (1+t)^{-{5}/{4}}\\
&\quad+O(1) \delta^{{1}/{2}} \int_\mathbb{R} \bigg(\sum_{i,j = 1\atop j \neq i}^n|u^j_x| w_i^2
+ \sum_{i=1 \atop i \neq p}^n |u^i_x| w_i^2 \bigg)dx
+O(1) \delta^{-{1}/2} \int_\mathbb{R} |u^p_x|^2 w_p^2dx\\
&\quad + O(1)\int_\mathbb{R} e (|W|+|W_t|) dx
+  O(1)\big(\delta^{{1}/{4}} + \varepsilon_0\big) \big(\|W_x\|^2 + \|W_t\|^2\big).
\end{split}
\end{equation}
Here the error term $e$ and $\varepsilon_0$ are given by
\eqref{ee} and \eqref{pa} respectively.
\end{proposition}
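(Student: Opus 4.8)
The plan is to run the energy estimate that produces first-order \emph{time} derivatives of $W$, since those are exactly what is needed to dominate the negative term $-\int\sum\alpha_i w_{it}^2$ left over in \eqref{ji1i2}: multiply the diagonalized wave system \eqref{lo6} by $2W_t^T$ and integrate over $\mathbb{R}$. Pairing $W_t^T$ against the terms of \eqref{lo6}, the $W_{tt}$-term gives $\frac{d}{dt}\int|W_t|^2$; the $-a^2W_{xx}$-term, after one integration by parts in $x$, gives $\frac{d}{dt}\,a^2\int|W_x|^2$; the relaxation term $W_t$ gives $2\int|W_t|^2$ and the convection term gives $2\int\sum\lambda_i(u^a)w_{it}w_{ix}$, both of which I would keep on the left-hand side (the indefinite cross term $\lambda_i w_{it}w_{ix}$ is not estimated here and is carried along, to be combined with other estimates later); the right-hand side becomes $I_3+I_4$ with $I_3=-2\int W_t^TAW$ and $I_4=2\int W_t^TB$. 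This is identity \eqref{los1}, and the whole task reduces to bounding $I_3$ and $I_4$.

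For $I_3$ I would reuse the pointwise bound on the entries $a_{ij}$ already established in \eqref{bij3}, i.e.\ $|a_{ij}|\leqslant O(1)\big(\sum_k|u^k_x|+|\rho_{xxt}|+e\big)$, and apply Cauchy--Schwarz to each product $|w_{it}w_j|$, splitting off a $\delta^{1/2}|W_t|^2$ and a $\delta^{-1/2}|u^k_x|^2|w_j|^2$. The point is that Lemma \ref{leme} (with Lemmas \ref{lemmacontact} and \ref{lemshock}) makes $\delta^{-1/2}|u^k_x|^2$ comparable to $\delta^{1/2}|u^k_x|$ for the genuinely nonlinear families $k\neq p$, while for $k=p$ it is the genuinely different quantity $\delta^{-1/2}|u^p_x|^2$ that survives as a separate term. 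The $\rho_{xxt}$ contribution is handled via the a priori bound \eqref{pa} and Sobolev embedding, which give $\|W\|_{L^\infty}+\|W_t\|_{L^\infty}\leqslant O(1)\varepsilon_0$, so that $\int|\rho_{xxt}||w_iw_j|\,dx\leqslant O(1)\varepsilon_0^2\int|\rho_{xxt}|\,dx\leqslant O(1)\varepsilon_0\delta(1+t)^{-3/2}$ exactly as in \eqref{rhoxxt}; the $e$-contribution is absorbed by the same $L^\infty$ control as in Lemma \ref{lemI1}. This yields estimate \eqref{I3}.

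For $I_4$ I would expand $B$ as in \eqref{D}. The pieces $R(u^a)_xW_x$, $R(u^a)_tW_t$ and $Q$ are treated by Cauchy--Schwarz together with $|u^a_t|\leqslant O(1)|u^a_x|$, the bound \eqref{Qinequ} on $Q$, and \eqref{uax2} for $|u^a_x|^2$, producing the wave-interaction and contact-weighted terms plus a harmless $O(1)(\delta^{1/4}+\varepsilon_0)(\|W_x\|^2+\|W_t\|^2)$; the term $E_1$ is dominated by $e$ through \eqref{E1}. The genuinely new object is $E_2=a^2u^p_{xt}=a^2r_p(u^p)\rho_{xt}$, which I would control by H\"older's inequality, $\|W_t\|\,\|\rho_{xt}\|\leqslant O(1)\varepsilon_0\delta(1+t)^{-5/4}$, using the Gaussian decay rates of Lemma \ref{lemmacontact}. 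Adding \eqref{los1}, \eqref{I3} and \eqref{I4}, and using $\varepsilon_0^2\leqslant\varepsilon_0$ and $(1+t)^{-3/2}\leqslant(1+t)^{-5/4}$, gives \eqref{i3i4sum}.

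The hard part is not the algebra but ensuring every temporal-derivative contamination is integrable in time and $\delta$-small. The $W_{tt}$-term intrinsic to the Jin--Xin system and, through $A$ and $B$, the quantities $\rho_{xxt}$ and $\rho_{xt}$ attached to the contact wave cannot be absorbed by the weighted dissipation; they must be paid for by pitting the a priori $L^\infty$ bounds on $W$ and $W_t$ against the precise heat-kernel decay of Lemma \ref{lemmacontact}, and the resulting rate has to be strictly better than $(1+t)^{-1}$ — here $(1+t)^{-5/4}$ — for the later summation of all the lower-order estimates to close. This is also exactly where the restriction to a single linearly degenerate family enters, since it is what allows one to write $u^p_x=r_p(u^p)\rho_x$ with $\rho$ solving a single scalar heat equation, making the sharp decay estimates for $\rho_{xt}$ and $\rho_{xxt}$ available.
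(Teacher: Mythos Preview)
Your proposal is correct and follows essentially the same route as the paper: multiply \eqref{lo6} by $2W_t^T$ to obtain the identity \eqref{los1}, then bound $I_3$ by reusing the pointwise estimate \eqref{bij3} on $a_{ij}$ together with a Cauchy--Schwarz split (with weights $\delta^{\pm 1/2}$) and the $L^\infty$ control from \eqref{pa} for the $\rho_{xxt}$ piece, and bound $I_4$ by expanding $B$ and treating $E_2$ via H\"older and the decay of $\rho_{xt}$ from Lemma~\ref{lemmacontact}. The only cosmetic difference is that the paper records the $|W_t|^2$ contribution in $I_3$ with the slightly weaker prefactor $\delta^{1/4}$ rather than $\delta^{1/2}$, which is harmless since $\delta$ is small.
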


Thus, adding \eqref{i3i4sum} to \eqref{ji1i2} yields
\begin{equation*}
\begin{split}
&\frac{1}{2}\frac{d}{dt} \int_\mathbb{R}  \sum_{i=1}^n
\Big[\big(
\alpha_i w_i^2 +2 \alpha_i w_i w_{it} + 2  w_{it}^2
\big)+2a^2 w_{ix}^2\Big] dx
+\frac{1}{8} \int_\mathbb{R}\bigg(\sum_{i,j = 1 \atop j \neq i}^n  |u^j_x| w_i^2
+ \sum_{i=1 \atop i \neq p}^n |u^i_x| w_i^2\bigg) dx\\
&~+  \int_\mathbb{R}  \sum_{i=1}^n \Big[
a^2 \alpha_i w_{ix}^2 + 2 \lambda_i(u^a) w_{it} w_{ix} + (2-\alpha_i)w_{it}^2\Big]dx
\leqslant O(1)\big(\delta^{{1}/{4}} + \varepsilon_0\big) \big(\|W_x\|^2 + \|W_t\|^2\big)\\
&~ + O(1)\delta^{-{1}/{2}} \int_\mathbb{R} |u^p_x|^2 w_p^2 dx
+ O(1) \bigg[\int_\mathbb{R} e (|W|+|W_t|) dx
+\varepsilon_0 \delta (1+t)^{-{5}/{4}}\bigg].
\end{split}
\end{equation*}
Because we can get the following two facts based on the Cauchy-Schwartz inequality,
the sub-characteristic condition \eqref{scc} and \eqref{alphaiproperty}
\begin{equation}\label{addineq1}
\alpha_i w_i^2 +2 \alpha_i w_i w_{it} + 2  w_{it}^2
\geqslant O(1) \big(w_i^2 + w_{it}^2\big),
\end{equation}
\begin{equation}\label{los3}
\begin{split}
a^2  \alpha_i w_{ix}^2
+   (2 - \alpha_i) w_{it}^2
+ 2 \lambda_i(u^a) w_{it} w_{ix}
\geqslant O(1)  \big(w_{ix}^2 +w_{it}^2\big) .
\end{split}
\end{equation}
So, on the basis of Proposition \ref{proploestep1} and Proposition \ref{proploestep2},
 we obtain
\begin{equation}\label{los9}
\begin{split}
&\frac{d}{dt} \big( \|W\|_1^2 + \|W_t\|^2\big)
+\frac{1}{8} \int_\mathbb{R}\bigg(\sum_{i,j = 1 \atop j \neq i}^n  |u^j_x| w_i^2
+ \sum_{i=1 \atop i \neq p}^n |u^i_x| w_i^2\bigg) dx
+\frac{1}{8} \big(\|W_x\|^2 + \|W_t\|^2\big)\\
&\leqslant O(1)\delta^{-{1}/{2}} \int_\mathbb{R} |u^p_x|^2 w_p^2 dx
+ O(1) \bigg[\int_\mathbb{R} e (|W|+|W_t|) dx
+\varepsilon_0 \delta (1+t)^{-{5}/{4}}\bigg].
\end{split}
\end{equation}
Integrating $(\ref{los9})$ from 0 to $t$ with respect to the temporal variable, one can get
\begin{equation}\label{los10}
\begin{split}
&\|W(t)\|_1^2 + \|W_t(t)\|^2
+\frac{1}{8}\int_0^t
\int_\mathbb{R}\bigg(\sum_{i,j = 1 \atop j \neq i}^n |u^j_x| w_i^2
+ \sum_{i=1 \atop i \neq p}^n |u^i_x| w_i^2\bigg) dxd\tau\\
&\quad+ \frac{1}{8} \int_0^t\big(\|W_x\|^2 +  \|W_\tau\|^2\big)d\tau
\leqslant  \|W(0)\|_1^2 + \|W_t(0)\|^2 + O(1) \varepsilon_0 \delta^{{1}/{2}}\\
&\quad+O(1) \delta^{-{1}/{2}} \int_0^t\int_\mathbb{R} |u^p_x|^2 w_p^2 dxd\tau
+ O(1) \int_0^t\int_\mathbb{R} e (|W| + |W_\tau|) dxd\tau.
\end{split}
\end{equation}
Here and thereafter $\|f(t)\|=\|f(\cdot,t)\|$ for any $t\geqslant 0$.
By the definition \eqref{ee} of $e$, the H$\ddot{o}$lder inequality and the a priori assumption \eqref{pa}, we can deal with the first term about $e$ as follows:
\begin{equation}\label{ew}
\begin{split}
&\int_0^t \int_\mathbb{R} e |W| dxd\tau
= O(1) \delta^2 \int_{t_0}^t \int_\mathbb{R}
\exp\big\{-c_0 \delta (\tau+|x|)\big\}|W| dxd\tau\\
&~ + O(1) \delta^2 \int_0^{t_0}  \int_\mathbb{R}
\bigg(
\exp\bigg\{-\frac{(x-x_p)^2}{8(1+\tau)}\bigg\}
+\sum\limits_{i=1\atop i\neq p}^n \exp\big\{-C\delta |x-s_i\tau-x_i|\big\}
\bigg)|W| dxd\tau\\
&\leqslant O(1) \delta^{{3}/{2}}
\int_{t_0}^t\exp\{-c_0\delta \tau\} \|W\| d\tau
+O(1)\varepsilon_0 \delta^2
\int_0^{t_0} \big[(1+\tau)^{{1}/{2}} + \delta^{-1}\big]d\tau\\
&\leqslant O(1)\varepsilon_0 \delta^{{1}/{2}}
+O(1) \varepsilon_0 \Big(\delta^2 t_0^{{3}/{2}} + \delta t_0\Big)
\leqslant O(1) \varepsilon_0 \delta_0^{{1}/{2}},
\end{split}
\end{equation}
where we have used the following fact in the last inequality
\begin{equation*}
t_0
\leqslant O(1) \|u(x,0)-\bar{u}(x,0)\|_{L^1}
\leqslant O(1)\delta_0^{-{1}/{2}},
\end{equation*}
this fact can be derived from the definition \eqref{t0} of $t_0$, \eqref{uubarinitial}
and \eqref{ii1}. The estimate on the first term about $e$ is exactly the same as \eqref{ew}, namely
\begin{equation}\label{ewt}
\int_0^t \int_\mathbb{R} e |W_\tau| dxd\tau
\leqslant O(1) \varepsilon_0 \delta_0^{{1}/{2}}.
\end{equation}
Substituting \eqref{ew} and \eqref{ewt} into \eqref{los10}, one can get
the following inequality which is stated in the next proposition.
\begin{proposition}\label{proploestep22}
Under the same assumptions as in
Theorem $\ref{mainthm}$, it holds that
\begin{equation}\label{loemissp}
\begin{split}
&\|W(t)\|_1^2 + \|W_t(t)\|^2
+\frac{1}{8}\int_0^t
\int_\mathbb{R}\bigg(\sum_{i,j = 1 \atop j \neq i}^n |u^j_x| w_i^2
+ \sum_{i=1 \atop i \neq p}^n |u^i_x| w_i^2\bigg) dxd\tau\\
&\quad+ \frac{1}{8} \int_0^t\big(\|W_x\|^2 +  \|W_\tau\|^2\big)d\tau
\leqslant  \|W(0)\|_1^2 + \|W_t(0)\|^2 + O(1) \varepsilon_0 \delta_0^{{1}/{2}}\\
&\quad+O(1) \delta^{-{1}/{2}} \int_0^t\int_\mathbb{R} |u^p_x|^2 w_p^2 dxd\tau.
\end{split}
\end{equation}
where $\varepsilon_0$ is the positive constant same as \eqref{pa}.
\end{proposition}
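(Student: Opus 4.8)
The plan is to derive Proposition~\ref{proploestep22} by combining the two energy estimates already in hand --- \eqref{ji1i2} from Proposition~\ref{proploestep1} and \eqref{i3i4sum} from Proposition~\ref{proploestep2} --- and then closing the time integration by controlling the error integrals. First I would add \eqref{i3i4sum} to \eqref{ji1i2}. On the right of \eqref{i3i4sum}, the terms $O(1)\delta^{1/2}\int_\mathbb{R}\sum_{i\ne j}|u^j_x|w_i^2\,dx$ and $O(1)\delta^{1/2}\int_\mathbb{R}\sum_{i\ne p}|u^i_x|w_i^2\,dx$ are absorbed into the dissipative terms coming from ${\rm J}$ on the left of \eqref{ji1i2}: the first because the latter carries the large factor $m=\delta^{-1/2}$, and the second because, for $i\ne p$, genuine nonlinearity of the $i$-th field together with strict hyperbolicity gives $|\lambda_i(u^i)_x|\ge c\,|u^i_x|$, so that $\frac{1}{4}\alpha_i|\lambda_i(u^i)_x|w_i^2$ dominates $O(1)\delta^{1/2}|u^i_x|w_i^2$ for $\delta$ small. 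What survives on the left is then $\frac{1}{2}\frac{d}{dt}\int_\mathbb{R}\sum_{i=1}^n\big[\alpha_iw_i^2+2\alpha_iw_iw_{it}+2w_{it}^2+2a^2w_{ix}^2\big]dx$, a fraction of $\int_\mathbb{R}\big(\sum_{i\ne j}|u^j_x|w_i^2+\sum_{i\ne p}|u^i_x|w_i^2\big)dx$, and the quadratic form $\int_\mathbb{R}\sum_{i=1}^n\big[a^2\alpha_iw_{ix}^2+2\lambda_i(u^a)w_{it}w_{ix}+(2-\alpha_i)w_{it}^2\big]dx$.

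Next I would use the two pointwise algebraic inequalities \eqref{addineq1} and \eqref{los3}. Both rely on the sub-characteristic condition \eqref{scc} and on \eqref{alphaiproperty}, by which each $\alpha_i$ lies within $C\delta^{1/2}$ of $1$: then \eqref{addineq1} makes the differentiated integrand comparable to $w_i^2+w_{it}^2$, so that --- with the $2a^2w_{ix}^2$ pieces --- the differentiated quantity is comparable to $\|W\|_1^2+\|W_t\|^2$, while \eqref{los3}, whose positive definiteness is exactly $\lambda_i(u^a)^2<a^2\alpha_i(2-\alpha_i)$ and hence a consequence of \eqref{scc}, bounds the last quadratic form below by a multiple of $\|W_x\|^2+\|W_t\|^2$. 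With this positivity secured, the residual $O(1)(\delta^{1/4}+\varepsilon_0)(\|W_x\|^2+\|W_t\|^2)$ on the right is absorbed once $\delta_0$ and $\varepsilon_0$ are small; this is \eqref{los9}, understood --- as is standard --- through a Lyapunov functional $\mathcal{E}(t)\simeq\|W\|_1^2+\|W_t\|^2$, so that the time-integrated form \eqref{los10} is the one actually used.

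Then I would integrate \eqref{los9} over $(0,t)$ to get \eqref{los10}, keeping $\|W(0)\|_1^2+\|W_t(0)\|^2$ on the right (finite by $W=L(u^a)\Phi$, $\Phi_t=-\psi$, and \eqref{ii1}), and estimate the two error integrals $\int_0^t\!\int_\mathbb{R}e\,|W|$ and $\int_0^t\!\int_\mathbb{R}e\,|W_\tau|$ via the explicit form \eqref{ee} of $e$, splitting the $\tau$-integral at $t_0$. For $\tau>t_0$ one has $e=O(1)\delta^2\exp\{-c_0\delta(\tau+|x|)\}$, with $x$-integral $O(1)\delta\,e^{-c_0\delta\tau}$; H\"older's inequality and $\|W\|\le O(1)\varepsilon_0$ give $O(1)\varepsilon_0\delta^{3/2}\int_{t_0}^te^{-c_0\delta\tau}d\tau=O(1)\varepsilon_0\delta^{1/2}$. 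For $\tau\le t_0$ the Gaussian term in \eqref{ee} has $x$-integral $O(1)(1+\tau)^{1/2}$ and each shock-type term $O(1)\delta^{-1}$; multiplying by $\|W\|_{L^\infty}\le O(1)\|W\|_1\le O(1)\varepsilon_0$ and integrating in $\tau$ gives $O(1)\varepsilon_0\delta^2(t_0^{3/2}+\delta^{-1}t_0)$, which is $\le O(1)\varepsilon_0\delta_0^{1/2}$ by the bound $t_0\le O(1)\|u_0(\cdot)-\bar{u}(\cdot,0)\|_{L^1}\le O(1)\delta_0^{-1/2}$ from \eqref{t0}, \eqref{uubarinitial} and \eqref{ii1}. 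The identical argument with $\|W_\tau\|_{L^\infty}\le O(1)\varepsilon_0$ gives \eqref{ewt}, and substituting \eqref{ew}, \eqref{ewt} into \eqref{los10} (using $\delta\le C\delta_0$) yields \eqref{loemissp}.

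I expect the error integral over $0<\tau\le t_0$ to be the main obstacle, since there $e$ retains the heat-kernel Gaussian whose exponent has no smallness in $\delta$, so only the smallness of the interaction time $t_0$ saves it --- which is precisely why the weighted $L^1$ quantity $\delta_0^{3/2}\|u_0(\cdot)-\bar{u}(\cdot,0)\|_{L^1}$ appears in \eqref{ii1}. I would also stress that the remaining term $O(1)\delta^{-1/2}\int_0^t\!\int_\mathbb{R}|u^p_x|^2w_p^2\,dxd\tau$ genuinely cannot be absorbed at this stage: it reflects the vanishing characteristic speed along the relaxation contact wave, for which the weighted energy method produces no compensating dissipation, and it is carried on the right of \eqref{loemissp} to be removed afterwards by the heat-kernel estimate of Lemma~\ref{gh}.
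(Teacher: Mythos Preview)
Your proposal is correct and follows essentially the same route as the paper: add \eqref{i3i4sum} to \eqref{ji1i2}, absorb the $O(1)\delta^{1/2}$ cross terms into the left via the factor $m=\delta^{-1/2}$ and genuine nonlinearity, invoke \eqref{addineq1}--\eqref{los3} (sub-characteristic condition plus \eqref{alphaiproperty}) to reach \eqref{los9}, integrate in time to get \eqref{los10}, and bound the $e$-integrals by the split at $t_0$ together with the $t_0\le O(1)\delta_0^{-1/2}$ bound from \eqref{t0}, \eqref{uubarinitial}, \eqref{ii1}. Your remark that \eqref{los9} is really a statement about a Lyapunov functional equivalent to $\|W\|_1^2+\|W_t\|^2$, and your identification of the $0<\tau\le t_0$ error as the reason for the weighted $L^1$ term in \eqref{ii1}, are exactly the points the paper relies on (though stated there more tersely).
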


\subsubsection{Application of the heat kernel estimate}
\quad~
At the end of the lower-order estimate, we will deal with the last term which has not yet been estimated in \eqref{loemissp} by introducing the following lemma.
\begin{lemma}\label{g}
Under the same assumptions as in
Theorem $\ref{mainthm}$, it holds that
\begin{equation*}
\begin{split}
&\int_0^t\int_\mathbb{R} |u^p_x|^2 w_p^2 dxd\tau
\leqslant
O(1) \delta^{{2}} \int_0^t\int_\mathbb{R} \bigg(
\sum_{i,j = 1\atop j \neq i}^n |u^j_x| w_i^2
+  \sum_{i=1 \atop i \neq p }^n |u^i_x| w_i^2\bigg)dxd\tau
+ O(1)\varepsilon_0\delta_0 \\
&+O(1) \delta^{{2}}  \bigg[\|w_p( t)\|^2 + \|w_{pt}(t)\|^2
+ \|w_p(0)\|^2 + \|w_{pt}(0)\|^2+ \int_0^t (\|W_x\|^2 + \|W_\tau\|^2)d\tau\bigg].
\end{split}
\end{equation*}
where $e$ is given by $(\ref{ee})$.
\end{lemma}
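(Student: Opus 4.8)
The plan is to control $|u^p_x|$ by the spatial derivative of the heat kernel $g$ from \eqref{hkg} and then apply Lemma \ref{gh} with $h=w_p$. By part 3) of Lemma \ref{lemmacontact} one has $|u^p_x|=\rho_x=O(1)\delta_p(1+\tau)^{-1/2}\exp\{-x^2/(4a^2(1+\tau))\}$, so once the constant $\gamma$ in \eqref{hkg} is fixed with $\gamma\leqslant 1/(4a^2)$ the pointwise bounds $|u^p_x|^2\leqslant O(1)\delta_p^2\,g_x^2\leqslant O(1)\delta^2\,g_x^2$ hold, the second by the same-order hypothesis \eqref{strength}; note also $|u^p_x|\leqslant O(1)\delta$. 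It therefore suffices to estimate $\int_0^t\int_\mathbb{R} w_p^2 g_x^2\,dxd\tau$. The hypotheses of Lemma \ref{gh} for $h=w_p$ hold by the regularity furnished by the a priori assumption \eqref{pa} and Proposition \ref{proploestep22} (in particular $w_{pt}\in L^2(0,t;L^2)\hookrightarrow L^2(0,t;H^{-1})$), and the lemma gives
\begin{equation*}
\int_0^t\int_\mathbb{R} w_p^2 g_x^2\,dxd\tau\leqslant 4\pi\|w_p(0)\|^2+\frac{4\pi}{\gamma}\int_0^t\|w_{px}\|^2 d\tau+8\gamma\int_0^t\langle w_{pt},w_p g^2\rangle_{H^{-1}\times H^1}d\tau .
\end{equation*}
Since $\gamma$ is a fixed constant the first two terms already fit the right-hand side of the statement once the overall factor $O(1)\delta^2$ is multiplied back in.

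Everything hinges on the last term. A direct integration by parts in $\tau$ merely reproduces $\int_0^t\int w_p^2 g_x^2$ with a coefficient $\geqslant 1$, so instead I would substitute the $p$-th component of the diagonalized system \eqref{lo6},
\begin{equation*}
w_{pt}=-\lambda_p(u^a)w_{px}+a^2 w_{pxx}-w_{ptt}-(AW)_p+b_p ,
\end{equation*}
into $\langle w_{pt},w_p g^2\rangle=\int_\mathbb{R} w_{pt}w_p g^2\,dx$ and estimate the five resulting pieces. For $a^2 w_{pxx}$, two integrations by parts in $x$ give the favourable $-8\gamma a^2\int w_{px}^2 g^2$ and a cross term which, by a suitably weighted Cauchy--Schwarz inequality and the boundedness of $g$, is at most $c_1\int_0^t\int w_p^2 g_x^2$ with $c_1<1$ together with $O(1)\int_0^t\|w_{px}\|^2 d\tau$. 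For $-\lambda_p(u^a)w_{px}$ one integrates by parts in $x$ to reach $\tfrac12\int w_p^2\big(\lambda_p(u^a)_x g^2+2\lambda_p(u^a)g g_x\big)$; here the \emph{linear degeneracy} of the $p$-th field together with \eqref{E1} is essential: writing $\nabla\lambda_p(u^a)u^p_x=\big[\nabla\lambda_p(u^a)-\nabla\lambda_p(u^p)\big]u^p_x+\nabla\lambda_p(u^p)r_p(u^p)\rho_x$, the last term vanishes and the first is $\leqslant O(1)|u^a-u^p||u^p_x|\leqslant O(1)e$, whence $|\lambda_p(u^a)_x|\leqslant O(1)\big(e+\sum_{k\neq p}|u^k_x|\big)$ and the dangerous self-interaction $|u^p_x|w_p^2$ never appears, while the remaining term $\int w_p^2\lambda_p(u^a)g g_x$ is handled using $|\lambda_p(u^a)|\leqslant O(1)|u^a-u^p|$, the exponential decay of $u^a-u^p$ inside $\Omega_p$ (Lemma \ref{leme}), the exponential smallness of $g_x$ outside $\Omega_p$, and the splitting $t\leqslant t_0$ versus $t>t_0$ as in \eqref{ew}. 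Finally $-(AW)_p$ and $b_p$ are estimated via \eqref{bij3} evaluated at $i=p$ (so the bare $|u^p_x|$ disappears) and the definition \eqref{D}, repeating the computations of Lemmas \ref{lemI1} and \ref{lemI2}: they yield the interaction terms $\sum_{j\neq i}|u^j_x|w_i^2$ and $\sum_{i\neq p}|u^i_x|w_i^2$, terms $|u^p_x|^2 w_p^2$ and $\delta^{-1/2}|u^p_x|^2 w_p^2$ which by $|u^p_x|^2\leqslant O(1)\delta^2 g_x^2$ become $\int w_p^2 g_x^2$ with coefficient $O(\delta^2)$ resp.\ $O(\delta^{3/2})$, the quantities $\|W_x\|^2+\|W_t\|^2$, time-integrable remainders $O(1)\varepsilon_0\delta(1+\tau)^{-5/4}$, and terms proportional to $e$.

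The remaining — and, I expect, hardest — piece is $-w_{ptt}$, the term inherited from the $\varepsilon u_{tt}$ in \eqref{eqvjx}, which must be shown to be genuinely higher order. Integrating by parts once in $\tau$ produces the boundary contributions $O(1)\big(\|w_p(t)\|^2+\|w_{pt}(t)\|^2+\|w_p(0)\|^2+\|w_{pt}(0)\|^2\big)$ — exactly the bracketed terms of the statement — together with $8\gamma\int_0^t\int w_{pt}^2 g^2\leqslant O(1)\int_0^t\|w_{pt}\|^2 d\tau$ and a term of size $O(1)\int_0^t\int w_{pt}w_p g g_{xx}$ which, using $g_t=g_{xx}/(4\gamma)$ from \eqref{gproperty} and the decay $\|g_{xx}(\cdot,\tau)\|^2=O(1)(1+\tau)^{-3/2}$ of the kernel, is at most $O(1)\int_0^t\|w_{pt}\|^2 d\tau+O(1)\varepsilon_0^2$; no $|u^p_x|^2 w_p^2$ or $|u^p_x|w_p^2$ term is generated. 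This is the verification the introduction calls highly nontrivial — a wave-equation term behaving, after these manipulations, like a parabolic one — and it is here that both the scaling identity $g_t=g_{xx}/(4\gamma)$ and the assumption of a single linearly degenerate family are used. Collecting everything: the $\int_0^t\int w_p^2 g_x^2$ terms reappearing on the right, whose total coefficient is $c_1+O(\delta^{3/2})<1$, are moved to the left; the remainders involving $e$ are summed as in \eqref{ew} and bounded by $O(1)\varepsilon_0\delta_0$; and all surviving terms — $\sum_{j\neq i}|u^j_x|w_i^2$, $\sum_{i\neq p}|u^i_x|w_i^2$, $\int_0^t(\|W_x\|^2+\|W_t\|^2)d\tau$, $\|w_p(t)\|^2$, $\|w_{pt}(t)\|^2$, $\|w_p(0)\|^2$, $\|w_{pt}(0)\|^2$ — reappear with the $O(1)\delta^2$ prefactor of the first step, which is the asserted estimate.
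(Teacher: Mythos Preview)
Your proposal is correct and follows essentially the same route as the paper: bound $|u^p_x|^2\leqslant O(1)\delta^2 g_x^2$ with $\gamma=1/(4a^2)$, apply Lemma~\ref{gh} with $h=w_p$, substitute the $p$-th equation of \eqref{lo6} into the pairing $\langle w_{pt},w_pg^2\rangle$, treat the diffusion, convection, $w_{ptt}$, $A$- and $B$-pieces exactly as you describe, and absorb the returning $\int g_x^2 w_p^2$ (the paper gets coefficient $\tfrac12+O(\delta^{3/2})$, matching your $c_1+O(\delta^{3/2})$). The only cosmetic difference is the cross term $\int w_p^2\,\lambda_p(u^a)\,gg_x$: the paper bounds it in one line by $O(1)\int e\,w_p^2$ via $\lambda_p(u^a)=\lambda_p(u^a)-\lambda_p(u^p)$, whereas you argue more explicitly by splitting $\Omega_p$ versus $\Omega_p^c$; both lead to the same $O(1)\varepsilon_0\delta_0^{1/2}$ contribution after \eqref{ew}.
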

\begin{proof}
If we take $\gamma = 1/(4 a^2)$ in \eqref{hkg}, direct calculation shows
\begin{equation}\label{upgxwp}
\int_0^t\int_\mathbb{R} |u^p_x|^2 w_p^2 dxd\tau
 \leqslant O(1)  \delta^2 \int_0^t\int_\mathbb{R} g_x^2 w_p^2 dxd\tau.
\end{equation}
One can get the following inequality by choosing $h=w_p$ in Lemma \ref{gh}
\begin{equation}\label{gg}
\begin{split}
&\int_0^t\int_\mathbb{R} g_x^2 w_p^2 dxd\tau\\
\leqslant& O(1)\bigg(
\|w_p(0)\|^2
+ \int_0^t \| w_{px}\|^2 d\tau
+ \int_0^t <w_{p\tau}, w_p g^2>_{H^{-1}\times H^1}d\tau
\bigg).
\end{split}
\end{equation}
Taking the $p$-th equation in $(\ref{lo6})$, one can get
\begin{equation}\label{los11}
\begin{split}
w_{pt} + \lambda_p(u^a) w_{px} - a^2 w_{pxx} + w_{ptt}
= - \sum_{j = 1}^n a_{pj}w_j
+ b_p,
\end{split}
\end{equation}
where $a_{pj}$ and $b_p$ has been defined in \eqref{B} and \eqref{D}.
Multiplying $(\ref{los11})$ by $w_p g^2$ and integrating over
$[0,t]\times \mathbb{R}$ to get
\begin{equation}\label{duiou}
\begin{split}
\int_0^t <w_{p\tau}, w_p g^2>_{H^{-1}\times H^1}d\tau
=& \int_0^t\int_\mathbb{R}
\big[ a^2 w_{pxx} -  \lambda_p(u^a) w_{px} - w_{p\tau\tau}\big] w_p g^2 dxd\tau\\
&+ \int_0^t\int_\mathbb{R} \Big(b_p - \sum_{j = 1}^n a_{pj}w_j\Big) w_p g^2 dxd\tau.
\end{split}
\end{equation}
Noticing \eqref{gproperty}, then
by the integration by parts and the Cauchy-Schwartz inequality, we can get
\begin{equation}\label{wpxx}
\begin{split}
a^2 \int_0^t\int_\mathbb{R} w_{pxx} w_p g^2 dxd\tau
=&- a^2  \int_0^t\int_\mathbb{R} \big( w_{px}^2 g^2 + 2 g g_x w_p w_{px}\big)dxd\tau\\
\leqslant&   \frac{1}{2}   \int_0^t \int_\mathbb{R} g_x^2 w_p^2 dxd\tau
+O(1) \int_0^t \|w_{px}\|^2d\tau.
\end{split}
\end{equation}
By the integration by parts, one can get
\begin{align*}
\int_0^t\int_\mathbb{R}\lambda_p(u^a) w_{px}w_p g^2 dxd\tau
=\int_0^t\int_\mathbb{R} \bigg[\frac{1}{2} \lambda_p(u^a)_x w_p^2 g^2
+ \lambda_p(u^a) w_p^2 g g_x \bigg]dxd\tau,
\end{align*}
because the $p$-th characteristic field is linearly degenerate and the $p$-th contact wave speed
is zero, then by using \eqref{E1}, we can get
\begin{equation*}
\begin{split}
\lambda_p(u^a)_x w_p^2 g^2
= \sum_{i=1}^n \nabla\lambda_p(u^a) u^i_x w_p^2 g^2
=& \sum_{i=1}^n \Big\{\nabla\lambda_p(u^i)+
\big[\nabla\lambda_p(u^a)-\nabla\lambda_p(u^i)\big]\Big\}
u^i_x w_p^2 g^2\\
\leqslant& O(1)\bigg(\sum_{i = 1 \atop i \neq p}^n |u^i_x| w_p^2 + e w_p^2\bigg),
\end{split}
\end{equation*}
and
\begin{equation*}
\lambda_p(u^a) w_p^2 g g_x = \big[\lambda_p(u^a)-\lambda_p(u^p)\big] w_p^2 g g_x
\leqslant O(1)e w_p^2.
\end{equation*}
So, we have
\begin{equation}\label{lambda}
\begin{split}
\int_0^t\int_\mathbb{R}\lambda_p(u^a) w_{px}w_p g^2 dxd\tau
\leqslant
O(1) \int_0^t \int_\mathbb{R}
\bigg(\sum_{i = 1 \atop i \neq p}^n |u^i_x| w_p^2
+ e w_p^2 \bigg)dxd\tau.
\end{split}
\end{equation}
Direct calculation yields
$$
w_{p\tau\tau} w_p g^2
= \big[ (w_p w_{p\tau})_\tau - w_{p\tau}^2\big] g^2
\leqslant (w_p w_{p\tau})_\tau g^2 + O(1) w_{p\tau}^2,
$$
we also have
\begin{equation*}
\begin{split}
\int_0^t (w_p w_{p\tau})_\tau g^2 d\tau
\leqslant& O(1)\big[(w_pw_{pt})(x,t)
+ (w_pw_{pt})(x,0)\big]
+ O(1)\int_0^t w_pw_{p\tau}gg_\tau d\tau,
\end{split}
\end{equation*}
because
\begin{equation*}
\begin{split}
\int_0^t w_pw_{p\tau}gg_\tau d\tau
\leqslant O(1)\varepsilon_0 \int_0^t \big( w_{p\tau}^2 + g_\tau^2 \big) d\tau
\leqslant O(1)\int_0^t w_{p\tau}^2 d\tau + O(1)\varepsilon_0 \delta^{1/2},
\end{split}
\end{equation*}
thus, we have
\begin{equation}\label{utt}
\begin{split}
& \int_0^t \int_\mathbb{R} w_{p\tau\tau} w_p g^2 dxd\tau\\
\leqslant& O(1) \bigg(\|w_p(t)\|^2 + \|w_{pt}(t)\|^2
+ \|w_p(0)\|^2 + \|w_{pt}(0)\|^2
+  \int_0^t \|w_{p\tau}\|^2 d\tau
+ \varepsilon_0 \delta^{1/2}\bigg).
\end{split}
\end{equation}
On the basis of \eqref{ew}, \eqref{gg} and \eqref{duiou}-\eqref{utt}, we obtain
\begin{equation}\label{gxwp}
\begin{split}
&\int_0^t\int_\mathbb{R} g_x^2 w_p^2 dxd\tau
\leqslant O(1) \int_0^t\int_\mathbb{R}
\sum_{i=1 \atop i \neq p }^n |u^i_x| w_p^2dxd\tau
+ O(1)\int_0^t (\|w_{px}\|^2 + \|w_{p\tau}\|^2)d\tau
\\
&\quad+O(1) \big(\|w_p( t)\|^2 + \|w_{pt}(t)\|^2
+ \|w_p(0)\|^2 + \|w_{pt}(0)\|^2 + \varepsilon_0 \delta^{{1}/{2}}\big)\\
&\quad
+ \int_0^t\int_\mathbb{R} \Big(b_p - \sum_{j = 1}^n a_{pj}w_j\Big) w_p g^2 dxd\tau .
\end{split}
\end{equation}
Due to
\begin{equation*}
\int_\mathbb{R} \Big(b_p - \sum_{j = 1}^n a_{pj}w_j\Big) w_p g^2 dx
\leqslant O(1) ({\rm{I}_1 + \rm{I}_2}).
\end{equation*}
So, the estimates about the two terms involving $a_{pj}$ and $b_p$ is exactly the
same as Lemma \ref{lemI1} and Lemma \ref{lemI2}, thus \eqref{gxwp} becomes
\begin{equation}\label{gxwpi1i2}
\begin{split}
&\int_0^t\int_\mathbb{R} g_x^2 w_p^2 dxd\tau
\leqslant O(1) \delta^{-1/2}\int_0^t\int_\mathbb{R}|u^p_x| w_p^2 dxd\tau
+ O(1)\int_0^t (\|W_{x}\|^2 + \|W_{\tau}\|^2)d\tau\\
&\quad
+ O(1) \int_0^t\int_\mathbb{R}\bigg(
\sum_{i,j=1 \atop j \neq i }^n |u^j_x| w_i^2
+\sum_{i=1 \atop i \neq p }^n |u^i_x| w_i^2\bigg)dxd\tau
+  O(1) \varepsilon_0 \delta_0^{{1}/{2}}\\
&\quad +O(1) \big(\|w_p( t)\|^2 + \|w_{pt}(t)\|^2
+ \|w_p(0)\|^2 + \|w_{pt}(0)\|^2\big).
\end{split}
\end{equation}
In view of \eqref{upgxwp} and \eqref{gxwpi1i2}, we finish the proof of this lemma.
\end{proof}
Finally, with the help of \eqref{loemissp} and Lemma $\ref{g}$,we obtain the
lower-order estimate, which is stated in the following proposition:
\begin{proposition}\label{lemloe}
Under the same assumptions as in Theorem $\ref{mainthm}$, it holds that
\begin{equation}\label{loefinal}
\begin{split}
&\|W(t)\|_1^2 + \|W_t(t)\|^2
+\int_0^t \int_\mathbb{R} \bigg(
\sum_{i,j = 1\atop j \neq i}^n |u^j_x| w_i^2
+\sum_{i=1 \atop i \neq p}^n |u^i_x| w_i^2
+ |u^p_x|^2w_p^2\bigg)dxd\tau\\
&\quad + \int_0^t(\|W_x\|^2 + \|W_\tau\|^2)d\tau
\leqslant O(1)\big( \|W(0)\|_1^2  + \|W_t(0)\|^2\big)
+  O(1) \varepsilon_0 \delta_0^{{1}/{2}},
\end{split}
\end{equation}
where $\varepsilon_0$ is the positive constant same as $(\ref{pa})$.
\end{proposition}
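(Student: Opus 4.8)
The plan is to combine the two partial estimates already in hand — Proposition \ref{proploestep22}, which gives \eqref{loemissp} with a single uncontrolled term $O(1)\delta^{-1/2}\int_0^t\int_\mathbb{R}|u^p_x|^2 w_p^2\,dxd\tau$, and Lemma \ref{g}, which bounds exactly that quantity — and then absorb all the resulting small terms back into the left-hand side. First I would multiply the estimate of Lemma \ref{g} by $O(1)\delta^{-1/2}$ so that it matches the form in which the offending term appears in \eqref{loemissp}; the key gain is that the leading coefficient on the right of Lemma \ref{g} is $O(1)\delta^2$, so after multiplication it becomes $O(1)\delta^{3/2}$, which is small. Thus $\delta^{-1/2}\int_0^t\int_\mathbb{R}|u^p_x|^2 w_p^2\,dxd\tau$ is bounded by $O(1)\delta^{3/2}$ times the ``good'' dissipation terms $\int_0^t\int_\mathbb{R}\big(\sum_{i,j,\,j\neq i}|u^j_x|w_i^2 + \sum_{i\neq p}|u^i_x|w_i^2\big)dxd\tau$, plus $O(1)\delta^{3/2}$ times $\int_0^t(\|W_x\|^2+\|W_\tau\|^2)d\tau$, plus $O(1)\delta^{3/2}$ times the pointwise-in-time norms $\|w_p(t)\|^2+\|w_{pt}(t)\|^2+\|w_p(0)\|^2+\|w_{pt}(0)\|^2$, plus the harmless $O(1)\varepsilon_0\delta_0^{1/2}$-type error.

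Next I would substitute this bound into \eqref{loemissp}. Every term produced carries a factor $O(1)\delta^{3/2}$ (or smaller), so for $\delta_0$ small enough each of them can be absorbed into the corresponding term on the left-hand side of \eqref{loemissp}: the dissipation integrals $\int_0^t\int_\mathbb{R}\big(\sum_{i,j,\,j\neq i}|u^j_x|w_i^2+\sum_{i\neq p}|u^i_x|w_i^2\big)dxd\tau$ and $\int_0^t(\|W_x\|^2+\|W_\tau\|^2)d\tau$ are exactly present on the left with fixed coefficient $\tfrac18$, and $\|w_p(t)\|^2+\|w_{pt}(t)\|^2 \leqslant \|W(t)\|_1^2+\|W_t(t)\|^2$ is also present on the left, while $\|w_p(0)\|^2+\|w_{pt}(0)\|^2 \leqslant \|W(0)\|_1^2+\|W_t(0)\|^2$ just joins the right-hand data term. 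After moving the absorbed pieces to the left and using $\delta \leqslant \delta_0 \ll 1$, one is left precisely with \eqref{loefinal}, where the $|u^p_x|^2 w_p^2$ dissipation term now also appears on the left because Lemma \ref{g} shows it is controlled by quantities that are themselves controlled.

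The one point that requires care — and the main (though modest) obstacle — is bookkeeping the absorption so that no term on the left is absorbed into itself with coefficient $\geqslant 1$: since the multiplied Lemma \ref{g} feeds back into $\int_0^t\int_\mathbb{R}|u^p_x|^2 w_p^2\,dxd\tau$ via the $\delta^{-1/2}$-weighted term in \eqref{loemissp}, and Lemma \ref{g}'s own right side contains no $|u^p_x|^2 w_p^2$ term, there is no circularity; but one must check that the chain ``$\delta^{-1/2}\times(\delta^2\times\text{good terms})$'' genuinely lands with total coefficient $\ll 1$ against the fixed $\tfrac18$-coefficients, which holds because $\delta^{3/2}\to 0$. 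I would also note that the term $O(1)\varepsilon_0\delta_0$ from Lemma \ref{g}, after multiplication by $\delta^{-1/2}$, becomes $O(1)\varepsilon_0\delta_0^{1/2}$, consistent with the error bound claimed in \eqref{loefinal}. Collecting everything yields the stated lower-order estimate.
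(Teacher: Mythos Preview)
Your proposal is correct and follows exactly the route the paper takes: the paper's own proof is the one-line remark ``with the help of \eqref{loemissp} and Lemma~\ref{g}'', and your write-up simply spells out the substitution-and-absorption that this entails, including the observation that Lemma~\ref{g}'s right-hand side carries a $\delta^{2}$ prefactor so that after multiplication by $\delta^{-1/2}$ every fed-back term has coefficient $O(1)\delta^{3/2}$ and is absorbable into the fixed $\tfrac18$-coefficients on the left of \eqref{loemissp}. One minor caution: the constant term $O(1)\varepsilon_0\delta_0$ in the statement of Lemma~\ref{g} is already a crude bound (its true size, from \eqref{upgxwp}--\eqref{gxwpi1i2}, is $O(1)\delta^{2}\varepsilon_0\delta_0^{1/2}$), so when you multiply by $\delta^{-1/2}$ you should appeal to that finer origin rather than the lemma's displayed form to land at $O(1)\varepsilon_0\delta_0^{1/2}$; otherwise your bookkeeping is sound.
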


\subsection{Higher-order estimates}\label{sechoe}
\quad ~
In order to obtain the uniform estimates and close the argument with the a
priori assumption $(\ref{pa})$, we need to estimate the higher order derivatives. For
this purpose, we apply spatial derivative in $(\ref{lo6})$ to get
\begin{equation}\label{ho1}
\begin{split}
(W_x)_t + [\Lambda(u^a) W_x]_x - a^2 (W_x)_{xx} + (W_x)_{tt}
=-\big(AW - B\big)_x,
\end{split}
\end{equation}
where $A$ and $B$ is defined in \eqref{B} and \eqref{D}.
Multiplying $(\ref{ho1})$ by $W_x^T$ and integrate on $\mathbb{R}$
with respect to $x$, by using the Cauchy-Schwartz inequality, we have
\begin{equation*}
\begin{split}
&\frac{1}{2}\frac{d}{dt} \int_\mathbb{R} \big(|W_x|^2 + 2W_{x} W_{xt} \big) dx
+ a^2 \|W_{xx}\|^2 - \|W_{xt}\|^2\\
=&  \int_\mathbb{R} W_{xx}^T \big[\Lambda(u^a) W_x + AW - B\big] dx\\
\leqslant& O(1) \delta^{-1/4} \big(\|W_x\|^2 + \|AW\|^2 + \|B\|^2\big)
+ O(1)\delta^{1/4} \|W_{xx}\|^2.
\end{split}
\end{equation*}
By Lemma \ref{lemmacontact}, Lemma \ref{lemshock}, \eqref{E1E2}, \eqref{E1}
and \eqref{Qinequ}, a direct calculation on $A$ and $B$ gives
\begin{equation}\label{absoluteB}
\begin{split}
|A|
\leqslant O(1) \big(|u^a_x|+|u^a_t|+|u^a_x|^2+|u^a_t|^2+|u^a_{xx}|+|u^a_{tt}|\big)
\leqslant O(1) \big(|u^a_x|+|u^p_{xx}|+|u^p_{tt}|\big),
\end{split}
\end{equation}
\begin{equation*}
\begin{split}
|B|
\leqslant
O(1) \big(|u^a_xW_x|+e+|u^p_{xt}|+|u^a_x W|^2+|W_x|^2+|u^a_tW_t| \big).
\end{split}
\end{equation*}
Here we also used $|u^a_t|\leqslant O(1) |u^a_x|$.
By the a priori assumption \eqref{pa}, we can get the fact
$\|W_x\|_{L^\infty}\leqslant O(1)\varepsilon_0$. Then,
using the above two inequalities, this fact,
we have
\begin{equation*}
\|AW\|^2 + \|B\|^2
\leqslant
O(1) \int_\mathbb{R} \big(|u^a_x W|^2 +|u^p_{xx}|^2+|u^p_{tt}|^2+ e^2 +|u^p_{xt}|^2 \big)dx
+O(1) \big(\|W_x\|^2 + \|W_t\|^2\big).
\end{equation*}
So far, the following
inequality is what we get
\begin{equation}\label{ttt}
\begin{split}
&\frac{1}{2}\frac{d}{dt} \int_\mathbb{R} \big(|W_x|^2 + 2W_{x} W_{xt} \big) dx
+ a^2\|W_{xx}\|^2 - \|W_{xt}\|^2
\leqslant O(1) \delta^{-1/4}  \big(\|W_x\|^2 + \|W_t\|^2\big)\\
&\quad+O(1) \delta^{-1/4}  \int_\mathbb{R} \big(|u^a_x W|^2 +|u^p_{xx}|^2+|u^p_{tt}|^2+ e^2 +|u^p_{xt}|^2\big)dx
+ O(1)\delta^{1/4} \|W_{xx}\|^2.
\end{split}
\end{equation}
Integrating \eqref{ttt} on $[0,t]$ with respect to the temporal variable, we have
\begin{equation}\label{ho3}
\begin{split}
&\frac{1}{2}\int_\mathbb{R}\big(|W_x|^2 +  2W_{x} W_{xt}\big)(x,t) dx
+ a^2\int_0^t \|W_{xx}\|^2d\tau - \int_0^t \|W_{x\tau}\|^2d\tau\\
\leqslant&  O(1) \delta^{-1/4} ( \|W(0)\|_1^2 + \|W_t(0)\|_1^2)
+  O(1) \delta_0^{{1}/{4}}+ O(1)\delta^{1/4}\int_0^t \|W_{xx}\|^2d\tau,
\end{split}
\end{equation}
where we have use Proposition \ref{lemloe}, \eqref{ew} and the following inequality
\begin{equation*}
\begin{split}
\int_0^t  \int_\mathbb{R} \big(|u^p_{xx}|^2+|u^p_{\tau\tau}|^2+|u^p_{x\tau}|^2 \big)dxd\tau
\leqslant& O(1)\delta^{1/2}.
\end{split}
\end{equation*}

To control the negative term in $(\ref{ho3})$, multiplying $(\ref{ho1})$ by $2 W_{xt}^T$ and integrate on $\mathbb{R}$ with respect to $x$, one can get
\begin{equation}\label{negative}
\begin{split}
&\frac{d}{dt}\int_\mathbb{R} \big(a^2 |W_{xx}|^2 + |W_{xt}|^2 \big)dx
+  2\int_\mathbb{R}\big[|W_{xt}|^2 +  W_{xt}^T \Lambda(u^a) W_{xx} \big]dx\\
=& 2 \int_\mathbb{R}  W_{xt}^T (-AW + B)_x dx
- 2 \int_\mathbb{R}W_{xt}^T \Lambda(u^a)_x W_x dx\\
\leqslant& O(1) \delta^{-1/4} \big(\|A_xW\|^2 +\|B_x\|^2+\|W_x\|^2 \big)
+ O(1) \delta^{1/4} \|W_{xt}\|^2.
\end{split}
\end{equation}
Here we have used \eqref{absoluteB}. Since, by using the a priori assumption, Lemma \ref{lemmacontact} and Lemma \ref{lemshock}, we also have
\begin{equation*}
|A_x|\leqslant O(1) \big(|u^a_x| + |u^p_{xx}| + |u^p_{tt}|
+ |u^p_{xt}| + |u^p_{xxx}| + |u^p_{xtt}|\big),
\end{equation*}
\begin{equation*}
|B_x|\leqslant O(1) \big[|W_x| + |W_t| + e + |u^p_{xt}| + |u^p_{xxt}|
+ |u^a_xW|^2   + (\delta+\varepsilon_0)(|W_{xx}|+ |W_{xt}|)\big].
\end{equation*}
Thus, integrating \eqref{negative} over $[0,t]$ with respect to the temporal variable,
in view of Proposition \ref{lemloe}, we can similarly get
\begin{equation*}\label{ho4}
\begin{split}
& \int_\mathbb{R} \big(a^2 |W_{xx}|^2 + |W_{xt}|^2\big)(x,t)dx
+2\int_0^t\int_\mathbb{R} \big[|W_{x\tau}|^2
+ W_{x\tau}^T\Lambda(u^a)W_{xx}\big]dxd\tau\\
\leqslant&  O(1)\delta^{-1/4}\big( \|W(0)\|_2^2 + \|W_t(0)\|_1^2\big)
+ O(1) \delta_0^{{1}/{4}}
+O(1) (\delta^{1/4}+\varepsilon_0)\int_0^t\big(\|W_{xx}\|^2+\|W_{x\tau}\|^2\big)d\tau.
\end{split}
\end{equation*}
Adding the last inequality to \eqref{ho3}, one can get
\begin{equation*}
\begin{split}
&\frac{1}{2}\int_\mathbb{R}\Big[\big(|W_x|^2 +  2W_{x} W_{xt}+ 2|W_{xt}|^2\big)
+ 2 a^2 |W_{xx}|^2 \Big](x,t) dx\\
&\quad +\int_0^t\int_\mathbb{R} \big[|W_{x\tau}|^2
+ 2W_{x\tau}^T\Lambda(u^a)W_{xx}+ a^2|W_{xx}|^2\big]dxd\tau\\
\leqslant&  O(1)\delta^{-1/4}( \|W(0)\|_1^2 + \|W_t(0)\|_1^2)
+  O(1) \delta_0^{{1}/{4}}\\
&\quad+O(1) (\delta^{1/4}+\varepsilon_0)\int_0^t\big(\|W_{xx}\|^2+\|W_{x\tau}\|^2\big)d\tau.
\end{split}
\end{equation*}
By using the Cauchy-Schwartz inequality and the sub-characteristic condition \eqref{sc},
we can get the following two inequalities
$$
|W_x|^2 +  2W_{x} W_{xt}+ 2|W_{xt}|^2 \geqslant O(1)\big(|W_x|^2 + |W_{xt}|^2 \big),
$$
$$
|W_{x\tau}|^2+ 2W_{x\tau}^T\Lambda(u^a)W_{xx}+ a^2|W_{xx}|^2
\geqslant O(1)\big(|W_{xx}|^2 + |W_{x\tau}|^2 \big).
$$
Thus, we obtain the second order derivative estimate which is stated in the
following proposition
\begin{proposition}
Under the same assumptions as in Theorem $\ref{mainthm}$, it holds that
\begin{equation}\label{ho5}
\begin{split}
&\|W_x(t)\|_1^2 + \|W_{xt}(t)\|^2
+ \int_0^t \big(\|W_{xx}\|^2 + \|W_{x\tau}\|^2\big) d\tau\\
 \leqslant&  O(1)\delta^{-1/4}\big( \|W(0)\|_2^2 + \|W_t(0)\|_1^2\big)
+  O(1) \delta_0^{{1}/{4}}.
\end{split}
\end{equation}
Here $\delta_0$ is a small positive constant same as \eqref{ii1}.
\end{proposition}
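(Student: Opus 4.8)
The plan is to prove \eqref{ho5} by a two-step energy argument applied to the once-$x$-differentiated diagonalized system, mirroring the structure of the lower-order estimate of Section~\ref{secloe} but now \emph{without} weights, since Proposition~\ref{lemloe} already supplies all the lower-order dissipation that will be needed as a source of smallness.

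First I would differentiate \eqref{lo6} in $x$ to obtain \eqref{ho1}, multiply by $W_x^{T}$ and integrate over $\mathbb{R}$. The left side produces $\frac{1}{2}\frac{d}{dt}\int_\mathbb{R}(|W_x|^2+2W_xW_{xt})\,dx+a^2\|W_{xx}\|^2-\|W_{xt}\|^2$ (up to the commutator from $[\Lambda(u^a)W_x]_x$, which is absorbed), and the right side, after Cauchy--Schwarz with a weight $\delta^{\pm1/4}$, is bounded by $O(1)\delta^{-1/4}(\|W_x\|^2+\|AW\|^2+\|B\|^2)$ plus an $O(\delta^{1/4})\|W_{xx}\|^2$ that the viscous term swallows. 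The crucial inputs are the pointwise bounds $|A|\leq O(1)(|u^a_x|+|u^p_{xx}|+|u^p_{tt}|)$ and $|B|\leq O(1)(|u^a_xW_x|+e+|u^p_{xt}|+|u^a_xW|^2+|W_x|^2+|u^a_tW_t|)$, which follow from \eqref{B}, \eqref{D}, Lemmas~\ref{lemmacontact}--\ref{lemshock}, \eqref{E1}, \eqref{Qinequ} and $|u^a_t|\leq O(1)|u^a_x|$; combined with the a priori bound $\|W_x\|_{L^\infty}\leq O(1)\varepsilon_0$ these give $\|AW\|^2+\|B\|^2\leq O(1)\int_\mathbb{R}(|u^a_xW|^2+|u^p_{xx}|^2+|u^p_{tt}|^2+e^2+|u^p_{xt}|^2)\,dx+O(1)(\|W_x\|^2+\|W_t\|^2)$. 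Integrating in $t$: the contact-wave pieces square-integrate in $(x,t)$ to $O(\delta^{1/2})$ by the Gaussian decay in Lemma~\ref{lemmacontact}, the $e^2$ term is handled as in \eqref{ew}, and $\int_0^t\!\int_\mathbb{R}|u^a_xW|^2\,dxd\tau$ together with $\int_0^t(\|W_x\|^2+\|W_t\|^2)\,d\tau$ are absorbed via \eqref{loefinal} (using $|u^a_x|^2\leq O(1)(\delta\sum_{k\neq p}|u^k_x|+|u^p_x|^2)$ to match them against the dissipation already controlled there). This yields \eqref{ho3}.

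Next I would close the spurious $-\|W_{xt}\|^2$ by multiplying \eqref{ho1} instead by $2W_{xt}^{T}$ and integrating, which gives \eqref{negative}: on the left $\frac{d}{dt}\int_\mathbb{R}(a^2|W_{xx}|^2+|W_{xt}|^2)\,dx+2\int_\mathbb{R}(|W_{xt}|^2+W_{xt}^{T}\Lambda(u^a)W_{xx})\,dx$, and on the right $2\int_\mathbb{R}W_{xt}^{T}(-AW+B)_x\,dx-2\int_\mathbb{R}W_{xt}^{T}\Lambda(u^a)_xW_x\,dx$, once more split by Cauchy--Schwarz with $\delta^{\pm1/4}$. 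Here one needs $|A_x|\leq O(1)(|u^a_x|+|u^p_{xx}|+|u^p_{tt}|+|u^p_{xt}|+|u^p_{xxx}|+|u^p_{xtt}|)$ and $|B_x|\leq O(1)(|W_x|+|W_t|+e+|u^p_{xt}|+|u^p_{xxt}|+|u^a_xW|^2+(\delta+\varepsilon_0)(|W_{xx}|+|W_{xt}|))$; all the contact-wave derivatives appearing square-integrate to $O(\delta^{1/2})$ by Lemma~\ref{lemmacontact}, the $(\delta+\varepsilon_0)(|W_{xx}|+|W_{xt}|)$ contributions move to the left, and the remaining lower-order time integrals are bounded by Proposition~\ref{lemloe}.

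Finally I would add the two resulting inequalities and invoke two coercivity facts: $|W_x|^2+2W_xW_{xt}+2|W_{xt}|^2\geq O(1)(|W_x|^2+|W_{xt}|^2)$ by Cauchy--Schwarz, and --- this is where the sub-characteristic condition \eqref{scc} is essential --- $|W_{x\tau}|^2+2W_{x\tau}^{T}\Lambda(u^a)W_{xx}+a^2|W_{xx}|^2\geq O(1)(|W_{xx}|^2+|W_{x\tau}|^2)$, which holds componentwise because $\lambda_i(u^a)^2<a^2$ makes each quadratic form positive definite. Absorbing the leftover $O(\delta^{1/4}+\varepsilon_0)\int_0^t(\|W_{xx}\|^2+\|W_{x\tau}\|^2)\,d\tau$ into the left, for $\delta_0$ (hence $\varepsilon_0$) small, then produces \eqref{ho5}. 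The main obstacle I anticipate is purely the bookkeeping of the $\delta^{-1/4}$ losses from Cauchy--Schwarz: every such loss must be matched either by the $O(\delta^{1/2})$ Gaussian decay of a contact-wave derivative or by the smallness already bought in Proposition~\ref{lemloe}. In particular one must check that the genuinely second-order contact-wave derivatives $u^p_{xxx}$, $u^p_{xtt}$, $u^p_{xxt}$ decay fast enough --- which they do, with $L^2_{x,t}$ norms of order $O(\delta)$, because their time decay ($\sim(1+t)^{-2}$ or faster) survives the extra $(1+t)^{1/2}$ picked up from integrating the spatial Gaussian.
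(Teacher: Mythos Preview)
Your proposal is correct and follows essentially the same approach as the paper: differentiate \eqref{lo6} in $x$, multiply by $W_x^T$ and then by $2W_{xt}^T$, use the same pointwise bounds on $|A|$, $|B|$, $|A_x|$, $|B_x|$ together with Cauchy--Schwarz weighted by $\delta^{\pm 1/4}$, feed the lower-order terms into Proposition~\ref{lemloe}, and close via the coercivity from the sub-characteristic condition \eqref{scc}. The bookkeeping you flag (matching $\delta^{-1/4}$ losses against Gaussian decay of contact-wave derivatives and against the dissipation already controlled in \eqref{loefinal}) is exactly what the paper does, and your treatment of $\int_0^t\!\int|u^a_xW|^2$ via the decomposition $|u^a_x|^2\leq O(1)(\delta\sum_{k\neq p}|u^k_x|+|u^p_x|^2)$ is in fact slightly more explicit than the paper's.
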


Based on the derivative in the system $(\ref{ho1})$ with respect to $x$ and do the above estimate again through multiplying the derivative equation by $W_{xx}^T$ and $ 2W_{xxt}^T$, we can get the third order derivative estimate as follows
\begin{equation}\label{ho6}
\begin{split}
&\|W_{xx}(t)\|_1^2  + \|W_{xxt}( t)\|^2
+ \int_0^t \big(\|W_{xxx}\|^2 + \|W_{xx\tau}\|^2\big) d\tau\\
\leqslant&  O(1)\delta^{-1/4}\big( \|W(0)\|_3^2 + \|W_t(0)\|_2^2\big)
+  O(1) \delta_0^{{1}/{4}}.
\end{split}
\end{equation}

Combining the lower order estimate $(\ref{loefinal})$ and the higher order estimate $(\ref{ho5})$
and $(\ref{ho6})$, we obtain the following desired energy estimate
\begin{align*}
&\|W(t)\|_3^2 + \|W_t(t)\|_2^2
+ \int_0^t  \big(\|W_x\|_2^2 + \|W_\tau\|_2^2\big) d\tau\\
 \leqslant&  O(1)\delta^{-1/4}\big( \|W(0)\|_3^2 + \|W_t(0)\|_2^2
\big)
+  C_3 \delta_0^{{1}/{4}}.
\end{align*}
Since $\Phi(x,t) = R(u^a) W(x,t)$, we have
\begin{equation}\label{de}
\begin{split}
&\|\Phi( t)\|_3^2 + \|\Phi_t( t)\|_2^2
+ \int_0^t  (\|\Phi_x\|_2^2 + \|\Phi_\tau\|_2^2) d\tau\\
 \leqslant&  O(1)\delta^{-1/4}( \|\Phi(0)\|_3^2 + \|\Phi_t(0)\|_2^2)
+  C_3 \delta_0^{{1}/{4}}.
\end{split}
\end{equation}

\subsection{Proof of the main theorem}\label{secmainthm}
\quad ~
The a priori assumption \eqref{pa}will be closed if we choose $\delta_0$ so small in
\eqref{de} such that
\begin{equation*}
2C_3 \delta_0^{{1}/{4}} \leqslant \varepsilon_0.
\end{equation*}
It should be noted that the smallness assumptions \eqref{ii1} imply the
smallness of $\delta$ and $\varepsilon_0$.

Now that the a priori assumption \eqref{pa} has been proved,
combining this with the local existence theorem,
we can prove that the initial value problem
$(\ref{lp1.1-1})$ with \eqref{lp1.1-3} has a global solution
satisfying
\begin{equation*}
u(x,t)\in C([0, +\infty); H^2) \cap L^2(0, +\infty; H^3),
\end{equation*}
\begin{equation*}
v(x,t)\in C([0, +\infty); H^1) \cap L^2(0, +\infty; H^2),
\end{equation*}
for small $\delta_0$, by a standard continuation argument.
Furthermore, $(\ref{de})$ imply that
\begin{equation}
\|(u-u^a, v-v^a)(\cdot, t)\|_{L^\infty}
= \|(\Phi_x, \Phi_t)(\cdot, t)\|_{L^\infty}
\rightarrow 0, \quad \mbox{as} ~ t \rightarrow +\infty.
\end{equation}
Thus, Theorem \ref{mainthm} has been proved.

\noindent {\bf Acknowledgements:}
The author is grateful to Professor Tao Luo for his advices and suggestions.
The author wishes to sincerely thank Professor Huihui Zeng for many very helpful discussions,
sharing with me her insights and guidances.
This research was partially supported by NSFC grant \#11301293/A010801.

%%reference

\end{document}